\documentclass{conm-p-l}

\usepackage{latexsym}
\usepackage{amsmath}
\usepackage{amsfonts}
\usepackage{amssymb}
\usepackage{graphicx}
\usepackage{xcolor}

\usepackage[T1]{fontenc}
\usepackage[utf8]{inputenc}


\newtheorem{theorem}{Theorem}[section]

\newtheorem{proposition}[theorem]{Proposition}

\theoremstyle{definition}
\newtheorem{definition}[theorem]{Definition}
\newtheorem{example}[theorem]{Example}

\theoremstyle{remark}
\newtheorem{remark}[theorem]{Remark}

\numberwithin{equation}{section}

\newcommand{\rr}{\ensuremath{\mathbb{R}}}
\newcommand{\hh}{\ensuremath{\mathbb{H}}}
\newcommand{\sS}{\ensuremath{\mathbb{S}}}
\newcommand{\zz}{\ensuremath{\mathbb{Z}}}
\newcommand{\cc}{\ensuremath{\mathbb{C}}}
\newcommand{\nn}{\ensuremath{\mathbb{N}}}

\newcommand\re{\mathrm{Re}}
\newcommand\im{\mathrm{Im}}

\newcommand{\gen}{\mathrm{gen}\!\!}
\newcommand{\sing}{\mathrm{sing}}
\newcommand{\fp}{\mathop{\mathrm{fp}}}
\newcommand{\res}{\mathop{\mathrm{res}}}

\newcommand{\ii}{{\rm i}}
\newcommand{\sgn}{{\rm sgn}}
\newcommand{\e}{{\rm e}}
\newcommand{\dd}{{\rm d}}
\DeclareMathOperator{\sinc}{sinc}     
\DeclareMathOperator{\sinhc}{sinhc}     

\newcommand\LHS{\mathrm{LHS}}
\newcommand\RHS{\mathrm{RHS}}
\newcommand{\cC}{{\mathcal C}}
\newcommand{\cB}{{\mathcal B}}
\newcommand{\cG}{{\mathcal G}}
\newcommand{\cW}{{\mathcal W}}

\newcommand{\hospital}{de l'H\^opital}

\def\bbbone{{\mathchoice {\rm 1\mskip-4mu l} {\rm 1\mskip-4mu l}
{\rm 1\mskip-4.5mu l} {\rm 1\mskip-5mu l}}}
\def\one{\bbbone}

\begin{document}

\title[Generalized integrals of Macdonald and Gegenbauer functions]{Generalized integrals\\ of Macdonald and Gegenbauer functions}


\author{Jan Dereziński}
\address{Department of Mathematical Methods in Physics, Faculty of Physics, 
University of Warsaw, Pasteura 5, 02-093 Warszawa, Poland}
\curraddr{}
\email{jan.derezinski@fuw.edu.pl}
\thanks{The work of J.D. and C.G.
 was supported by the National Science Center of Poland under the
    grant UMO-2019/35/B/ST1/01651. J.D. would like to thank
    Howard Cohl for an inspiring discussion at an early stage of this
    work. We thank Micha{\l} Wrochna for pointing out 
    the references \cite{Gelfand64,Lesch97}.}

\author{Christian Gaß}
\address{Department of Mathematical Methods in Physics, Faculty of Physics, 
University of Warsaw, Pasteura 5, 02-093 Warszawa, Poland}
\curraddr{}
\email{christian.gass@fuw.edu.pl}
\thanks{}


\author{Błażej Ruba}
\address{Department of Mathematics, University of Copenhagen, Universitetsparken 5, DK-2100 Copenhagen Ø, Denmark}
\curraddr{}
\email{btr@math.ku.dk}
\thanks{}

\subjclass[2010]{33C05, 33C10, 47A52}

\date{\today}

\begin{abstract}
We compute bilinear integrals involving Macdonald and 
Gegenbauer functions. These integrals are convergent only for a limited 
range of parameters. However, when one uses {\em generalized
  integrals}, they can be computed essentially without restricting the
parameters.
The generalized integral is a~linear functional extending  
the standard integral to a certain class of functions
involving finitely many homogeneous non-integrable terms at the endpoints of the interval. 
For generic values of parameters, generalized bilinear integrals
of Macdonald and Gegenbauer functions can be obtained by  analytic
continuation from the region in which the integrals are convergent. In
the case of integer parameters we obtain expressions with explicit additional terms related to an \textit{anomaly}, namely the failure of the generalized integral to be scaling invariant.
\end{abstract}

\maketitle


\section{Introduction}
Consider a {\em Sturm-Liouville operator}
\begin{align} 
\cC:=-\rho(r)^{-1}\big(\partial_r p(r)\partial_r+q(r)\big)
\end{align} 
acting on functions on an interval $]a,b[$. $\cC$ is formally
symmetric for the bilinear scalar product with the {\em density} $\rho$:
\begin{align}\langle f|g\rangle:=\int_a^b f(r)g(r) \rho(r)\dd r.
\end{align}
Let $f$ be an eigenfunction of $\cC$, that is, $\cC f= E  f$.
In important applications one needs to know the value of the scalar product of $f$
with itself:
\begin{align}\label{square}
\langle f|f\rangle=\int_a^b f(r)^2\rho(r)\dd r.\end{align}
There exists a simple method, which often allows us to evaluate
\eqref{square}.
If $f_i$ are eigenfunctions corresponding to two different eigenvalues $E_i$, then
\begin{align}\label{square1}
\int_a^b f_1(r)f_2(r)\rho(r)\dd r&=\frac{\cW(f_1,f_2,b)-\cW(f_1,f_2,a)}{ E _1- E _2} , \\ \textup{where}\quad \cW(f_1,f_2,r):=& p(r)\big(f_1(r)f_2'(r)  -f_1'(r)f_2(r)\big).\end{align}
\eqref{square1} is sometimes called
{\em Green's identity}, or the {\em integrated  Lagrange identity}.
The rhs of \eqref{square1} can often be easily evaluated. Typically
this is possible if the endpoints $a,b$ are singular points of the
corresponding differential equation. We~will then say that $[a,b]$ is
a {\em natural interval} for the operator $\cC$.
\eqref{square1} is undefined for $f=f_1=f_2$, but one can often use a limiting procedure to derive
\eqref{square} from \eqref{square1}.
  We will call integrals of the form \eqref{square} or \eqref{square1} {\em bilinear integrals}.

In this paper we consider two families of Sturm-Liouville operators: the {\em Bessel operator} and the {\em Gegenbauer operator}:
\begin{align}
\cB_\alpha & :=-\partial_r^2-\frac{1}{r}\partial_r+\frac{\alpha^2}{r^2},
\label{bessel.} \\
\cG_{\alpha} & := - (1-w^2)\partial_w^2+2(1+\alpha )w\partial_w.
\label{gege0.}
\end{align}
$\cB_\alpha$ and $\cG_\alpha$ are very common in applications, e.g.\ in mathematical physics. Separation of variables in the Laplace equation on $\rr^d$ leads to the Bessel operator with $\alpha=\frac{d-2}{2}$, while separation of variables in the Laplace equation on the sphere $\sS^d$ and on the hyperbolic space $\hh^d$ lead to the Gegenbauer operator.

The  well-known {\em modified Bessel equation} is the eigenequation of
\eqref{bessel.} with the eigenvalue $-1$. The even better-known {\em (standard) Bessel
equation} is its eigenequation for the eigenvalue $1$.
$[0,\infty[$ is a  natural interval for $\cB_\alpha$, and the density
is $\rho(r)=2r$.  {\em Macdonald functions} are 
eigenfunctions of $\cB_\alpha$ decaying fast at infinity.

The {\em Gegenbauer equation} is the eigenequation of
\eqref{gege0.} with an eigenvalue that we parametrize as
$\lambda ^2-\big(\alpha +\frac{1}{2}\big)^2$. The natural intervals for
$\cG_\alpha$ are $[-1,1]$ with $\rho(w)=(1-w^2)^\alpha$, and
$[1,\infty[$ with $\rho(w)=(w^2-1)^\alpha$.

The main goal of our paper is to describe bilinear integrals involving  Macdonald and Gegenbauer functions.  These integrals are convergent only for a limited range of parameters. Therefore, we introduce a concept of the {\em generalized integral}, which allows us to compute \eqref{square} and \eqref{square1} for essentially all parameters.

The generalized integral is a linear functional on a certain class of
functions on the halfline. This class consists
 of functions integrable on $[1, \infty[$ whose restriction to $[0,1]$ is a linear combination of an integrable function and homogeneous functions. On integrable functions, the generalized and the usual (Lebesgue)
integral coincide.
The generalized integral of non-integrable homogeneous terms is defined as follows:
\begin{equation}
    \gen \int_0^1 r^\lambda \dd r = \begin{cases}
        \frac{1}{\lambda+1} & \text{for } \lambda \neq -1, \\
        0 & \text{for } \lambda = -1.
    \end{cases}
\end{equation}
Note that the case $\lambda=-1$ is troublesome.

We say that the
generalized integral is {\em anomalous} if at least one of coefficients at  negative
integer powers of $r$ is nonzero. In the anomalous case the generalized integral has more complicated properties. This case often appears in applications and is especially interesting.

The behavior of Macdonald and Gegenbauer functions near the endpoints 
of integration crucially depends on the the parameter $\alpha$. This 
parameter determines the index of the singular point of the Bessel/Gegenbauer 
equation. The bilinear integrals of Macdonald and Gegenbauer functions are 
convergent in the standard sense if and only if $|\re(\alpha)|<1$. For all other $\alpha$ 
they are well-defined only in the generalized sense. For
  $\alpha\in\zz\backslash\{0\}$, these integrals are anomalous.

Let us describe the main application of bilinear integrals
\eqref{square}
of Macdonald and Gegenbauer functions that we have in mind 
\cite{DeGaRu}.
Consider a Schr\"odinger operator  on $\rr^d$, $\sS^d$ or $\hh^d$
with a potential confined to a very small region.
One often approximates its Green function (that is, the integral
kernel of its resolvent) by an expression derived by assuming that the
potential is supported at a point. This expression involves a bilinear
integral of Macdonald functions for $\rr^d$ and of Gegenbauer
functions for $\hh^d$ and $\mathbb{S}^d$. This is a usual convergent
integral in dimensions $d=1,2,3$, which corresponds to
$\alpha=-\frac12,0,\frac12$.
In these dimensions one obtains a 1-parameter family of self-adjoint realizations of the
Laplacian perturbed by a point-like potential. The situation is
different in dimensions $d\geq4$. The usual bilinear integral is
divergent and   point interactions do not lead to self-adjoint
realizations of the Laplacian.

One can show \cite{DeGaRu} that  in all dimensions
  Green functions of the Laplacian with
 perturbations supported in a small region are asymptotic to certain {\em
  renormalized Green functions}, which depend on a finite number of parameters. If $d\geq4$, except for the trivial (unperturbed) one,
these renormalized Green functions are not integral kernels of bounded
operators---hence the Laplacian perturbed by a point potential cannot
be interpreted as a self-adjoint operator. The parameters of
nontrivial renormalized
Green functions form naturally an affine space. The generalized integral leads
to a distinuished renormalized Green function---it provides
a reference point in the affine space of these parameters.

Note that  there is a considerable difference
between even and odd dimensions. In even dimension  bilinear
integrals are anomalous, while in odd dimensions they are not.
This corresponds to logarithmic terms in even dimensions, which are
absent in odd dimensions.

The case $d=4$, corresponding to $\alpha=1$, is of particular interest. 
It is the borderline
case: the bilinear integrals are well defined in the usual sense for
smaller $\alpha$, but for $\alpha=1$ one has to use their generalized version.
Besides, they are anomalous.

The above analysis is closely related to  renormalization
in Quantum Field Theory, especially based on the dimensional regularization.
After the Wick rotation, the Minkowski space becomes
the Euclidean space $\rr^4$, and the d'Alembertian becomes the
Laplacian. 
As described in  \cite{BG96} ,
the configuration space version of dimensional regularization 
in Quantum Field Theory can be carried out by allowing the 
order $\alpha$ of Bessel functions describing 
the propagators to be complex, so that certain integrals are 
well-defined in the usual sense.
Then one has to subtract the 
terms which diverge if $\alpha$ approaches the physical value 
and take the limit. Thus renormalization in Quantum Field Theory can
be interpreted as an application of  the (anomalous) generalized integral.


\subsection{Remarks about literature}

The generalized integral  is closely related to the 
standard theory of extensions of {\em homogeneous distributions}, as described 
by H\"ormander \cite{Hoermander90}, and earlier by Hadamard 
\cite{Hadamard23,Hadamard32}, 
 Riesz \cite{Riesz}, and Gelfand 
\cite{Gelfand64}.

Various kinds of  generalized integrals appear in
the literature. 
One can divide them into two kinds: those involving
non-integrability at a finite point and those for which the problem
comes
from $\infty$. In our paper we use only the former setting: we assume
that the integrand is integrable close to infinity, however close to
the finite endpoint at the left hand side of the interval it may
involve non-integrable homogeneous terms.  In this setting definitions 
equivalent to our generalized integrable were given by Hadamard 
\cite{Hadamard23,Hadamard32} and Riesz \cite{Riesz}. A recent exposition 
of this topic can be found in a book by Paycha \cite{Paycha}, Chapter 1.

A very general discussion of the generalized integral is contained
in \cite{Lesch97}. The generalized integral is defined there
in terms of the Mellin transform, and one can account for 
sums of almost homogeneous singularities at 0 and $\infty$ 
simultaneously.

Let us mention that various forms of the generalized integrals where
the integrand is not integrable near $\infty$ are probably more common in the literature. Starting from Chapter 2, the book \cite{Paycha} is devoted 
mostly to this setting.
The noncommutative version of the generalized integral for polyhomogeneous pseudodifferential operators in the non-anomalous case is sometimes called the {\em Vishik-Kontsevich trace}. In the anomalous case it is related to the {\em Wodzicki residue}.

The Gegenbauer equation is essentially a special case of the hypergeometric
equation, where we put the finite singular points at $-1,1$ and we
assume the symmetry with respect to $w\to -w$.
Its particular solutions for special values of parameters are the
well-known {\em Gegenbauer polynomials}. The Gegenbauer equation is
equivalent through a simple transformation to the {\em associated Legendre
equation}. In the literature, {\em associated Legendre functions} are much
more common than Gegenbauer functions. However, the use of Gegenbauer 
functions with the conventions that we introduce, rather than associated 
Legendre functions, leads to a simplification of various  identities, and 
therefore seems to be preferable.

The application of bilinear integrals
to 
 Green's functions of the Laplacian  
 with point interactions
 will be described in a separate paper
\cite{DeGaRu}.  It is a classic result that Green's functions
without point interactions
can be expressed
 in terms of Macdonald functions for $\rr^d$ and Gegenbauer (or associated
 Legendre) 
 functions for $\hh^d$ and $\mathbb{S}^d$ (cf. e.g. \cite{CDT,Szmytkowski07}).
 Green's functions with point potentials
 in dimensions 1,2,3, at least in the flat case, are also well-known
 \cite{AGHH,AK,BF,LSSY,RSII}.
Green's functions with point potentials in all dimensions, and also
for $\sS^d$ and $\hh^d$, will be described in our subsequent paper
 \cite{DeGaRu}.

\subsection{Outline of the paper}
The main goal 
of the present article is to  pedagogically describe
elements of the theory the generalized integral and of Bessel and Gegenbauer equation needed in 
 \cite{DeGaRu}.

In Section \ref{sec:gen_ints}, we define the generalized integral and
study its properties. We describe its behavior
under a change of variables. We describe a method
of computing
generalized integrals which resembles the {\em dimensional
  regularization} in Quantum Field Theory, and actually can be traced
back to an earlier work of M.Riesz.

In Section \ref{sec:bessel} we recall elements of the theory of the
Bessel equation.
The main new result is the computation of generalized  bilinear
integrals of Macdonald functions using the definitions and methods of Section \ref{sec:gen_ints}.

Section \ref{sec:gegenbauer} is devoted
to the Gegenbauer  equation. We introduce two kinds of Gegenbauer
functions: ${\bf S}_{\alpha,\lambda}$ with a simple behavior near $1$
and ${\bf Z}_{\alpha,\lambda}$ with a simple behavior near $+\infty$. We compute
bilinear integrals of
${\bf S}_{\alpha,\lambda}$
and ${\bf Z}_{\alpha,\lambda}$,
both usual and generalized.
We discuss various kinds of integral representations of Gegenbauer
functions. These representations are then used to show that
Gegenbauer functions are asymptotic to 
Macdonald functions. We also prove that, if we choose the variables correctly, then 
 bilinear integrals of Gegenbauer functions are asymptotic to bilinear
 integrals of
Macdonald functions.

For obvious reasons, Section \ref{sec:gegenbauer} about the Gegenbauer
equation is more complex than Section \ref{sec:bessel}. However, both
sections have to a  large extent
a parallel structure.

This paper has two appendices. 
In Appendix \ref{app:Gamma}, we list several useful properties of  special 
functions related to the Gamma function, which are needed in the main
part of the paper. Appendix  
\ref{app:assoc_legendre} contains an overview of conventions for
functions related 
to Gegenbauer functions.

\section{Generalized integral}
\label{sec:gen_ints}

\subsection{Definition  of the generalized integral}
 In this section we introduce the {\em generalized
  integral}, which extends the usual integral to a certain class of not integrable functions. We restrict ourselves to functions which fail to be integrable near the (finite) left endpoint of the integration interval.
 Our definition  essentially coincides with similar concepts introduced
in \cite{Paycha}, Chapter 1, and goes back to the works of Riesz and Hadamard.

One should mention that it is also natural to define the generalized
integral for functions not integrable close to
$\infty$.  This  is described e.g. in \cite{Paycha} starting from
Chapter 2, and is actually more common in the literature. In our
work this will not be considered.

\begin{definition}
\label{def:genInt}
Let $a\in\rr$. We say that a function 
$f$ on $]a,\infty[$ is {\em integrable in the generalized sense} if it is 
integrable on $]a+1,\infty[$ and there exists a finite set $\Omega\subset\cc$ 
and complex coefficients $(f_k)_{k \in \Omega}$ such that 
\begin{align} 
f-\sum_{k\in\Omega} f_k(r-a)^k
\end{align} 
is integrable on $]a,a+1[$.  We define
\begin{align}
\label{gener}
&\gen\int_a^\infty f(r)\dd r
\\ \notag := &\sum_{k\in\Omega\backslash\{-1\}}\frac{f_k}{k+1}+
\int_a^{a+1}\Big(f(r)-\sum_{k\in\Omega}f_k (r-a)^k\Big)\dd r+\int_{a+1}^\infty
f(r)\dd r.\end{align}
\end{definition}

We note that the set 
$\{k \in \Omega \, | \, \re(k)\leq -1\}$ and the corresponding $f_k$ are uniquely determined by $f$. It is convenient to allow $k \in \Omega$ with $\re(k)>-1$. The generalized integral of $f$ does not depend on the choice of $\Omega$.

The generalized integral extends the standard integral:
\begin{align}
\gen\int_a^\infty f(r)\dd r=\int_a^\infty f(r)\dd  r \quad \text{for } f \in L^1[a, \infty[. \end{align}
 It is translation 
invariant in the following sense:
\begin{align}
\label{eq:genInt_transl}
\gen\int_a^\infty f(r) \dd r
&= \gen\int_{a-\alpha}^{\infty} f(u+\alpha) \dd u, \quad\alpha\in\rr.
\end{align}
In particular, we can without loss of generality assume $a=0$. 

If $f$ is defined on an interval $]0,b[$ with $b>0$, we define $\gen \int_0^b f(r) \dd r$ 
as the generalized integral of the extension of $f$ to $[0, \infty[$ by zero.

\begin{proposition} If $f$ satisfies Def. \ref{def:genInt} on $[0,\infty[$, then
\begin{align}\gen\int_0^\infty f(r)\dd
r=\lim_{\delta\searrow0}\Bigg(\int_\delta^\infty f(r)\dd
r+\sum_{k\in\Omega\backslash\{-1\}}\frac{f_k}{k+1}\delta^{k+1}+f_{-1}\ln(\delta)\Bigg).\end{align}
\end{proposition}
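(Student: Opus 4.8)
The plan is to compute the limit on the right-hand side directly by splitting the integral $\int_\delta^\infty$ at the point $1$ (assuming, as we may by translation invariance, that $a=0$) and comparing with the three terms in the definition \eqref{gener}. The only part of $\int_\delta^\infty f(r)\,\dd r$ that depends on $\delta$ is $\int_\delta^1 f(r)\,\dd r$, so the whole problem reduces to understanding $\lim_{\delta\searrow 0}\big(\int_\delta^1 f(r)\,\dd r + \sum_{k\in\Omega\setminus\{-1\}} \frac{f_k}{k+1}\delta^{k+1} + f_{-1}\ln\delta\big)$ and checking it equals $\sum_{k\in\Omega\setminus\{-1\}}\frac{f_k}{k+1} + \int_0^1\big(f(r)-\sum_{k\in\Omega} f_k r^k\big)\,\dd r$.

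First I would write $f(r) = \big(f(r) - \sum_{k\in\Omega} f_k r^k\big) + \sum_{k\in\Omega} f_k r^k$ on $]0,1[$. The bracketed remainder $g(r):=f(r)-\sum_{k\in\Omega} f_k r^k$ is integrable on $]0,1[$ by hypothesis, so $\int_\delta^1 g(r)\,\dd r \to \int_0^1 g(r)\,\dd r$ by dominated convergence as $\delta\searrow 0$; this produces exactly the finite-part integral term in \eqref{gener}. It remains to treat $\int_\delta^1 \sum_{k\in\Omega} f_k r^k\,\dd r = \sum_{k\in\Omega} f_k \int_\delta^1 r^k\,\dd r$ termwise. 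For $k\neq -1$, $\int_\delta^1 r^k\,\dd r = \frac{1}{k+1} - \frac{\delta^{k+1}}{k+1}$, and the $\delta$-dependent piece $-\frac{f_k}{k+1}\delta^{k+1}$ is precisely cancelled by the corresponding term $+\frac{f_k}{k+1}\delta^{k+1}$ in the proposition's displayed limit, leaving the constant $\frac{f_k}{k+1}$, which is the first term of \eqref{gener}. For $k=-1$, $\int_\delta^1 r^{-1}\,\dd r = -\ln\delta$, and this is exactly cancelled by the $f_{-1}\ln\delta$ term in the displayed limit, leaving no contribution — consistent with the $\lambda=-1$ case of the generalized integral being $0$ and with $-1$ being excluded from the first sum in \eqref{gener}.

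**The only genuine subtlety** is bookkeeping with complex exponents $k\in\Omega$: when $\re(k+1)>0$ one has $\delta^{k+1}\to 0$, when $\re(k)>-1$ but $k$ could still be complex everything is harmless, and when $\re(k)<-1$ the term $\delta^{k+1}$ blows up but is cancelled identically before taking the limit, so one never actually takes a limit of a divergent quantity — the cancellation is algebraic and exact for every $\delta>0$. I would therefore organize the proof as: (i) reduce to $a=0$ via \eqref{eq:genInt_transl}; (ii) write $\int_\delta^\infty f = \int_\delta^1 f + \int_1^\infty f$ and note the second summand is $\delta$-independent and already matches the last term of \eqref{gener}; (iii) on $]\delta,1[$ substitute the decomposition $f = g + \sum_{k\in\Omega} f_k r^k$ and integrate the homogeneous terms explicitly; (iv) observe the exact cancellation of all $\delta^{k+1}$ (for $k\neq-1$) and $\ln\delta$ (for $k=-1$) contributions against the compensating terms inside the limit; (v) pass to the limit in the now-integrable remainder. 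The main obstacle — really the only thing to be careful about — is making sure the algebraic cancellation in step (iv) is performed for fixed $\delta$ before the limit, so that no term is ever individually divergent; once that is made explicit the identity falls out immediately.
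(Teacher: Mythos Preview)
Your proposal is correct and follows essentially the same approach as the paper: decompose $f$ into the integrable remainder $g(r)=f(r)-\sum_{k\in\Omega}f_k r^k$ plus the homogeneous terms, integrate the latter explicitly over $[\delta,1]$, and observe the exact algebraic cancellation of the $\delta^{k+1}$ and $\ln\delta$ pieces before passing to the limit. The paper organizes the same computation slightly more compactly by writing, for every $0<\delta\le1$, the exact identity
\[
\gen\int_0^\infty f(r)\,\dd r=\int_\delta^\infty f(r)\,\dd r+\sum_{k\in\Omega\setminus\{-1\}}\frac{f_k}{k+1}\delta^{k+1}+f_{-1}\ln\delta+\int_0^\delta g(r)\,\dd r,
\]
and then noting that the last term tends to $0$; but this is just your steps (ii)--(iv) collapsed into one line.
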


\begin{proof}
For any $0<\delta\leq1$ we have
\begin{align}\notag\gen\int_0^\infty f(r)\dd
r=&\int_\delta^\infty f(r)\dd
  r+\sum_{k\in\Omega\backslash\{-1\}}\frac{f_k}{k+1}\delta^{k+1}+f_{-1}\ln(\delta)\\
  &+\int_0^\delta\Big(f(r)-\sum_{k\in\Omega}f_kr^k\Big)\dd r.
\label{llast}    \end{align}
The last term of
\eqref{llast} converges to $0$ as $\delta \to 0$.
\end{proof}

\subsection{Change of variables}

The standard formula for a change of variables in an integral is not true for the generalized integral, but one may write down the correction terms explicitly.

\begin{proposition} 
Suppose that $g : [0,\infty[ \to [0,\infty[$ is a
bijection, smooth down to zero, such that $g(0)=0$ and $g'(0) \neq 0$. If $f$ is integrable in the generalized sense, then the same is true for $(f \circ g) g'$ and we have
\begin{align}
    & \gen \int_0^\infty  f(g(u))g'(u) \dd u - \gen \int_0^\infty  f(r) \dd r  \label{eq:change_of_var} \\
    = &- f_{-1} \ln g'(0) + \sum_{l=2}^{\infty} \frac{f_{-l}}{(l-1)(l-1)!} \left. \frac{\dd^{l-1}}{\dd u^{l-1}}  \left( \frac{u}{g(u)} \right)^{l-1} \right|_{u=0}, \nonumber
\end{align}
where we put $f_k =0$ for $k \not \in \Omega$. In particular for $g(u)= \alpha u$, $\alpha >0$:
\begin{equation}
    \gen \int_0^\infty f(\alpha u) \alpha \dd u = \gen \int_0^\infty f(r) \dd r - f_{-1} \ln \alpha. \label{eq:genInt_scaling}
\end{equation}
\end{proposition}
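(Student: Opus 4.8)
The plan is to reduce the statement to the behavior near the origin, since both generalized integrals agree with Lebesgue integrals on $[1,\infty[$ after a change of variables pushes the singular endpoint back to $0$; the only discrepancy can come from the homogeneous terms $f_k(r)=r^k$ with $\re(k)\le -1$. So first I would write $f=\sum_{k\in\Omega} f_k r^k + f_{\mathrm{reg}}$ with $f_{\mathrm{reg}}$ integrable near $0$, observe that for the integrable piece the usual change-of-variables formula holds and contributes nothing to the left-hand side of \eqref{eq:change_of_var}, and that for terms $r^k$ with $\re(k)>-1$ (which are integrable) the same is true. Hence it suffices to prove the formula for a single monomial $f(r)=r^{-l}$ with $l$ a positive integer, and for $f(r)=r^k$ with $\re(k)\le -1$ but $k$ not a negative integer; the latter case I expect to contribute $0$ to both sides, which should follow from analytic continuation in $k$ or a direct computation, so the heart of the matter is $f(r)=r^{-l}$, $l\ge 1$.

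For $f(r)=r^{-l}$, $l\ge1$, I would compute $\gen\int_0^\infty (f\circ g)(u)g'(u)\,\dd u$ directly from Definition \ref{def:genInt}. Since $g$ is smooth with $g(0)=0$, $g'(0)\neq0$, we have $g(u)=u\,h(u)$ with $h$ smooth and $h(0)=g'(0)>0$, so $(f\circ g)(u)g'(u) = u^{-l} h(u)^{-l} g'(u)$, and the function $\phi(u):=h(u)^{-l}g'(u) = (u/g(u))^{l}\,g'(u)$ is smooth down to $0$. Expanding $\phi$ in a Taylor series at $0$, $\phi(u)=\sum_{j\ge0}\phi_j u^j$, the singular part of $u^{-l}\phi(u)$ near $0$ is $\sum_{j=0}^{l-1}\phi_j u^{j-l}$, with exponents running over $-l,-l+1,\dots,-1$. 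By Definition \ref{def:genInt}, the generalized integral of $u^{-l}\phi(u)$ differs from a convergent integral by $\sum_{j=0}^{l-2}\frac{\phi_j}{j-l+1}$ (the $j=l-1$ term, exponent $-1$, is dropped per the definition), and the same analysis applied trivially to $f(r)=r^{-l}$ itself (where only the exponent $-l$ appears, with coefficient $1$) gives its generalized integral. Subtracting, the difference \eqref{eq:change_of_var} for this $f$ becomes a finite sum over $j=0,\dots,l-1$ of the Taylor coefficients $\phi_j$ of $(u/g(u))^l g'(u)$ against explicit rational weights, with the $j=l-1$ term producing the anomalous $-\ln g'(0)$ contribution since near exponent $-1$ one must track the constant of integration of $\log$ rather than a power — this is where the $f_{-1}\ln g'(0)$ term is born. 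I would then relabel $l\rightsquigarrow l$ and reconcile $(u/g(u))^l g'(u)$ with $(u/g(u))^{l-1}$ via $g'(u)=\frac{\dd}{\dd u}g(u)$ and the identity $\frac{\dd}{\dd u}\big[(u/g(u))^{l-1}\,(\text{something})\big]$, i.e.\ integrate-by-parts inside the Taylor expansion, to match the stated form $\frac{1}{(l-1)!}\frac{\dd^{l-1}}{\dd u^{l-1}}(u/g(u))^{l-1}\big|_{u=0}$ with weight $\frac{1}{l-1}$.

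The main obstacle is precisely this last bookkeeping step: showing that the combination of Taylor coefficients $\sum_{j=0}^{l-2}\phi_j/(j-l+1)$, where $\phi(u)=(u/g(u))^l g'(u)$, collapses to the single closed-form expression $\frac{1}{(l-1)\,(l-1)!}\frac{\dd^{l-1}}{\dd u^{l-1}}(u/g(u))^{l-1}\big|_{u=0}$. The cleanest route I foresee is to write $g'(u)\,(u/g(u))^{l} = -\frac{1}{l-1}\frac{\dd}{\dd u}\big((u/g(u))^{l-1}\big) + \frac{l-1}{l-1}\,u^{-1}(u/g(u))^{l-1}\cdot(\text{lower order})$ — more precisely to notice $\frac{\dd}{\dd u}(u/g(u))^{l-1} = (l-1)(u/g(u))^{l-2}\big(\frac{g(u)-ug'(u)}{g(u)^2}\big) = (l-1)\big((u/g(u))^{l-1}u^{-1} - (u/g(u))^{l}g'(u)\big)$, which rearranges to $g'(u)(u/g(u))^{l} = (u/g(u))^{l-1}u^{-1} - \frac{1}{l-1}\frac{\dd}{\dd u}(u/g(u))^{l-1}$. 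Reading off the coefficient of $u^{j-l}$, $j=0,\dots,l-1$, on both sides then matches $\phi_j$ in terms of the Taylor coefficients of $(u/g(u))^{l-1}$, and after summing the geometric-type telescoping in $j$ the claimed formula drops out; the $\delta$-regularized form of the Proposition preceding this one (with the $f_{-1}\ln\delta$ term) can be used as an alternative bookkeeping device if the direct Taylor manipulation gets unwieldy. Finally, \eqref{eq:genInt_scaling} is the special case $g(u)=\alpha u$: then $u/g(u)=1/\alpha$ is constant, all derivatives in the sum vanish, $g'(0)=\alpha$, and only $-f_{-1}\ln\alpha$ survives.
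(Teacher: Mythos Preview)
Your reduction to monomials by linearity is exactly what the paper does, and your instinct that only the negative-integer powers matter is correct. However, there are two concrete issues and one missed simplification.

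First, your key identity is miscomputed. From $\frac{\dd}{\dd u}(u/g)^{l-1}=(l-1)(u/g)^{l-2}\frac{g-ug'}{g^2}$ you should obtain
\[
\frac{\dd}{\dd u}\Big(\frac{u}{g}\Big)^{l-1}=(l-1)\,u^{-1}\left[\Big(\frac{u}{g}\Big)^{l-1}-\Big(\frac{u}{g}\Big)^{l}g'\right],
\]
hence $\phi(u)=(u/g)^{l}g'=(u/g)^{l-1}-\tfrac{u}{l-1}\frac{\dd}{\dd u}(u/g)^{l-1}$. Your version drops a factor $u^{-1}$ from the second term, so the rearranged formula you wrote is wrong. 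With the corrected identity the Taylor coefficients are $\phi_j=a_j\frac{l-1-j}{l-1}$ (where $a_j$ are the Taylor coefficients of $(u/g)^{l-1}$), and in particular $\phi_{l-1}=0$: for $l\ge2$ there is \emph{no} $u^{-1}$ term in $(f\circ g)g'$, so your remark that ``the $j=l-1$ term produc[es] the anomalous $-\ln g'(0)$ contribution'' is off --- the logarithm arises only from the genuine $f_{-1}r^{-1}$ piece of $f$, i.e.\ the case $l=1$, which you do not treat separately (and cannot, since your identity degenerates at $l=1$).

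Second, the paper avoids this conversion step entirely by the antiderivative trick
\[
g(r)^{k}g'(r)-r^{k}=\frac{1}{k+1}\frac{\dd}{\dd r}\big(g(r)^{k+1}-r^{k+1}\big)\qquad(k\neq-1),
\]
and then Taylor-expands $g(r)^{k+1}=r^{k+1}\sum_j a_j r^j$ with $a_j$ the coefficients of $(r/g(r))^{l-1}$ directly. The generalized integral of the total derivative picks up exactly $g(1)^{k+1}-a_{-k-1}$, giving the contribution $-a_{-k-1}/(k+1)$, which is zero unless $-k-1\in\nn$ and equals $\frac{a_{l-1}}{l-1}$ for $k=-l$. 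This handles non-integer $k$ with $\re(k)\le-1$ and integer $k\le-2$ in one stroke, with no auxiliary identity needed. For $k=-1$ the paper writes $g(r)=g'(0)\,r\,\e^{R(r)}$ and computes $\gen\int_0^1 g'/g\,\dd r=R(1)=\ln g(1)-\ln g'(0)$. Your sketch would benefit from adopting this antiderivative viewpoint: it is the missing idea that makes the bookkeeping collapse.
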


\begin{proof}
The first statement easily follows from the Taylor expansion of $g$ around zero. Let $\Delta_f(g)$ be the left hand side of \eqref{eq:change_of_var}. We have
\begin{align}
\Delta_f(g) = \int_{g(1)}^1 f(r) \dd r + \gen \int_0^1 \left( f(g(r))g'(r) - f(r) \right) \dd r,
\label{eq:Delta_fg_1}
\end{align}
where we used the standard change of variables formula for the integral over $[1, \infty[$ and $\int_{g(1)}^1 - \int_1^\infty = \int_{g(1)}^1$. Now decompose $f(r) = F(r) + \sum_{k \in \Omega} f_k r^k$. It is easy to check that $F$ gives no contribution in \eqref{eq:Delta_fg_1}, so
\begin{equation}
\Delta_f(g) = \sum_{k \in \Omega} f_k \left( \int_{g(1)}^1 r^k \dd r + \gen \int_0^1 \left( g(r)^k g'(r) - r^k \right) \dd r \right).
\end{equation}
We will analyze this expression term by term. First consider $k \neq -1$. Then
\begin{align}
    &\int_{g(1)}^1 r^k \dd r + \gen \int_0^1 \left( g(r)^k g'(r) - r^k \right) \dd r \label{eq:change_var_termk} \\
    =&  - \frac{g(1)^{k+1}}{k+1} + \frac{1}{k+1} \gen \int_0^1 \frac{\dd}{\dd r} g(r)^{k+1} \dd r. \nonumber
\end{align}
We have a Taylor expansion of the form
\begin{equation}
    g(r)^{k+1} = r^{k+1} \left( \sum_{j=0}^N a_j r^j + R(r) \right),
\end{equation}
where $R(r) =O(r^{N+1})$ and $N$ is such that $\re(N+k+2) >0$. Then
\begin{align}
    & \gen \int_0^1 \frac{\dd}{\dd r} g(r)^{k+1} \dd r = \sum_{j=0}^N (k+j+1) a_j \, \gen \int_0^1 r^{k+j} \dd r + R(1) \\
    = & \sum_{\substack{j=0 \\ k+j \neq -1}}^N a_j + R(1) = g(1)^{k+1} - a_{-k-1}, \nonumber
\end{align}
where we put $a_{-k-1} = 0$ if $-k-1 \not \in \nn$. Therefore, \eqref{eq:change_var_termk} vanishes for $-k-1 \not \in \nn$. If $k=-l$ with $l \in \{ 2, 3, \dots \}$, then \eqref{eq:change_var_termk} is equal to
\begin{equation}
 - \frac{1}{-l+1} a_{l-1} =  \frac{1}{(l-1)(l-1)!} \left. \frac{\dd^{l-1}}{\dd r^{l-1}}  \left( \frac{r}{g(r)} \right)^{l-1} \right|_{r=0}.
\end{equation}
To handle the case $k=-1$, we write $g(r)= g'(0)r \e^{R(r)}$ with $R(0)=0$ and compute
\begin{equation}
    \gen \int_0^1 \frac{g'(r)}{g(r)} \dd r = R(1) = \ln g(1) - \ln g'(0).
\end{equation}
\end{proof}

 Formula \eqref{eq:genInt_scaling} shows that the generalized integral
is invariant under scaling if and only if $f_{-1} = 0$, and invariant
under a large class of a change of variables if $f_k=0$ for every negative integer $k$.

\begin{definition}
The generalized integral \eqref{gener} is called {\em anomalous} if
there exists  $n=1,2,\dots$ such that
$f_{-n}\neq0$.
\end{definition}

Remarkably, the generalized integral is always invariant under changes of variables given by power functions.

\begin{proposition}
\label{prop:genInts_scaling}
Let $f$ be integrable in the generalized sense. Then for $\alpha > 0$
\begin{equation}
    \label{eq:genInt_powers} \gen\int_0^\infty f(r)\dd r =\gen\int_{0}^\infty f( u^\alpha)\, \alpha u^{\alpha-1} \dd u .
\end{equation}
\end{proposition}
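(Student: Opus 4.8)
The plan is to reduce everything to the behavior of the homogeneous terms at the origin, exactly as in the proof of the previous proposition. Write $f(r) = F(r) + \sum_{k \in \Omega} f_k r^k$ with $F \in L^1[0,\infty[$ (and $F$ integrable near $0$). The substitution $r = u^\alpha$ is an honest change of variables on the integrable part: since $F$ is $L^1$, the standard change-of-variables formula gives $\gen\int_0^\infty F(u^\alpha)\alpha u^{\alpha-1}\dd u = \int_0^\infty F(u^\alpha)\alpha u^{\alpha-1}\dd u = \int_0^\infty F(r)\dd r$, with no anomaly, because $u \mapsto u^\alpha$ maps $L^1$ near $0$ to $L^1$ near $0$ (the Jacobian $\alpha u^{\alpha-1}$ being locally integrable and $r = u^\alpha \to 0$). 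So by linearity of the generalized integral it suffices to prove \eqref{eq:genInt_powers} for each pure power $f(r) = r^k$, $k \in \cc$; that is, to show
\begin{equation*}
\gen\int_0^\infty r^k \dd r = \gen\int_0^\infty u^{\alpha k}\,\alpha u^{\alpha - 1}\dd u = \alpha\,\gen\int_0^\infty u^{\alpha k + \alpha - 1}\dd u.
\end{equation*}
Strictly, $r^k$ is not integrable at $\infty$, so one should cut it off — say work with $r^k \one_{[0,1]}(r)$, which is integrable in the generalized sense, and note the substitution sends $[0,1]$ in $r$ to $[0,1]$ in $u$; the tail $[1,\infty[$ contributes the same honest integral on both sides after the standard change of variables. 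Equivalently, one tracks only the finite-point contribution $\gen\int_0^1$.

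Next I would just evaluate both sides of the reduced identity using the definition. For the left side, $\gen\int_0^1 r^k\dd r = \frac{1}{k+1}$ if $k \neq -1$ and $=0$ if $k = -1$. For the right side, the integrand is the single homogeneous power $u^{\alpha k + \alpha - 1}$ with coefficient $\alpha$; its exponent equals $-1$ precisely when $\alpha k + \alpha - 1 = -1$, i.e.\ $\alpha(k+1) = 0$, i.e.\ (since $\alpha > 0$) when $k = -1$. So the two cases match up perfectly: if $k = -1$ the right side is $\alpha \cdot \gen\int_0^1 u^{-1}\dd u = \alpha \cdot 0 = 0$, agreeing with the left side; if $k \neq -1$ the right side is $\alpha \cdot \frac{1}{\alpha k + \alpha} = \alpha \cdot \frac{1}{\alpha(k+1)} = \frac{1}{k+1}$, again agreeing with the left side. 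This is the crux of the matter: the substitution $r = u^\alpha$ neither creates nor destroys an exponent equal to $-1$, which is exactly why no anomalous correction term appears, in sharp contrast to the affine change of variables $g(u) = \alpha u$ in \eqref{eq:genInt_scaling}, where the $r^{-1}$ term persists as $\alpha^{-1} u^{-1}$ and picks up the $-f_{-1}\ln\alpha$ anomaly.

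The main obstacle, such as it is, is bookkeeping rather than substance: I must be careful that the map $u \mapsto u^\alpha$ is \emph{not} smooth down to zero for non-integer $\alpha$, so the earlier change-of-variables proposition does not apply and one genuinely has to argue from Definition \ref{def:genInt} directly; and I must check that a function $f$ integrable in the generalized sense really is sent to one (namely, that $\sum_k f_k u^{\alpha k}$ is again a finite sum of homogeneous terms — which it is, with new exponent set $\alpha\Omega$ — and that $(f\circ g)g' - \sum_k f_k u^{\alpha k}\alpha u^{\alpha-1}$ is integrable near $0$, which follows since $\alpha u^{\alpha - 1}\bigl(F(u^\alpha) + (\text{the }L^1\text{ remainder of }f\text{'s expansion at }0)\bigr)$ is $L^1$ near $0$ by the substitution applied to an honest integrable function). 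Once that is in place the computation above closes the proof. One can also phrase the whole argument a little more slickly using the preceding Proposition's dimensional-regularization philosophy — deform $k$ off the exceptional value $-1$, where both sides are manifestly equal analytic functions of $k$, then take the limit — but since the identity is literally an equality of elementary closed forms in each case, the direct case split is cleanest.
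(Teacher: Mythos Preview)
Your proof is correct and follows essentially the same approach as the paper: reduce by linearity to the case $f(r) = r^k \one_{[0,1]}(r)$ and verify the identity by direct computation. The paper's proof is a terse two-line sketch (``by linearity it suffices to check $f(r)=r^k\one_{[0,1]}(r)$; this is an elementary calculation''), and you have simply carried out that elementary calculation in full, including the key observation that the exponent $\alpha(k+1)-1$ equals $-1$ exactly when $k=-1$.
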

\begin{proof}
The formula is true for integrable $f$, so by linearity it is sufficient to verify it for $f(r)= r^k \one_{[0,1]}(r)$. This is an elementary calculation.
\end{proof}

\subsection{Dimensional regularization}
\label{ssc:dim_reg}

In this subsection we describe a method to compute generalized integrals by analytic continuation. It is closely related to dimensional regularization used in QFT.

Let $F$ be a holomorphic function on $U \setminus \{ z_0 \}$, where $z_0 \in U$ and $U$ is open. Then one has a Laurent expansion
\begin{equation}
    F(z) = \sum_{j \in \zz} F_j (z-z_0)^j
\end{equation}
convergent for $z \neq z_0$ sufficiently close to $z_0$. Coefficients $\res F(z_0) := F_{-1}$ and $\fp \limits_{z \to z_0} F(z) :=F_0$ are called the residue and the finite part of $F$ at $z_0$.

Let  $N\in\nn$ and let $f : \, ] 0 , \infty [ \times \{ \alpha \in \cc \, | \, \re(\alpha) > -N-1 \} \to \cc$ be a function such that $f(r , \cdot)$ is holomorphic for each $r$, $\| f(\cdot, \alpha) \|_{L^1[1, \infty[}$ is bounded locally uniformly in $\alpha$, and there exist holomorphic functions $f_0, \dots, f_N$ of $\alpha$ such that the $L^1]0,1]$ norm of $f(r,\alpha)-\sum_{n=0}^N r^{\alpha+n}f_n(\alpha)$ is bounded locally uniformly in $\alpha$. Then $f(\cdot, \alpha)$ is integrable in the generalized sense, and for $- \alpha \not \in \{ 1 , \dots, N \}$ one has
\begin{align} 
\label{anla}
&\gen\int_0^\infty f(r,\alpha)\dd r \\ \notag
= &  \sum_{n=0}^N \frac{f_{n}(\alpha)}{\alpha+n+1}
+ \int_0^1 \Big( f(r,\alpha) - \sum_{n=0}^N r^{\alpha+n} f_n(\alpha) \Big) \dd r
+ \int_1^\infty f(r,\alpha) \dd r.
\end{align}
By Morera's theorem, the right hand side is, away from the poles at $-1, \dots, -N$, a~holomorphic function of $\alpha$. Therefore, to obtain \eqref{anla} in the non-anomalous case it is enough to compute \eqref{anla} in
the region where the usual integral is convergent and continue analytically. 

Let $m \in \{ 1 , \dots, N \}$. The right hand side of \eqref{anla} has a simple pole at $\alpha = -m$ with residue $f_{m-1}(-m)$ (possibly zero). Its finite part is 
\begin{equation}
\operatornamewithlimits{fp}_{\alpha \to -m} \gen \int_0^\infty f(r,\alpha)\dd r
  =\lim_{\alpha\to-m}\Bigg(\gen\int_0^\infty f(r,\alpha)\dd
r-\frac{f_{m-1}(-m)}{\alpha+m}\Bigg).  
\end{equation}
One may be tempted to think that the finite part is equal to the generalized integral of $f(\cdot, -m)$, but in fact one has to subtract also a certain finite term.

\begin{proposition}
The generalized integral at $\alpha=-m$ is given by
\begin{equation}
 \gen\int_0^\infty f(r,-m)\dd r = \operatornamewithlimits{fp}_{\alpha \to -m} \gen \int f(r,\alpha)\dd r -f'_{m-1}(-m). 
 \label{eq:dimreg_anomalous}
\end{equation}
\end{proposition}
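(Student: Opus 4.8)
The strategy is to isolate the offending term $r^{\alpha+m-1}f_{m-1}(\alpha)$, whose exponent passes through $-1$ as $\alpha \to -m$, and handle it separately from the rest of the sum. Write $f(r,\alpha) = r^{\alpha+m-1}f_{m-1}(\alpha) + h(r,\alpha)$, where $h$ collects the remaining terms. For $h$ the exponents $\alpha+n$ with $n \neq m-1$ stay away from $-1$ near $\alpha = -m$, so by the holomorphy discussion following \eqref{anla} (Morera's theorem) the generalized integral $\gen\int_0^\infty h(r,\alpha)\dd r$ is holomorphic at $\alpha = -m$ and its value there is simply $\gen\int_0^\infty h(r,-m)\dd r$ — no correction. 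Thus the entire discrepancy in \eqref{eq:dimreg_anomalous} comes from the single term $g(r,\alpha) := r^{\alpha+m-1}f_{m-1}(\alpha)\one_{[0,1]}(r)$, and it suffices to prove the identity for this $g$ in place of $f$.

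\textbf{The model computation.} For $g$, the generalized integral is governed by \eqref{gener}: for $\alpha \neq -m$ the coefficient of $r^{-1}$ in the expansion of $g$ at $0$ vanishes, so $\gen\int_0^1 r^{\alpha+m-1}\dd r = \frac{1}{\alpha+m}$ times $f_{m-1}(\alpha)$, i.e.
\begin{equation}
\gen\int_0^\infty g(r,\alpha)\dd r = \frac{f_{m-1}(\alpha)}{\alpha+m}, \qquad \alpha \neq -m,
\end{equation}
while at $\alpha = -m$ the power is exactly $-1$, and the definition gives the value $0$:
\begin{equation}
\gen\int_0^\infty g(r,-m)\dd r = f_{m-1}(-m)\,\gen\int_0^1 r^{-1}\dd r = 0.
\end{equation}
Now compute the finite part of $\frac{f_{m-1}(\alpha)}{\alpha+m}$ at $\alpha = -m$: writing $f_{m-1}(\alpha) = f_{m-1}(-m) + f'_{m-1}(-m)(\alpha+m) + O((\alpha+m)^2)$, the Laurent expansion is $\frac{f_{m-1}(-m)}{\alpha+m} + f'_{m-1}(-m) + O(\alpha+m)$, so the finite part equals $f'_{m-1}(-m)$. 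Hence $\fp_{\alpha\to -m}\gen\int_0^\infty g(r,\alpha)\dd r - f'_{m-1}(-m) = 0 = \gen\int_0^\infty g(r,-m)\dd r$, which is exactly \eqref{eq:dimreg_anomalous} for $g$.

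\textbf{Assembly and the main subtlety.} Adding the $h$-contribution (which matches on both sides with no correction, by the Morera argument) to the $g$-contribution gives \eqref{eq:dimreg_anomalous} for $f$; the $-f'_{m-1}(-m)$ on the right is the entire anomaly, carried by the single resonant term. The step requiring the most care is justifying that $\gen\int_0^\infty h(r,\alpha)\dd r$ is genuinely holomorphic at $\alpha = -m$ and continuous into its value $\gen\int_0^\infty h(r,-m)\dd r$: one must check that the hypotheses on $f$ (local uniform $L^1[1,\infty[$ bound, and local uniform $L^1]0,1]$ bound on $f(r,\alpha) - \sum_n r^{\alpha+n}f_n(\alpha)$) are inherited by $h$ — which they are, since $h$ is $f$ minus one of the listed homogeneous terms — and then apply the Morera-type argument verbatim, noting that near $\alpha = -m$ none of the exponents $\alpha + n$ with $n \in \{0,\dots,N\}\setminus\{m-1\}$ equals $-1$, so each term $\frac{f_n(\alpha)}{\alpha+n+1}$ in \eqref{anla} for $h$ is holomorphic there. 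This is where one also invokes that the set $\Omega$ in Definition \ref{def:genInt} may be chosen to include exponents with real part $> -1$ without affecting the value, so that the bookkeeping in \eqref{gener} applied to $h$ and to $g$ separately reassembles correctly.
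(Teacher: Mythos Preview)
Your proof is correct and follows essentially the same approach as the paper: both isolate the single resonant term $\frac{f_{m-1}(\alpha)}{\alpha+m}$ in \eqref{anla}, observe that the remaining terms are holomorphic at $\alpha=-m$ and equal the generalized integral there, and then read off the correction $f'_{m-1}(-m)$ from the Laurent expansion of the resonant term. The only cosmetic difference is that the paper subtracts this term directly from the formula \eqref{anla} and takes the limit in one line, whereas you decompose the integrand itself as $f=g+h$; the content is the same.
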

\begin{proof}
By definition, the generalized integral of $f(\cdot,-m)$ is given by \eqref{anla} evaluated at $-m$, with the term $n=m-1$ in the summation omitted. Therefore,
\begin{equation}
    \gen \int_0^\infty f(r,-m) \dd r = \lim_{\alpha \to -m} \left( \gen \int_0^\infty f(r , \alpha) \dd r - \frac{f_{m-1}(\alpha)}{\alpha+m} \right).
\end{equation}
The formula \eqref{eq:dimreg_anomalous} immediately follows.
\end{proof}

The following facts deserve emphasis.
\begin{itemize}
    \item The term $f_{m-1}'(-m)$ in \eqref{eq:dimreg_anomalous} may be nonzero even if the generalized integral of $f(r,\alpha)$ has a removable singularity at $\alpha =-m$.
    \item The finite part for $\alpha \to -m$ of the analytic continuation of $\int_0^\infty f(r, \alpha) \dd r$ is not uniquely determined by the function $f(r ,-m)$, in contrast to the generalized integral of $f(r,-m)$.
\end{itemize}

\subsection{Examples}
Let us give a few examples of generalized integrals.
\begin{example}
\begin{equation}
\gen \int_0^1\alpha r^{-1+\alpha}\dd r =\begin{cases}1 &\qquad
  \text{for } \alpha \neq 0;\\0&\qquad \text{for } \alpha=0.\end{cases}
\end{equation}
Therefore, the
the limit of generalized integrals of $f(r, \alpha) = \alpha r^{-1 + \alpha} \one_{[0,1]}(r)$
for $\alpha \to 0$ is not the
generalized integral of $f(r,0)$.
The finite part for $\alpha \to 0$ (in this case, the limit) is nonzero even though $f(r,0)=0$.
\end{example}

\begin{example}[Gamma integral]
\begin{equation*}
\gen \int_0^\infty\e^{-r} r^{-1+\alpha}\dd r =\begin{cases}\Gamma(\alpha), &\quad
\alpha\notin-\nn_0;\\[2ex]\frac{(-1)^m}{m!}\psi(m+1),
&\quad  \alpha=-m\in-\nn_0.\end{cases}
\end{equation*}
\end{example}
\begin{example}[Beta integrals] Assume that $\re(v)>0$. Then
\begin{equation*}
\gen \int_0^1 r^{-1+u}(1-r)^{v-1}\dd r =\begin{cases}\frac{\Gamma(u)\Gamma(v)}{\Gamma(u+v)}, &
 u\notin-\nn_0;\\[2ex]\frac{(-1)^m(v-m)_m\big(\psi(m+1)-\psi(v-m)\big)}{m!},&  u=-m\in-\nn_0.\end{cases}
\end{equation*}
\begin{equation*}
\gen \int_0^\infty r^{-1+u}(1+r)^{v-1}\dd r =\begin{cases}\frac{\Gamma(u)\Gamma(1-u-v)}{\Gamma(1-v)}, &
 u\notin-\nn_0;\\[2ex]\frac{(-1)^m(1-v)_m\big(\psi(m+1)-\psi(1+m-v)\big)}{m!},&  u=-m\in-\nn_0.\end{cases}
\end{equation*}
\end{example}

\section{Bessel equation}
\label{sec:bessel}

\subsection{Modified Bessel equation} 

Here is the \emph{modified Bessel equation}:
\begin{align}
\left(\partial_r^2+\frac{1}{r}\partial_r-\frac{\alpha ^2}{r^2}-1\right)v(r)& = 0.
\label{lap5}\end{align}

There are two standard solutions of (\ref{lap5}).
The first is the {\em modified Bessel function}, which can be defined by the power series
\[I_\alpha (r)=
 \sum_{n=0}^\infty\frac{\left(\frac{r}{2}\right)^{2n+\alpha }}
 {n!\Gamma(\alpha +n+1)}.\]

 The second solution is the {\em Macdonald function}, which for $\re (r)>0$ and all $\alpha $ can be defined by the absolutely convergent integral 
 \begin{align}
K_\alpha (r)&:=\frac12\int_0^\infty\exp 
\left(-\frac{r}{2}(s+s^{-1})\right)s^{ \alpha -1}\dd s.\label{basset}
 \end{align}
We have the identities
\begin{align}
K_{-\alpha }(r)=
K_\alpha (r)&=\frac{\pi}{2\sin\pi \alpha }(I_{-\alpha }(r)-I_\alpha (r)),\label{macdo1}\\
I_\alpha (r)&=\frac{1}{\ii\pi}\bigl(
K_\alpha (\e^{-\ii\pi }r)-\e^{\ii\pi \alpha }K_\alpha (r)\bigr), \label{macdo2}
\end{align}
the asymptotics
 for $|\arg r|<\pi-\epsilon$, $\epsilon>0$,
\begin{align}\lim_{|r|\to\infty} \Big( \sqrt{\frac{\pi}{2r}} \e^{-r} \Big)^{-1} K_\alpha (r)
=1,\label{asim}\end{align}
and the recurrence relations:
\begin{align}
\left(\frac{1}{r}\partial_r\right)^nr^{\pm \alpha } I_\alpha (r)&=r^{\pm \alpha -n}I_{\alpha  \mp n}(r), \\
\left(-\frac{1}{r}\partial_r\right)^n r^{\pm \alpha } K_\alpha (r)&=r^{\pm \alpha -n}K_{\alpha  \mp n}(r).
\label{eq:recur_rel_BesselK}
\end{align}
We note also the inequality \cite{Laforgia}
\begin{equation}
\frac{K_{\alpha}(r)}{K_{\alpha}(R)} < \e^{R-r} \Big( \frac{R}{r} \Big)^\alpha, \qquad \alpha > \frac12, \  0 < r < R.
\label{neat_inequality}
\end{equation}

To see \eqref{neat_inequality}, first note that
\eqref{basset} can be transformed into
 \begin{align}
K_\alpha (r)&:=\frac12\int_1^\infty\exp 
\left(-\frac{r}{2}(s+s^{-1})\right)(s^\alpha+s^{-\alpha})s^{ -1}\dd s,\label{basset1}
 \end{align}
 which easily implies
 \begin{equation}\label{basset2}
   K_\alpha(r)< K_{\alpha'}(r),\quad 0\leq\alpha<\alpha'.\end{equation}
By \eqref{eq:recur_rel_BesselK}
\begin{equation}
\frac{K_\alpha'(r)}{K_\alpha(r)}=-\frac{K_{\alpha-1}(r)}{K_\alpha(r)}-\frac{\alpha}{r}. 
  \end{equation}
Hence, for $\alpha\geq\frac12$,
using \eqref{basset2} we obtain
\begin{equation} \label{pala}
\frac{K_\alpha'(r)}{K_\alpha(r)}>-1-\frac{\alpha}{r}. 
  \end{equation}
Integrating \eqref{pala} from $r$ to $R$ we obtain 
\eqref{neat_inequality}.

\subsection{Degenerate case}

The case $\alpha\in\zz$
is called degenerate, because then the standard solutions characterized by
asymptotics at $1$ coincide.
Using for \eqref{provo} the de l'H\^opital rule, one finds for $\alpha \in \{ 0,1,2,\dots \}$
\begin{align}
  I_{\pm\alpha} (r)& = \sum_{n=0}^\infty\frac{\left(\frac{r}{2}\right)^{2n+\alpha }}
     {n!(n+\alpha )!},\\\label{provo}
K_{\pm\alpha} (r)
&=\frac12 \sum_{k=0}^{\alpha -1}
\frac{(-1)^k(\alpha -k-1)!}{k!}\Big(\frac{r}{2}\Big)^{2k-\alpha }
\\
&+\frac{(-1)^\alpha }{2}\sum_{j=0}^\infty
\frac{\big(H_j+H_{\alpha +j}-2\gamma_\mathrm{E}-2\ln(\frac{r}{2})\big)}{j!(\alpha +j)!}\Big(\frac{r}{2}\Big)^{2j+\alpha }.\notag\end{align}

\subsection{Half-integer case}
\label{ssc:half_integer_Bessel}
In the half-integer case reduce to elementary functions. More precisely, for $k=0,1,2,\dots$, we have
\begin{align} 
I_{\frac12+k}(r)&=\Big(\frac2\pi\Big)^{\frac12}r^{\frac12+k}\Big(\frac1r\partial_r\Big)^k\frac{\sinh
                  r}{r},
  \\  
  I_{-\frac12-k}(r)&=\Big(\frac2\pi\Big)^{\frac12}r^{\frac12+k}\Big(\frac1r\partial_r\Big)^k\frac{\cosh r}{r},
  \\       \label{eq:MacDonald_halfinteger}
  K_{\pm(\frac12+k)}(r)&=\Big(\frac\pi2\Big)^{\frac12}r^{\frac12+k}\Big(-\frac1r\partial_r\Big)^k\frac{\e^{- r}}{r}.
\end{align}

\subsection{Bilinear integrals}
\label{Bilinear integrals-macdonald}

First let
us describe some integral identities satisfied by Bessel functions involving the usual integral. For $\re(a+b)>0$ and $|\re(\alpha)| <1$ we have
\begin{align}
\label{integral1}
\int_0^\infty  K_\alpha (ar) K_\alpha (br)2r\dd r
&=\frac{\pi}{\sin(\pi \alpha )} \frac{(a/b)^{\alpha }-(b/a)^{\alpha }}{a^2-b^2}, \qquad \alpha \neq 0, \\
\label{integral2}
 \int_0^\infty  K_0(ar) K_0(br)2r\dd r
&=\frac{2 \ln \frac{a}{b}}{a^2-b^2}.
\end{align}
If $b=a$ with $\re(a) >0$, the above integrals reduce to
\begin{align}
    \label{int3} \int_0^\infty  K_\alpha (ar)^22r\dd r&=\frac{\pi \alpha }
{a^2\sin (\pi \alpha ) }, \qquad \alpha \neq 0,  \\
\label{int4} \int_0^\infty  K_0(ar)^22r\dd
r&=\frac{1}{a^2},\ \ \re (a)>0.
\end{align}

Formula \eqref{integral1} is 
well-known \cite[Chapter 6.521]{GR}.
It can be
obtained from the identity
\begin{align*}&
(a^2-b^2)K_\alpha (ar) K_\alpha (br)\\ =&  (\partial_r^2 + r^{-1} \partial_r )K_\alpha (ar) K_\alpha (br) -  K_\alpha (ar) (\partial_r^2 + r^{-1} \partial_r ) K_\alpha (br),
\end{align*}
which follows directly from Bessel's equation and integration by parts.
The remaining formulas are then obtained by applying the
\hospital{} rule.

If one replaces the integration in (\ref{integral1}--\ref{int4}) by
generalized integration, it makes sense for every $\alpha$. If we
treat $r^2$ as the variable of integration, these generalized integrals are non-anomalous for $\alpha \in \zz \setminus \{ 0 \}$, so~expressions stated in (\ref{integral1}, \ref{int3}) for $\alpha \neq 0$ are true for all $\alpha \not \in \zz$. Below we calculate the anomalous integrals for $\alpha \in \zz$.

\begin{proposition}
\label{prop:gen_int_mcdonald}
Let $\alpha \in \zz$ and let $\re(a+b)>0$. If $a \neq b$, then 
\begin{align}
 \label{integral2a.}
 \gen\int_0^\infty  K_\alpha (ar) K_\alpha (br)2r\dd r
&=(-1)^\alpha 2\frac{\big(\frac{a}{b}\big)^\alpha\ln\big(\frac{a}{2}\big) -\big(\frac{b}{a}\big)^\alpha\ln\big(\frac{b}{2}\big) }{a^2-b^2}\\
  -\frac{(-1)^\alpha }{ab} \sum_{k=0}^{|\alpha |-1} \Big(\frac{a}{b}\Big)^{2k - |\alpha |+1} 
 &\big(\psi(1+k)+\psi(|\alpha |-k) \big). \notag 
     \end{align}
In the case $a=b$, one has 
\begin{align}
\label{int4a.} &\gen\int_0^\infty  K_\alpha (br)^22r\dd
r =\frac{(-1)^\alpha }{b^2} \Big(1+  |\alpha
     |\ln\big(\tfrac{b^2}{4}\big)
     +2|\alpha|\big(1-\psi(1+|\alpha|)\big)\Big).
\end{align}
\end{proposition}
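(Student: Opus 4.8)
The plan is to use the dimensional regularization method of Subsection \ref{ssc:dim_reg}, treating $\alpha$ as a complex parameter and analytically continuing the convergent integrals \eqref{integral1} and \eqref{int3} to $\alpha \in \zz$. Since the generalized integrals are taken with respect to $r^2$ (equivalently, after the substitution $u = r^2$ the singularities at $0$ become the powers $u^{\alpha+n}$), Proposition \ref{prop:genInts_scaling} guarantees that the generalized integral is unchanged, and the integrand $K_\alpha(ar)K_\alpha(br)2r$ has the right form to apply \eqref{eq:dimreg_anomalous}: near $r=0$, the product of the two power series for $K_\alpha$ (given by \eqref{macdo1} together with the series for $I_{\pm\alpha}$) contributes non-integrable terms only from the $I_{-|\alpha|}(ar)I_{-|\alpha|}(br)$ cross term, producing powers $r^{2k-2|\alpha|}$ for $k = 0, \dots, |\alpha|-1$, i.e.\ in the $u$-variable the powers $u^{k-|\alpha|}$ with $k - |\alpha| \in \{-|\alpha|, \dots, -1\}$.

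First I would record, for $\alpha \notin \zz$, the right-hand side of \eqref{integral1} (resp.\ \eqref{int3}), which by the remark preceding the Proposition already equals the generalized integral there. Then I would identify the anomaly: for $\alpha$ near a nonzero integer $m = \pm|\alpha|$, the function
\begin{equation*}
\alpha \longmapsto \gen\int_0^\infty K_\alpha(ar)K_\alpha(br)2r\,\dd r = \frac{\pi}{\sin(\pi\alpha)}\frac{(a/b)^\alpha - (b/a)^\alpha}{a^2-b^2}
\end{equation*}
has a simple pole coming from $\sin(\pi\alpha)$. According to \eqref{eq:dimreg_anomalous}, the generalized integral at $\alpha = m$ is \emph{not} the finite part of this expression, but the finite part minus $f'_{|m|-1}(m)$, where $f_n(\alpha)$ is the coefficient of $u^{\alpha+n}$ (in the $u = r^2$ variable) in the small-$u$ expansion of $\frac12 K_\alpha(a\sqrt u)K_\alpha(b\sqrt u)$. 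So the core computation is: (i) expand $\frac{\pi}{\sin(\pi\alpha)}\big((a/b)^\alpha - (b/a)^\alpha\big)/(a^2-b^2)$ in a Laurent series around $\alpha = m$ to extract the finite part, which will generate the $\ln(a/2)$, $\ln(b/2)$ terms via $\frac{\dd}{\dd\alpha}(a/b)^\alpha = (a/b)^\alpha \ln(a/b)$ and via differentiating $\Gamma$-type prefactors, yielding the $\psi$-values; and (ii) compute $f'_{|m|-1}(m)$ from the explicit series for $I_{-|\alpha|}$, differentiating the Gamma functions in the denominators $n!\,\Gamma(-|\alpha|+n+1)$ at $\alpha = m$ — this is exactly where $\psi(1+k) + \psi(|\alpha|-k)$ appears, since $\frac{\dd}{\dd\alpha}\frac{1}{\Gamma(\alpha+n+1)}$ brings down a digamma factor. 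The diagonal case $a = b$ is then handled either by taking $b \to a$ in \eqref{integral2a.} via \hospital, or equivalently by continuing \eqref{int3} directly; I would present it as the limit, being careful that the anomalous correction term $f'_{|m|-1}(m)$ also has to be evaluated at $b = a$.

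The main obstacle will be the bookkeeping in step (i)–(ii): correctly matching the index conventions so that the reflection $K_{-\alpha} = K_\alpha$ lets one assume $\alpha = |\alpha| \geq 1$, and then carefully collecting the several sources of digamma terms — from the $1/\sin(\pi\alpha) = 1/(\pi)\cdot\Gamma(\alpha)\Gamma(1-\alpha)\cdots$ type manip\-ulations, from differentiating the powers $(a/b)^{\pm\alpha}$, and from the derivative $f'_{|m|-1}(m)$ of the Bessel series coefficient — so that the $\ln(b^2/4)$ in \eqref{int4a.} and the combination $1 + 2|\alpha|(1 - \psi(1+|\alpha|))$ come out with the right constants. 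A useful cross-check is the half-integer reduction: although \eqref{int4a.} is stated for $\alpha \in \zz$, the structural form $(-1)^\alpha b^{-2}(1 + |\alpha|\ln(b^2/4) + \cdots)$ should be consistent with the $\alpha \to m$ limit of $\pi\alpha/(a^2\sin\pi\alpha)$, namely $\lim_{\alpha\to m}\frac{\pi\alpha}{\sin\pi\alpha} = (-1)^m m \cdot(\text{something})$, and the logarithm must be produced entirely by the anomaly; verifying the $\alpha = 1$ case against the $d = 4$ discussion in the introduction gives additional confidence. Once the two Laurent expansions are in hand, assembling \eqref{integral2a.} and \eqref{int4a.} is routine.
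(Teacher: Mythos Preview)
Your proposal is correct and follows essentially the same dimensional-regularization strategy as the paper: pass to the variable $u=r^2$, analytically continue \eqref{integral1}, extract the finite part at the integer, and subtract the anomaly correction $f'_{|m|-1}$ coming from the $I_{-|\alpha|}I_{-|\alpha|}$ cross term, then take $b\to a$ via \hospital{} for \eqref{int4a.}. One small clarification that will save you some confusion in the bookkeeping: the $\psi(1+k)+\psi(|\alpha|-k)$ terms in \eqref{integral2a.} arise \emph{entirely} from the anomaly derivative $f'_{|m|-1}$ (via $\partial_\alpha\Gamma(-\alpha-k)$), while the finite part of $\frac{\pi}{\sin\pi\alpha}\frac{(a/b)^\alpha-(b/a)^\alpha}{a^2-b^2}$ at the integer contributes only the logarithms---so there is no need to hunt for digammas in step~(i); the paper also streamlines the calculation by first multiplying $K_\alpha(ar)K_\alpha(br)$ by $(ab/4)^{-\alpha}$.
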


\begin{proof}
Both sides of  \eqref{integral2a.} are invariant with respect to
the sign flip of $\alpha$, so it is enough to consider
$\alpha=-m$ with $m\in\nn$.
We will use dimensional regularization. 

We change integration variables to $r^2$ (using Proposition
\ref{prop:genInts_scaling}). Let
\begin{align} f(r, \alpha):=\Big(\frac{ab}{4}\Big)^{-\alpha} K_\alpha(ar) K_\alpha(br).\end{align}
The generalized integral of $f(\cdot, \alpha)$ is non-anomalous for non-integer $\alpha$, so from \eqref{integral1}
\begin{align}\label{resiu}
\gen\int_0^\infty f(r,\alpha)\dd r^2=\frac{\pi}{\sin\pi\alpha}
\frac{\big(\frac{b}{2}\big)^{-2\alpha}-\big(\frac{a}{2}\big)^{-2\alpha}}{a^2-b^2}.
\end{align}

\eqref{resiu} has a simple pole at $\alpha =-m$, with residue and finite part given by
\begin{align}
\lim_{\alpha \to -m} (\alpha+m) \gen \int_0^\infty f(r,\alpha)\dd
  r^2 &= (-1)^m \frac{\big(\frac{b}{2}\big)^{2m}-\big(\frac{a}{2}\big)^{2m}}{a^2-b^2}, \\
\fp_{\alpha \to -m} \gen \int_0^\infty f(r,\alpha)\dd
  r^2 &= (-1)^m2\frac{\big(\frac{a}{2}\big)^{2m}\ln\big(\frac{a}{2}\big)
  -\big(\frac{b}{2}\big)^{2m}\ln\big(\frac{b}{2}\big)}{a^2-b^2}.
  \end{align}
By \eqref{macdo1}, all terms of $f(r, \alpha)$ singular for non-integer $\alpha$ with $\re(\alpha)<0$ are in
  \begin{align}
& \Big(\frac{ab}{4}\Big)^{-\alpha}\frac{\pi^2
                   }{4\sin^2\pi \alpha} I_{\alpha}(ar) I_{\alpha}(br)
   \label{singi} \\
    =&\frac{ 1}{4}\Bigg(\sum_{k=0}^\infty 
       \frac{\big(\frac{a}{2}\big)^{2k}r^{2k+\alpha}(-1)^k\Gamma(-\alpha-k)}{k!}\Bigg)
       \Bigg(\sum_{j=0}^\infty 
       \frac{\big(\frac{b}{2}\big)^{2j}r^{2j+\alpha}(-1)^j\Gamma(-\alpha-j)}{j!}\Bigg).
  \notag  \end{align}
The coefficient  
of \eqref{singi} at $(r^2)^{\alpha+m-1}$ is
\begin{align}
f_{m-1}(\alpha)=\frac{(-1)^{m-1}}{4}
\sum_{k=0}^{m-1}\frac{\big(\frac{a}{2}\big)^{2k}
  \big(\frac{b}{2}\big)^{2m-2-2k}\Gamma(-\alpha-k)\Gamma(-\alpha-m+1+k)}{k!(m-1-k)!}.
\end{align}
We calculate its derivative
\begin{align}
f_{m-1}'(-m)=\frac{(-1)^{m-1}}{4}
\sum_{k=0}^{m-1}\Big(\frac{a}{2}\Big)^{2k}
  \Big(\frac{b}{2}\Big)^{2m-2-2k}\big(-\psi(1+k)-\psi(m-k)\big).
  \end{align}
Formula \eqref{integral2a.} follows from
\eqref{eq:dimreg_anomalous}. It is easy to see that the generalized
integral of $K_\alpha (ar) K_\alpha (br)2r$ is a holomorphic function
of $b$, even in the anomalous case. Therefore, \eqref{int4a.} may be
obtained from \eqref{integral2a.} using the \hospital{} rule and 
identity \eqref{induction}.
\end{proof}

  \subsection{Poisson-type integral representations}

There exist two basic kinds of integral representations of solutions
of the Bessel equation. Representations similar to or derived from
\eqref{basset} are sometimes called  {\em Bessel-Schl\"afli--type}.
There exists also another family of integral representations, sometimes
called {\em Poisson-type}:
  \begin{align}
I_\alpha (r)&=\frac{1}{\sqrt{\pi}\Gamma\Big(\alpha +{\frac12}\Big)}\Big(\frac{r}{2}\Big)^\alpha 
\int_{-1}^1
(1-t^2)^{\alpha -{\frac12}}
\e^{\pm rt}\dd t, \ \  \re(\alpha )>-\frac12, \label{poi} \\
         K_\alpha (r)
  &=
  \frac{\sqrt{\pi} \left(\frac{r}{2}\right)^{\alpha }}{\Gamma(\alpha +\frac12)}\int_1^\infty\e^{-sr}(s^2-1)^{\alpha -\frac12}\dd
  s, \ \ \re(\alpha )>-\frac12, \ \re(r) > 0, \label{poiss1} \\
  K_\alpha (r) &=\frac{\Gamma(\alpha +\frac12)}{  2\sqrt\pi} \Big(\frac{r}{2}\Big)^{-\alpha }
  \int_{-\infty}^\infty\e^{-\ii sr}(s^2+1)^{-\alpha -\frac12}\dd s,\ \  \re(\alpha )>0, \   r> 0. \label{poiss2} 
\end{align}

\subsection{Standard Bessel equation}

The \emph{(standard) Bessel equation} is obtained by setting
$r\to\pm\ii r$ in the modified one:
\begin{align}
\left(\partial_r^2+\frac{1}{r}\partial_r-\frac{\alpha ^2}{r^2}+1\right)v(r)& = 0.
\label{lap6}
\end{align}

We regard the modified Bessel equation as more basic than the standard Bessel equation because its standard solutions holomorphic on $\cc \setminus ]- \infty,0]$ have simpler properties than those of the standard Bessel equation. The latter is useful mostly with $r$ restricted to $\rr_+$.

We have several kinds of standard solutions of (\ref{lap6}). The most important is 
the {\em Bessel function}, defined as
\begin{align}
J_\alpha (r)&=\e^{\pm\ii\pi\frac{ \alpha }{2}}I_\alpha (\mp\ii r).
\end{align}
The two {\em Hankel functions} also solve (\ref{lap6}):
\begin{align}\label{hanklo}
H_\alpha ^{\pm}(r)&=\frac{2}{\pi}\e^{\mp\ii\frac\pi2(\alpha +1)}
K_\alpha (\mp
\ii r).
\end{align}
We have an integral representation
\begin{align}
H_\alpha ^\pm(r)
&=\pm\frac{1}{\pi \ii}  \e^{\mp\ii\frac{\pi}{2}\alpha }\int_0^\infty
 \exp\left(\pm\ii\frac{r}{2}(s+s^{-1})\right)
s^{\alpha -1} \dd s, \ \ \pm \im(r) >0.
\label{hankel}
\end{align}

\begin{remark}
In the literature the usual  notation for Hankel functions is
\begin{align}
H_\alpha ^{(1)}(r)=H_\alpha ^+(r),\quad
H_\alpha ^{(2)}(r)=H_\alpha ^-(r).\end{align}
\end{remark}

Note the identities
\begin{align}
J_\alpha (r)&=\frac{1}{2}\left(H_\alpha ^{+}(r)+H_\alpha ^{-}(r)\right),\label{ours2}\\
J_{-\alpha }(r)&
=\frac{1}{2}\left(\e^{\alpha \pi \ii}H_\alpha ^{+}(r)+
\e^{-\alpha \pi \ii}H_\alpha ^{-}(r)\right),\\
\e^{\mp \alpha \pi \ii}H_{-\alpha }^{\pm}(r)  =
H_\alpha ^{\pm}(r)&=
\pm\frac{\ii\e^{\mp \alpha \pi \ii}J_\alpha (r)-\ii J_{-\alpha }(r)}
{\sin \alpha \pi}.
\end{align}

We have recurrence relations
\begin{align}
\left( \pm \frac{1}{r}\partial_r \right)^n r^{\pm \alpha } J_\alpha (r)
&=r^{ \pm \alpha -n}J_{\alpha  \mp n}(r).
  \end{align}
In the above recurrence relations one may replace $J_\alpha (r)$ with $H_\alpha ^\pm(r)$.

\subsection{Special values of parameters}

For $\alpha =0,1,2,\dots,$ we have
\begin{align}
  (-1)^\alpha J_{-\alpha }(r)=J_\alpha (r)&=  \sum_{n=0}^\infty\frac{(-1)^n\left(\frac{r}{2}\right)^{2n+\alpha }}
  {n!(n+\alpha )!},
\end{align}
\begin{align}
 (-1)^\alpha H_{-\alpha }^\pm(r)& = H_\alpha ^\pm(r)=
 \mp\frac{\ii}{\pi}\sum_{k=0}^{\alpha -1}
\Big(\frac{r}{2}\Big)^{2k-\alpha }
\frac{(\alpha -k-1)!}{k!}
  \\
&
\mp\frac{\ii}{\pi}\sum_{j=0}^\infty
\frac{(-1)^j\big(H_j+H_{\alpha +j}-2\gamma_\mathrm{E}\pm\ii\pi-2\ln(\frac{r}{2})\big)}{j!(\alpha +j)!}\Big(\frac{r}{2}\Big)^{2j+\alpha }
.\notag
\end{align}

For $k=0,1,2,\dots$ we have
\begin{align}
J_{\frac12+k}(r)&=\Big(\frac2\pi\Big)^{\frac12}r^{\frac12+k}\Big(-\frac1r\partial_r\Big)^k\frac{\sin r}{r}, \\
  J_{-\frac12-k}(r)&=\Big(\frac2\pi\Big)^{\frac12}r^{\frac12+k}\Big(\frac1r\partial_r\Big)^k\frac{\cos
                     r}{r}, \\
  H_{-\frac12-k}^\pm(r)=\pm\ii(-1)^kH_{\frac12+k}^\pm(r)&=\Big(\frac2\pi\Big)^{\frac12}r^{\frac12+k}\Big(\frac1r\partial_r\Big)^k\frac{\e^{\pm\ii r}}{r}.
\end{align}

\section{The Gegenbauer equation}

\label{sec:gegenbauer}
\subsection{Gegenbauer functions}

Here is the {\em Gegenbauer equation}:
\begin{align}
\Bigg((1-w^2)\partial_w^2-2(1+\alpha )w\partial_w
+\lambda ^2-\Big(\alpha +\frac{1}{2}\Big)^2\Bigg)f(w)=0.\label{gege0}
\end{align}
Its solutions can be expressed in terms of the Gauss hypergeometric
function $F(a,b;c;z)$. We will often use this function with the
so-called {\em Olver's normalization} 
\begin{align} 
{\bf F}(a,b;c;z):=\frac{F(a,b;c;z)}{\Gamma(c)}=\sum_{n=0}^\infty\frac{(a)_n(b)_nz^n}{\Gamma(c+n)n!}.
\end{align} 
The defining series converges only in the unit disc, but ${\bf F}(a,b;c;z)$ extends to a~holomorphic function on $\cc \backslash [1 , \infty[$ (as well as on a universal cover of $\cc \backslash \{ 0, 1 \}$, but we will not use the latter point of view here). 

In what follows complex power functions should be interpreted as their
principal branches (holomorphic on $\cc \backslash ]- \infty , 0
]$). For example $w \mapsto (1-w)^\alpha$ is holomorphic away from $[1
,\infty[$. In addition, we will frequently use the notation
\begin{equation}
 (w^2-1)^\alpha_\bullet := (w-1)^\alpha(w+1)^\alpha.  
\end{equation}
The function $(w^2-1)^\alpha_\bullet$ is holomorphic on $\cc\backslash]-\infty,1]$,
whereas $ (w^2-1)^\alpha$ is holomorphic on $\cc\backslash\big([-1,1]\cup\ii\rr\big)$.
One has $(w^2-1)^\alpha_\bullet = (w^2-1)^\alpha$ only for
$\re(w)>0$. However, $(1-w^2)^\alpha = (1-w)^\alpha (1+w)^\alpha$ for
all $w\not\in]-\infty,-1]\cup[1,\infty[$.

Standard solutions of the Gegenbauer equations are characterized by simple behavior at one of the three singular points $1, -1 , \infty$. Due to the $w\mapsto -w$ symmetry of the equation \eqref{gege0}, solutions of the second type are obtained from solutions of the first type by negating the argument. Therefore we consider 4 functions, corresponding to 2 behaviors at $1$ and 2 behaviors at $\infty$. All of them are holomorphic on $\cc \backslash ]- \infty, 1 ]$.

\begin{itemize}
\item The solution characterized by asymptotics $\sim 1$ at $1$:
\begin{alignat}{2}\label{solu1}
    S_{\alpha ,\pm \lambda }(w)&
  := F\Big(\frac12+\alpha+\lambda,\frac12+\alpha-\lambda;\alpha+1;\frac{1-w}{2}\Big)\\
 &= \left( \frac{2}{w+1} \right)^\alpha F\Big(\frac12+\lambda,\frac12-\lambda;\alpha+1;\frac{1-w}{2}\Big). \label{solu1_form2}
\end{alignat}
$S_{\alpha, \lambda}$ is distinguished among the four solutions
introduced here by the fact that it is holomorphic on $\cc \backslash
] - \infty, -1 ] $ rather than only on $\cc \backslash ]- \infty, 1
]$.
The equality of \eqref{solu1} and \eqref{solu1_form2} is a
  special case of the well-known {\em Euler identity}.
On the right half-plane we have an alternative expression  obtained by
a quadratic transformation:
\begin{align}
S_{\alpha, \lambda}(w) = F\Big(\frac14+\frac\alpha2+\frac\lambda2,
  \frac14+\frac\alpha2-\frac\lambda2;\alpha+1
 ;1-w^2\Big), \qquad \re(w) > 0 .
\end{align}
\item The solution $\frac{2^{2\alpha}}{(w^2-1)^{\alpha}_\bullet} S_{-\alpha ,\lambda }(w)$ is characterized by asymptotics $\sim \frac{2^\alpha}{(w-1)^{\alpha}}$ at $1$.
\item The solution characterized by asymptotics $\sim w^{-\frac12-\alpha-\lambda}$ at $\infty$:
\begin{alignat}{2}\label{solu3}
 Z_{\alpha ,\lambda }(w)
 &=
 ( w \pm 1)^{-\frac12-\alpha -\lambda } F
\Big(
\frac12+\lambda,\frac12+\lambda+\alpha;1+2\lambda;\frac{2}{1\pm w}\Big)\\\notag
&=
w^{-\frac12-\alpha -\lambda } F\Big(\frac14+\frac\alpha2+\frac\lambda2,
\frac34+\frac\alpha2+\frac\lambda2;1+\lambda;
\frac1{w^2}\Big).
\end{alignat}
\item The solution $Z_{\alpha, - \lambda}(w)$ is characterized by asymptotics $\sim w^{-\frac12-\alpha+\lambda}$ at $\infty$.

\end{itemize}

All these 4 functions can be expressed in terms of $
S_{\alpha,\lambda}$, but for typographical reasons it is
convenient to introduce also $ Z_{\alpha,\lambda}$. We will often use
Olver's normalization:
\begin{align}
{\bf
  S}_{\alpha,\lambda}(w):=\frac{1}{\Gamma(\alpha+1)}S_{\alpha,\lambda}(w),\qquad
{\bf
  Z}_{\alpha,\lambda}(w):=\frac{1}{\Gamma(\lambda+1)}Z_{\alpha,\lambda}(w).
\end{align}
We note the identities
\begin{align}
\label{eq:identities_SZ_signs}
  {\bf S}_{\alpha,\lambda}(w)={\bf S}_{\alpha,-\lambda}(w),&\quad
     {\bf Z}_{\alpha,\lambda}(w)=\frac{ {\bf Z}_{-\alpha ,\lambda }(w)}{(w^2-1)^\alpha_\bullet}
  \end{align}
as well as the slightly more subtle {\em Whipple transformations}:
  \begin{align}
 {\bf Z}_{\alpha ,\lambda }(w)&:=  (w^2-1)^{-\frac14-\frac\alpha2 
                                -\frac\lambda2}_\bullet{\bf S}_{\lambda 
                                ,\alpha 
                                }\left(\frac{w}{(w^2-1)^{\frac12}_\bullet}\right), \label{eq:Whipple1} \\
 {\bf S}_{\alpha ,\lambda }(w)&:=  (w^2-1)^{-\frac14-\frac\alpha2 
                                -\frac\lambda2}_\bullet{\bf Z}_{\lambda 
                                ,\alpha 
                                }\left(\frac{w}{(w^2-1)^{\frac12}_\bullet}\right), \qquad \re(w) >0. \label{eq:Whipple2}
  \end{align}
    Note that $w\mapsto\frac{w}{(w^2-1)_\bullet^{\frac12}}$, defined on $\cc \setminus [-1, 1 ]$, is a holomorphic double cover of $\{ z \in \cc \setminus [-1,1] \, | \, \re(z) > 0 \}$. One has $f(-w)=f(w)$ for $w \not \in [-1,1]$ and $f(f(w))= \sgn(\re(w)) w$ for $w \not \in [-1,1 ] \cup \ii \rr$. The reason why \eqref{eq:Whipple2} (but not \eqref{eq:Whipple1}) holds only on
the right half-plane is that ${\bf Z}_{\lambda ,\alpha }(w)$ has a~branch cut on $[-1,1]$, implying that the right hand side of
\eqref{eq:Whipple2} is discontinuous on $\ii \rr$.

Here are the connection formulas:
\begin{align}\label{formu2}
 & {\bf S}_{\alpha,\lambda}(-w) \\ \notag
=&-\frac{\cos(\pi\lambda)}{\sin(\pi\alpha)}
{\bf S}_{\alpha,\lambda}(w)
+\frac{2^{2\alpha}\pi}{\sin(\pi\alpha) \Gamma(\frac12+\alpha+\lambda)
\Gamma(\frac12+\alpha-\lambda)}
\frac {{\bf S}_{-\alpha,-\lambda}(w)}{(1-w^2)^{\alpha}},\\\label{formu1}
&{\bf Z}_{\alpha,\lambda}(w) \\ \notag
=&-\frac{2^{\lambda-\alpha-\frac12}\sqrt{\pi}}{\sin(\pi\alpha)\Gamma(\frac12-\alpha+\lambda)}
{\bf S}_{\alpha,\lambda}(w) 
+\frac{2^{\lambda+\alpha-\frac12}\sqrt{\pi}}{\sin(\pi\alpha)\Gamma(\frac12+\alpha+\lambda)}
\frac {{\bf S}_{-\alpha,-\lambda}(w)}{(w^2-1)^{\alpha}_\bullet}.\end{align}

 Using the connection formulas \eqref{formu2} and \eqref{formu1} 
 (and for $\alpha\in\zz$ in addition the \hospital{} rule, see also 
 \eqref{integg} and \eqref{integ}) we derive the asymptotics
of the Gegenbauer functions
  near $w=1$   for  $\re(\alpha)>0$:
  \begin{align}
\mathbf{S}_{\alpha,\lambda}(-w)\simeq&\frac{2^{\alpha}\Gamma(\alpha)}{\Gamma(\frac12+\alpha+\lambda) \Gamma(\frac12+\alpha-\lambda)(1-w)^\alpha},\\
\mathbf{Z}_{\alpha,\lambda}(w)\simeq&\frac{2^{-\frac12+\lambda}\Gamma(\alpha)}{\sqrt\pi\Gamma(\frac12+\alpha+\lambda)(w-1)^\alpha}.\end{align}

\subsection{Recurrence relations}
The Gegenbauer functions satisfy various recurrence relations, whose
proof is straightforward:
\begin{align*}
\partial_w {\bf S}_{\alpha ,\lambda }(w)
&=-\frac{1}{2}\Big(\Big(\frac12+\alpha\Big)^2 -\lambda^2 \Big)
 {\bf S}_{\alpha +1,\lambda }(w),\\
\left((1-w^2)\partial_w -2\alpha w\right)
{\bf S}_{\alpha ,\lambda }(w)
&=-2{\bf S}_{\alpha -1,\lambda }(w) ,\\[2ex]
\left((1-w^2)\partial_w -\Big(\frac12+\alpha +\lambda \Big) w\right){\bf S}_{\alpha ,\lambda }(w)
&=-\Big(\frac12+\alpha +\lambda \Big) {\bf S}_{\alpha ,\lambda +1}(w),
\\
\left((1-w^2)\partial_w -\Big(\frac12+\alpha -\lambda \Big) w\right){\bf S}_{\alpha ,\lambda }(w)
&=-\Big(\frac12+\alpha -\lambda \Big){\bf S}_{\alpha ,\lambda
    -1}(w);
\end{align*}
\begin{align*}
  \partial_w {\bf Z}_{\alpha ,\lambda }(w)&=-\Big(\frac12+\alpha +\lambda \Big)
 {\bf Z}_{\alpha +1,\lambda }(w),\\
\left((1-w^2)\partial_w -2\alpha w\right)
{\bf Z}_{\alpha ,\lambda }(w)
&=\Big(\frac12-\alpha+\lambda\Big){\bf Z}_{\alpha -1,\lambda }(w) ,\\[2ex]
\left((1-w^2)\partial_w -\Big(\frac12+\alpha +\lambda \Big) w\right){\bf Z}_{\alpha ,\lambda }(w)
&=-\frac12 \Big(\Big(\frac12+\lambda\Big)^2 -\alpha^2 \Big) {\bf Z}_{\alpha ,\lambda +1}(w),
\\
\left((1-w^2)\partial_w -\Big(\frac12+\alpha -\lambda \Big) w\right){\bf Z}_{\alpha ,\lambda }(w)
&=2{\bf Z}_{\alpha ,\lambda -1}(w).
\end{align*}
\begin{remark}
It might be interesting to note that the above recurrence relations
correspond to short roots of the Lie algebra $so(5)$, as explained in
\cite{De1,De2}.
\end{remark}

\subsection{Gegenbauer polynomials}

In the literature one can find two kinds of polynomials related to the
functions $S_{\alpha,\lambda}$. The
{\em Jacobi polynomials} with $\alpha=\beta$ are defined
as
\begin{align}
P_n^{\alpha,\alpha}(w)&:=
\frac{(\alpha+1)_n}{n!}F\Big(-n,n+2\alpha+1;1+\alpha;\frac{1-w}{2}\Big) 
\\&=
\frac{\Gamma(\alpha+1+n)}{n!}{\bf S}_{\alpha,\frac12+\alpha+n}(w). \notag
\end{align}
However, one usually prefers the {\em Gegenbauer polynomials}
\begin{align}\label{gego}
C_n^{\alpha+\frac12}(w)&:=\frac{(2\alpha+1)_n}{(\alpha+1)_n}P_n^{\alpha,\alpha}(w)\\&=
\frac{(2\alpha+1)_n}{n!}F\Big(-n,n+2\alpha+1;1+\alpha;\frac{1-w}{2}\Big) . \notag
\end{align}
Note that 
\begin{align} P_n^{\alpha,\alpha}(-w)=(-1)^n 
P_n^{\alpha,\alpha}(w),\quad
C_n^{\frac12+\alpha}(-w)=(-1)^n 
C_n^{\frac12+\alpha}(w).
\label{jacoo}
\end{align}

We have the well-known special cases:
the {\em Legendre polynomials}
\begin{align} P_n(w)=P_n^{0,0}(w)=\mathbf{S}_{0,\frac12+n}(w);\end{align}
and the {\em Chebyshev polynomials of the first and second kind}:
    \begin{align}
      T_n(w)&=\frac{n!}{(\frac12)_n}P_n^{-\frac12,-\frac12}(w)=\sqrt\pi\mathbf{S}_{-\frac12,n}(w)\\&=
      \frac12\Big(\big(w+\ii\sqrt{1-w^2}\big)^n 
    +\big(w - \ii\sqrt{1-w^2}\big)^n\Big), \notag \\
      U_n(w)&=\frac{n!}{(\frac32)_n}P_n^{\frac12,\frac12}(w)=C_n^1(w)=\frac{n+1}{2}\sqrt{\pi}\mathbf{S}_{\frac12,n+1}(w)
      \\&= \frac1{2\ii\sqrt{1-w^2}}\Big(\big(w+\ii\sqrt{1-w^2}\big)^{n+1} -\big(w-\ii\sqrt{1-w^2}\big)^{n+1}\Big). \notag
\end{align}

\subsection{Degenerate case}

The case $\alpha\in\zz$
is called degenerate, because then the standard solutions characterized by asymptotics at $1$ become linearly dependent. More precisely, for $\alpha \in \nn$ we have 
\begin{align}
\label{eq:S_relation_degenerate2}
 {\bf S}_{\alpha ,\lambda}(w) &=\frac{2^{2\alpha}}{(\tfrac12 + \lambda)_{ \alpha}(\tfrac12 -\lambda)_{ \alpha} (1-w^2)^{\alpha}}{\bf S}_{-\alpha ,\pm\lambda}(w)
\end{align}

The functions ${\bf S}_{\alpha,\lambda}(-w)$ and  ${\bf Z}_{\alpha,\lambda}(w)$ with $\alpha \in \nn$ have the following expansions in the disc $|w-1| < 2$, involving a logarithmic singularity near~$1$: 
\begin{align}\label{integg}
  &\Gamma (\tfrac12 + \lambda + \alpha) \Gamma (\tfrac12 - \lambda + \alpha)
    \mathbf{S}_{\alpha, \lambda} (-w)\\\notag
= & \left( \frac{1-w}{2} \right)^{-\alpha}\sum_{k=0}^{\alpha-1} \frac{(\frac12 +\lambda - k)_{2k} (\alpha-k-1)!}{k! } \left( \frac{1-w}{2} \right)^k \\
&+ \sum_{j=0}^\infty \frac{(\frac12 + \lambda-\alpha-j)_{2\alpha+2j} }{j! (j+\alpha)! } \left( \frac{w-1}{2} \right)^j 
\Big( \psi(1 + \alpha + j) +\psi(1+j) 
\notag \\ 
&- \psi(\tfrac12 + \lambda + \alpha+j) 
- \psi(\tfrac12 -\lambda +\alpha + j) 
- \ln \Big(\frac{1-w}{2}\Big) \Big). \notag
\end{align}
\begin{align}\label{integ}
&\sqrt{2\pi} \, (-1)^{\alpha}2^{\alpha - \lambda} \Gamma (\tfrac12 + \lambda + \alpha)\mathbf{Z}_{\alpha, \lambda} (w)\\\notag
= & \left( \frac{1-w}{2} \right)^{-\alpha}\sum_{k=0}^{\alpha-1} \frac{(\frac12 +\lambda - k)_{2k} (\alpha-k-1)!}{k! } \left( \frac{1-w}{2} \right)^k \\
&+ \sum_{j=0}^\infty \frac{(\frac12 + \lambda-\alpha-j)_{2\alpha+2j} }{j! (j+\alpha)! } \left( \frac{w-1}{2} \right)^j 
\Big( \psi(1 + \alpha + j) +\psi(1+j) 
\notag \\ 
&- \psi(\tfrac12 + \lambda + \alpha+j) 
- \psi(\tfrac12 +\lambda -\alpha - j) 
- \ln \Big(\frac{w-1}{2}\Big) \Big). \notag
\end{align}
Let us sketch a proof of
\eqref{integ}. Using the connection formula \eqref{formu1} we can write
\begin{align*}
&\sqrt{2\pi} \, 2^{\alpha - \lambda} \Gamma (\tfrac12 + \lambda + \alpha)\mathbf{Z}_{\alpha, \lambda} (w) 
\\=&\frac{\pi}{\sin(\pi\alpha)}\Bigg(-\frac{\Gamma(\frac12+\alpha+\lambda)}{\Gamma(\frac12-\alpha+\lambda)}{\bf F}\Big(\tfrac12+\alpha+\lambda,\tfrac12+\alpha-\lambda;\alpha+1;\frac{1-w}{2}\Big)\\
&+\frac{2^{2\alpha}}{(w^2-1)_\bullet^\alpha}\Big(\frac{2}{w+1}\Big)^{-\alpha} {\bf F}\Big(\tfrac12+\lambda,\tfrac12-\lambda;-\alpha+1;\frac{1-w}{2}\Big) \Bigg) \\
 = &\frac{\pi}{\sin(\pi\alpha)}\Bigg(-\sum_{j=0}^\infty\frac{(-1)^j\Gamma(\frac12+\alpha+
    \lambda+j)}{\Gamma(\frac12-\alpha+\lambda-j)\Gamma(\alpha+1+j)j!}\Big(\frac{1-w}{2}\Big)^j\\
&  +\Big(\frac{2}{w-1}\Big)^\alpha \sum_{k=0}^\infty\frac{(\frac12+\lambda)_k (\frac12-\lambda)_k}{\Gamma(-\alpha+1+k)k!}\Big(\frac{1-w}{2}\Big)^k\Bigg).
\end{align*}
Then we apply the \hospital{} rule. We proceed similarly for \eqref{integg}.

\subsection{Half-integer case}
\label{ssc:half_integer_Gegenbauer}

If $\alpha$ is a half-integer, then the Gegenbauer functions can be
expressed in terms of elementary functions. They are particularly
simple for $\alpha=-\frac12$ and $\alpha=\frac12$:
  \begin{align} 
  \label{eq:S_minus_half}
\sqrt\pi{\bf S}_{-\frac12,\lambda}(w)
=S_{-\frac12,\lambda}(w)
&=
\frac{\big(w+\ii\sqrt{1-w^2}\big)^\lambda
      +\big(w-\ii\sqrt{1-w^2}\big)^\lambda}{2},
\\ 
\frac{\sqrt\pi}{2}{\bf 
    S}_{\frac12,\lambda}(w)=S_{\frac12,\lambda}(w)&=\frac{\big(w+\ii\sqrt{1-w^2}\big)^\lambda 
                                                    -\big(w-\ii\sqrt{1-w^2}\big)^\lambda}{2\lambda\ii\sqrt{1-w^2}},
\\
\Gamma(1+\lambda)    \mathbf{Z}_{-\frac12,\lambda}(w)=   Z_{-\frac12,\lambda}(w)
    &=2^\lambda \big((w^2-1)_\bullet^{\frac12}+w\big)^{-\lambda},\\
\Gamma(1+\lambda)   \mathbf{ Z}_{\frac12,\lambda}(w)=  Z_{\frac12,\lambda}(w)
    &= 2^\lambda \frac{(w^2-1)_\bullet^{\frac12}+w\big)^{-\lambda}}{(w^2-1)_\bullet^{\frac12}}.    
    \end{align}

    Using the recurrence relations we can find expressions for
    Gegenbauer functions
    with $\alpha\in\frac12+\zz$. More precisely, for $n=0,1,\dots$, we have 
\begin{align}
 \mathbf{S}_{-\frac12-n,\lambda}(w)
 = &\frac{(1-w^2)^{\frac12+n}}{2\sqrt\pi(-2)^n}\partial_w^n
 \frac{\big(w+\ii\sqrt{1-w^2}\big)^\lambda 
     +\big(w-\ii\sqrt{1-w^2}\big)^\lambda}{\sqrt{1-w^2}},\\
   \mathbf{S}_{\frac12+n,\lambda}(w) 
 = &\frac{2^{n}}{\ii\sqrt\pi(\lambda-n)_{2n+1}}
 \partial_w^n 
 \frac{\big(w+\ii\sqrt{1-w^2}\big)^\lambda 
          -\big(w-\ii\sqrt{1-w^2}\big)^\lambda}{\sqrt{1-w^2}} ,
      \end{align}
      \begin{align}
       \mathbf{Z}_{-\frac12-n,\lambda}(w) 
        =&\frac{(-1)^n 2^\lambda (w^2-1)_\bullet^{\frac12+n}}{\Gamma(1+\lambda+n)}\partial_w^n 
           \frac{\big((w^2-1)_\bullet^{\frac12}+w\big)^{-\lambda}}{(w^2-1)_\bullet^{\frac12}},\\
               \mathbf{Z}_{\frac12+n,\lambda}(w) 
        =&\frac{(-1)^n 2^\lambda}{\Gamma(1+\lambda+n)}\partial_w^n 
           \frac{\big((w^2-1)_\bullet^{\frac12}+w\big)^{-\lambda}}{(w^2-1)_\bullet^{\frac12}}.
        \end{align}

  \subsection{Integral representations}
  In this subsection we describe the four basic types of integral
representations of the Gegenbauer functions ${\bf S}_{\alpha,\lambda}$ and ${\bf Z}_{\alpha,\lambda}$.
    In order to understand their form it is useful to start with
    recalling some basic facts about the hypergeometric equation.

    The hypergeometric equation has 6 standard solutions, which are
traditionally presented in {\em Kummer's table}. To represent these
solutions  in a form as symmetric as possible, instead of the usual
parameters $a,b,c$ it is convenient to use the parameters
$\alpha=c-1$, $\beta=a+b-c$, $\mu=a-b$. Various elements of Kummer's
table correspond  to permutations of $\alpha,\beta,\mu$ and
switching their signs, as explained e.g. in \cite{De1,De2}.

The hypergeometric function ${\bf F}(a,b;c;z)$
is of course one of standard solutions of the hypergeometric equation.
For $\re(1+\alpha)>|\re(\beta\mp\mu)|$ it satisfies
the following pair of identities  known already to Euler
(see e.g. \cite{De1,De2} and \cite[Eq.~(15.6.1)]{NIST}):
\begin{align}
 & \mathbf{F}\Big(\frac{1+\alpha+\beta+\mu}{2}, \frac{1+\alpha+\beta-\mu}{2};\alpha+1;z\Big)\label{integ1} \\ \notag
  = \frac{1}{\Gamma(\frac{1+\alpha+\beta\mp\mu}{2})
     \Gamma(\frac{1+\alpha-\beta\pm\mu}{2})}
& \int_1^\infty
     t^{\frac{-1-\alpha+\beta\pm\mu}{2}}(t-1)^{\frac{-1+\alpha-\beta\pm\mu}{2}}(t-z)^{\frac{-1-\alpha-\beta\mp\mu}{2}}\dd t.
\end{align}
This may be verified for $|z|<1$ by Taylor expanding $t-z$ around $z=0$ and integrating term by term. It then follows that the identity is true everywhere because both left and right hand side are holomorphic on $\cc \setminus [1,\infty[$. Note that the change of the sign of $\mu$ does not affect the left
hand side of \eqref{integ1}, but yields two distinct integral representations on the right hand side.

By choosing intervals that join 
pairs of singular points $0,1,\infty,z$
of the integrand of \eqref{integ1}
one obtains integral representations of all $6$ standard solutions
 (see e.g. \cite{De1,De2}).
Let us quote
the
pair of such  representations of another standard solution, the first for $\re(1+\mu)>|\re(\alpha-
\beta)|$, 
the second for $\re(1+\mu)>|\re(\alpha+\beta)|$:
\begin{align}
&
 z^{\frac{-1-\alpha-\beta-\mu}{2}}\mathbf{F}\Big(\frac{1+\mu+\beta+\alpha}{2}, \frac{1+\mu+\beta-\alpha}{2};\mu+1;z^{-1}\Big)\notag\\
  =& 
\frac{1}{\Gamma(\frac{1+\mu+\beta-\alpha}{2})     \Gamma(\frac{1+\mu-\beta+\alpha}{2})}
     \int_0^1
     t^{\frac{-1-\alpha+\beta+\mu}{2}}(1-t)^{\frac{-1+\alpha-\beta+\mu}{2}}(z-t)^{\frac{-1-\alpha-\beta-\mu}{2}}\dd t, \label{integ2}\\
 &  =
  \frac{1}{\Gamma(\frac{1+\mu+\beta+\alpha}{2})\Gamma(\frac{1+\mu-\beta-\alpha}{2})}
\int_z^\infty
     t^{\frac{-1-\alpha+\beta-\mu}{2}}(t-1)^{\frac{-1+\alpha-\beta-\mu}{2}}(t-z)^{\frac{-1-\alpha-\beta+\mu}{2}}\dd t. \label{integ3}
  \end{align} 

To obtain representations of the Gegenbauer functions we need to set
 $\alpha=\beta$, $\mu=2\lambda$, $t=\frac{s+1}{2}$ in the
  above formulas.
  By setting  $z=\frac{1-w}{2}$ in \eqref{integ1}, we obtain the
  following pair of representations  valid
  for $\frac12>\mp\re(\lambda)>-\frac12-\re(\alpha)$:
  \begin{align}\label{deriv1}
  \mathbf{  S}_{\alpha,\lambda}(w)&=\frac{2^{\frac12+\alpha  \mp \lambda}}{\Gamma(\frac12+\alpha\mp\lambda)\Gamma(\frac12\pm\lambda)}
                                \int_1^\infty(s^2-1)^{-\frac12\pm\lambda}(s+w)^{-\frac12-\alpha\mp\lambda}\dd s. 
\end{align}
Setting $z=\frac{1+w}{2}$ in \eqref{integ2} we obtain the identity
valid for $\re(\lambda)+\frac12>0$:
  \begin{align}
   \mathbf{ Z}_{\alpha,\lambda}(w)&=\frac{1}{\sqrt\pi\Gamma(\frac12+\lambda)}
                                \int_{-1}^1(1-s^2)^{-\frac12+\lambda}
                                ( w-s)^{-\frac12-\alpha-\lambda}\dd 
                           s,\label{deriv}\end{align}
Setting $z=\frac{1+w}{2}$ in \eqref{integ3} yields the formula 
valid for $\re(\lambda)+\frac12>|\re(\alpha)|$: 
                         \begin{align}
   \mathbf{ Z}_{\alpha,\lambda}(w)
                           &=\frac{2^{2\lambda}
                             \Gamma(\lambda+\frac12)}{\sqrt\pi\Gamma(\frac12+\lambda-\alpha)\Gamma(\frac12+\lambda+\alpha)}
                                                           \int_{w}^\infty(s^2-1)^{-\frac12-\lambda}_\bullet(s-w)^{-\frac12-\alpha+\lambda}\dd 
                           s.\label{deriv3}
\end{align}
Derivation of \eqref{deriv} and \eqref{deriv3}
uses the duplication formula \eqref{dupli}.

Inserting  $w=\frac{z}{(z^2-1)_\bullet^{\frac12}}$ 
 and using the
Whipple transformations \eqref{eq:Whipple1}  and \eqref{eq:Whipple2}
we
  derive another family of integral representations.
The first follows by inserting $s=\frac{t-z}{(z^2-1)_\bullet^{\frac12}}$
  into \eqref{deriv1} and is valid for
$\re(\lambda)+\frac{1}{2}>-\re(\alpha)>-\frac{1}{2}$:
\begin{align}
                     \mathbf{ Z}_{\alpha,\lambda}(z) &=
                      \frac{2^{\frac12+\alpha+\lambda}}{\Gamma(\frac12+\alpha+\lambda)\Gamma(\frac12-\alpha)}                    \int\limits_{(z^2-1)^{\frac12}_{\bullet}+z}^\infty 
(t^2-2tz+1)^{-\alpha -\frac12}t^{-\frac12+\alpha -\lambda }\dd t. \label{quw2}
\end{align}
 To avoid having to discuss integration contour deformations involved in the change of variables, it is convenient to check this first for $z \in [1 , \infty[$ and then use analyticity.

Second representation follows by inserting
$s=\frac{-t+z}{(z^2-1)_\bullet^{\frac12}}$ into \eqref{deriv} and holds for $\re(\alpha)+\frac{1}{2}>0$: 
\begin{align}
\mathbf{  S}_{\alpha ,\lambda }(z)  &=
-  \frac {\ii (1-z^2)^{-\alpha }}{\sqrt{\pi}\Gamma(\frac12+\alpha)}
    \int\limits_{z-\ii\sqrt{1-z^2}}^{z+\ii\sqrt{1-z^2}}
    (t^2-2tz +1)^{\alpha -\frac12}_{\bullet}  t^{-\frac12-\alpha \pm\lambda }\dd t,
\label{quw1}
\end{align}
where $(t^2-2tz +1)^{\mu}_{\bullet} = (t-z-\ii \sqrt{1-z^2})^\mu (t-z+\ii \sqrt{1-z^2})^\mu$, and the integration contour is chosen so that it passes zero from the right without encircling it and does not cross the horizontal half-lines $ z \pm \ii \sqrt{1-z^2} - \rr_+$. Such contours varying smoothly with $z$ on $\cc \setminus (]-\infty,-1] \cup [1, \infty[)$ exist because on this region $\im(z - \ii \sqrt{1-z^2}) <0$ and $\im (z+ \ii \sqrt{1-z^2}) >0$. Since $\mathbf{  S}_{\alpha ,\lambda }(z)$ is continuous on $[1, \infty[$, the right hand side of \eqref{quw1} has the same limits as $z$ approaches an element of $[1 , \infty[$ from above or below. This is not obvious directly from \eqref{quw1}. The next representation follows by inserting
  $s=\frac{t+z}{(z^2-1)_\bullet^{\frac12}}$ into \eqref{deriv3}, and is
  valid for
  $\re(\alpha)+\frac{1}{2}>|\re(\lambda)|$:
  \begin{equation}
\mathbf{  S}_{\alpha ,\lambda }(z)=   \frac {2^{2\alpha}\Gamma(\alpha+\frac12)}{\sqrt\pi\Gamma(\frac12+\alpha-\lambda)
\Gamma(\frac12+\alpha+\lambda)}
\int\limits_0^{\infty}(t^2+2tz+1)^{-\alpha-\frac12}t^{-\frac12+\alpha-\lambda }\dd t.\label{quw3}
\end{equation}

It is convenient to make an exponential change of variables in the representations above.
Substituting $z = \cosh \theta$, with $\re(\theta)>0$, $|\im(\theta)| < \pi$, and $t=\e^\phi$ in \eqref{quw2}, we obtain (with the same restrictions on $\alpha, \lambda$):
\begin{align}
\mathbf{ Z}_{\alpha,\lambda}(\cosh\theta) &=
 \frac{2^{\lambda}}{\Gamma(\frac12+\alpha+\lambda)\Gamma(\frac12-\alpha)} \int\limits_{\theta}^\infty 
\big(\cosh\phi-\cosh\theta\big)^{-\alpha -\frac12}\e^{-\lambda\phi}\dd \phi. \label{quw2a}
  \end{align}
Under the same conditions on $\theta$, combining 
\eqref{eq:identities_SZ_signs} and \eqref{quw2a} gives
\begin{align}\label{quw2aa}
 \mathbf{Z}_{\alpha,\lambda}(\cosh\theta)                                                      &= \frac{2^{\lambda} 
 }{\Gamma(\frac12-\alpha+\lambda)\Gamma(\frac12+\alpha)(\sinh\theta)^{2\alpha}}                    \int\limits_{\theta}^\infty 
(\cosh\phi-\cosh\theta)^{\alpha -\frac12}\e^{ -\lambda\phi }\dd \phi. 
\end{align}

Substituting $t=\e^{\ii\phi}$ in \eqref{quw1} we obtain for 
$0 < \re(\theta) < \pi$
\begin{align}
\mathbf{  S}_{\alpha ,\lambda }(\cos\theta)  &=
  \frac {2^{\alpha-\frac12}}{\sqrt{\pi}\Gamma(\frac12+\alpha)(\sin\theta)^{2\alpha}}
    \int\limits_{-\theta}^\theta(\cos\phi-\cos\theta)^{\alpha -\frac12} \e^{\pm \ii \lambda\phi}\dd\phi.                  \label{quw1a}
\end{align}

Substituting $t=\e^\phi$, in
  \eqref{quw3} we get for $|\re(\theta)| < \pi$
  \begin{align}
\mathbf{  S}_{\alpha ,\lambda }(\cos\theta)&=   \frac {2^{\alpha-\frac12}\Gamma(\alpha+\frac12)}{\sqrt\pi\Gamma(\frac12+\alpha-\lambda) \Gamma(\frac12+\alpha+\lambda)}
\int\limits_{-\infty}^{\infty}(\cosh\phi+\cos\theta)^{-\alpha-\frac12}\e^{-\lambda\phi}\dd \phi.&\label{quw3a}\end{align}

\subsection{Asymptotics}

From the integral representations of the Gegenbauer functions, we can 
deduce the following asymptotics:

\begin{theorem}
\label{thm:asymptotics}
Let $\alpha \geq -\frac{1}{2}$ and $\pi>\delta>0$ be fixed. Then
we have the following estimates:
\begin{enumerate}
  \item Uniformly for $\theta\in[0,\pi-\delta]$ and $\beta\to\infty$, 
\begin{align}
\label{asym2}
 \frac{(\sin\theta)^{\alpha+\tfrac12}}{2^\alpha \theta^{\alpha+\tfrac12}}
 {\bf S}_{\alpha,\pm\ii\beta}(\cos\theta) 
 &= 
  (\theta\beta)^{-\alpha }
 I_{\alpha}(\theta\beta)
 \Big(1 + \theta O \big(\beta^{-1}\big) 
   \Big).\end{align}
 \item Uniformly for $\theta\in [0,\pi-\delta]$ and  $\beta\to\infty$, 
\begin{align} \label{asym3}
\frac{\pi\e^{-\pi\beta}(\sin\theta)^{\alpha+\frac12}
}{2^\alpha\theta^{\alpha+\frac12}}
  {\bf S}_{\alpha,\pm\ii\beta}(-\cos\theta)
  &
 =(\theta\beta)^{-\alpha} K_\alpha(\beta \theta)\big(1+O(\beta^{-1})\big).  
\end{align}
\item Uniformly
for $\theta\geq0$ and 
$\lambda\to\infty$,
\begin{align}\label{asym1}
\frac{\sqrt\pi\Gamma(\tfrac12-\alpha+\lambda)(\sinh \theta)^{\alpha+\frac12}}{2^{\lambda+\frac12}  
\theta^{\alpha+\frac12}}
{\bf Z}_{\alpha,\lambda}(\cosh\theta)  
&=
(\lambda\theta)^{-\alpha} K_\alpha(\lambda \theta)\big(1+O(\lambda^{-1})\big).
\end{align}\end{enumerate}
\end{theorem}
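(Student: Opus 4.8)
The plan is to prove all three asymptotic statements from the exponential-type integral representations \eqref{quw2a}, \eqref{quw2aa}, \eqref{quw1a} and \eqref{quw3a}, comparing them term by term with the Poisson-type integral representations \eqref{poi} and \eqref{poiss1} of $I_\alpha$ and $K_\alpha$. The unifying mechanism is that near the lower endpoint of each $\phi$-integral the integrand is well approximated by its leading behavior in $\phi$ (for instance $\cosh\phi-\cosh\theta \approx \tfrac12(\sinh\theta)\,(\phi-\theta)(\phi+\theta)$ on a suitable scale, or $\cosh\phi+\cos\theta \approx$ a quadratic near $\phi=0$), which after rescaling $\phi$ by $\beta$ or $\lambda$ reproduces exactly the Bessel/Macdonald integrand; the corrections are then $O(\beta^{-1})$ or $O(\lambda^{-1})$, uniformly in the stated $\theta$-range because $\delta>0$ keeps us away from the confluence of singular points at $\theta=\pi$.

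For item (3), I would start from \eqref{quw2aa} (valid for $\re(\alpha)+\tfrac12>|\re(\lambda)|$, and extended to all relevant $\lambda$ by the analyticity already used throughout the paper), substitute $\phi = \theta + \psi/\lambda$, and observe that $\cosh(\theta+\psi/\lambda)-\cosh\theta$ is uniformly comparable to $(\sinh\theta)\,\psi/\lambda$ for small $\psi$ and grows like $\tfrac12\e^{\theta+\psi/\lambda}$ for large $\psi$; the substitution turns the integral, up to the prefactor, into $\int_0^\infty \big((\sinh\theta)\psi/\lambda\big)^{\alpha-\frac12}\e^{-\lambda\theta}\e^{-\psi}\,\dd\psi\,(1+\text{error})$, and recognizing $\Gamma(\alpha+\tfrac12)=\int_0^\infty\psi^{\alpha-\frac12}\e^{-\psi}\dd\psi$ together with the asymptotics \eqref{asim} of $K_\alpha$ gives the claim. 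The case distinction between the two representations \eqref{quw2a} and \eqref{quw2aa} lets one cover both $\re(\alpha)>-\tfrac12$ ranges; one must be slightly careful that the error estimate is uniform for $\theta$ near $0$, where one instead rescales $\phi=\theta\e^\psi$ or compares directly with the Poisson integral \eqref{poiss1} of $K_\alpha(\lambda\theta)$ at fixed small $\lambda\theta$.

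For items (1) and (2), the same strategy applies to \eqref{quw1a} (which after $z=\cos\theta$, $t=\e^{\ii\phi}$ is an integral over the real segment $[-\theta,\theta]$) and to \eqref{quw3a}: in the first, substituting $\phi=\theta\sigma$ with $\sigma\in[-1,1]$ and noting $\cos(\theta\sigma)-\cos\theta \approx \tfrac{\theta^2}{2}(1-\sigma^2)$ for small $\theta$, and more precisely $(\sin\theta)^{-2\alpha}(\cos\phi-\cos\theta)^{\alpha-\frac12}$ is uniformly $\big(\tfrac{\theta}{\sin\theta}\big)^{?}\cdot\theta^{2\alpha-1}(1-\sigma^2)^{\alpha-\frac12}(1+\theta^2 O(1))$ on $[0,\pi-\delta]$, one matches the Poisson representation \eqref{poi} of $I_\alpha(\theta\beta)$ with $t=\sigma$ and the exponent $\e^{\pm\ii\lambda\phi}=\e^{\pm\ii(\theta\beta)\sigma}$ (taking $\lambda=\pm\ii\beta$). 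For \eqref{asym3} one uses instead that $\mathbf{S}_{\alpha,\lambda}(-\cos\theta)$ is governed by the large-$\phi$ tail of \eqref{quw3a} after the substitution $\cos\theta\to-\cos\theta$; there the factor $\e^{-\pi\beta}$ arises exactly from the $\cosh\phi$ growth competing with the connection-formula coefficient, and the integral reduces to the Poisson-type representation \eqref{poiss1} of $K_\alpha(\beta\theta)$ after $\phi\mapsto\theta+\psi/\beta$. The error bounds follow from Taylor's theorem with remainder applied to the smooth functions $\phi\mapsto (\cosh\phi-\cosh\theta)/(\phi-\theta)$ etc., and from dominated-convergence-type estimates on the tails, all uniform once $\delta>0$ is fixed.

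\textbf{Main obstacle.} The principal difficulty is making the error term genuinely \emph{uniform} across the full range $\theta\in[0,\pi-\delta]$: near $\theta=0$ the natural scale of the $\phi$-integral is $\theta$ itself (so one compares with $I_\alpha,K_\alpha$ at the bounded argument $\theta\beta$ directly, via \eqref{poi}, \eqref{poiss1}), whereas for $\theta$ bounded away from $0$ the scale is $1/\beta$ (Laplace/Watson regime). Interpolating these two regimes — and in particular bookkeeping the stated mixed error form $\theta\, O(\beta^{-1})$ in \eqref{asym2}, which is exactly what a single uniform estimate in the variable $\theta\beta$ should produce — is where the real work lies; the rest is careful but routine substitution and comparison with the Gamma integral.
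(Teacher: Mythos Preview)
Your plan for item (1) --- substituting $\phi=\theta\sigma$ in \eqref{quw1a} and comparing with the Poisson integral \eqref{poi} --- is exactly what the paper does. The paper writes the difference as an integral involving the factor
\[
\Big(\frac{\sinc(\theta\tfrac{1-s}{2})\sinc(\theta\tfrac{1+s}{2})}{\sinc\theta}\Big)^{\alpha-\frac12}-1,
\]
bounds this by $c\,\theta^2(1-|s|)$ uniformly on $[0,\pi-\delta]$, and then integrates by parts once to gain the factor $\theta/\beta$ in front of the modified Bessel integral. So far so good.

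For items (2) and (3), however, your primary substitution $\phi=\theta+\psi/\lambda$ (resp.\ $\phi\mapsto\theta+\psi/\beta$) is the classical Watson--Laplace approach, and it is precisely this choice that creates the uniformity problem you yourself identify as the main obstacle: it only produces the Bessel asymptotics \eqref{asim} and hence requires $\lambda\theta\to\infty$, forcing you into a two-regime argument near $\theta=0$. The paper sidesteps this entirely by using the \emph{multiplicative} substitution $\phi=s\theta$ also in items (2) and (3). This turns the Gegenbauer integral into an expression that matches the Poisson representation \eqref{poiss1} of $K_\alpha(\lambda\theta)$ (valid for all $\lambda\theta>0$) up to a factor of the form $\big(\tfrac{\sinhc(\theta\frac{s-1}{2})\sinhc(\theta\frac{s+1}{2})}{\sinhc\theta}\big)^{\alpha-\frac12}-1$ (resp.\ $\sinc$ for item (2)), and that factor is bounded by $c\,\theta(s-1)$ uniformly in $\theta$. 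One integration by parts then gives the $O(\lambda^{-1})$ error, with no regime splitting in $\theta$.

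There is also a concrete gap in your treatment of item (2). You propose to use \eqref{quw3a} for ${\bf S}_{\alpha,\pm\ii\beta}(-\cos\theta)$; the paper instead uses \eqref{quw1a} with $\cos\theta$ replaced by $\cos(\pi-\theta)$ and the substitution $\phi=\theta s-\pi$. This introduces a genuine difficulty you do not anticipate: the resulting integrand contains the factor $\sinc(\theta\tfrac{s+1}{2})$, which vanishes at $s=\tfrac{2\pi}{\theta}-1$, so the bound $\big|\,(\cdots)^{\alpha-\frac12}-1\,\big|\le c\,\theta(s-1)$ fails near that zero when $\alpha<\tfrac12$. The paper handles this by splitting into three cases ($\alpha\ge\tfrac32$, $\tfrac12\le\alpha\le\tfrac32$, $-\tfrac12<\alpha\le\tfrac12$) and, in the last case, controlling the contribution near the zero separately with an $O(\beta^{-\infty})$ bound coming from the exponential suppression $\e^{-\beta(2\pi-\theta s)}$. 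Your sketch does not see this issue because the additive substitution never puts the zero in view; but it is exactly what makes item (2) harder than (1) and (3).
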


\begin{remark}
 Similar statements about associated Legendre functions can for example be 
 found in \cite{CDT,Durand19b}. See also \cite[Chapter 12, §§12,13]{Olver} 
 for detailed asymptotics of Legendre functions including also
 non-leading terms.
\end{remark}

 In the proof of Theorem \ref{thm:asymptotics}, we will use the functions
  \begin{align}
  \sinc\theta:=\frac{\sin\theta}{\theta},\quad
  \sinhc\theta:=\frac{\sinh\theta}\theta.\end{align}
We note that $\sinc0=\sinhc0=1$, $\sinc$ is decreasing on $[0,\pi]$, $\sinhc$ 
and all its derivatives are increasing on $[0,\infty[$, and for $\theta \in [0, \pi]$ we have
\begin{equation}
0 \leq   \sinc \theta \leq 1 - \frac{\theta}{\pi}, \qquad - \frac13 \theta \leq  \sinc' \theta \leq 0. \label{bellow}
\end{equation}

\begin{proof} We note first that the limiting case $\alpha=-\tfrac12$ can be proved 
explicitly by using the representations of modified Bessel functions and 
Gegenbauer functions in the half-integer case, given in Sections 
\ref{ssc:half_integer_Bessel} and \ref{ssc:half_integer_Gegenbauer}, 
respectively. For the following, let us thus assume $\alpha>-\tfrac12$.
In the proof, $c$ will be the notation for various constants
independent of $\theta, s$.

Let us prove \eqref{asym2}. The integral representation \eqref{poi} implies 
\begin{align}
\label{eq:int_scaled_BesselI}
 (\theta\beta)^{-\alpha }
 I_{\alpha}(\theta\beta) 
 &= \frac{1}{\sqrt{2\pi}\Gamma\big(\alpha+{\frac12}\big)}
 \int_{-1}^1 \Big(\frac{1-s^2}{2}\Big)^{\alpha-{\frac12}}
\e^{-\theta\beta s}\dd s.
\end{align}
  We use the integral representation \eqref{quw1a}, the substitution 
$\phi=\theta s$ and the identity\\
$\cos(s\theta)-\cos\theta=2 \sin\big(\theta\tfrac{1-s}{2}\big) 
   \sin\big(\theta\tfrac{1+s}{2}\big)$
to find 
\begin{align}
\label{eq:int_scaled_GegenS}&
\frac{(\sin\theta)^{\alpha+\tfrac12}}{2^\alpha \theta^{\alpha+\tfrac12}}
 {\bf S}_{\alpha,\pm\ii\beta}(\cos\theta)\\ 
 =& \frac{1}{\sqrt{2\pi}\Gamma(\frac12+\alpha)}
   \int\limits_{-1}^{1}
   \Bigg(\frac{2 \sin\big(\theta\tfrac{1-s}{2}\big) 
   \sin\big(\theta\tfrac{1+s}{2}\big)
   }{\theta \sin \theta}\Bigg)^{\alpha -\frac12} \e^{-\theta\beta s }\dd s. \notag
\end{align}
Subtracting \eqref{eq:int_scaled_BesselI} from \eqref{eq:int_scaled_GegenS}, 
we obtain
\begin{align}
\label{asym2.0}
& \frac{(\sin\theta)^{\alpha+\tfrac12}}{2^\alpha \theta^{\alpha+\tfrac12}}{\bf S}_{\alpha,\pm\ii\beta}(\cos\theta)
-  (\theta\beta)^{-\alpha }
 I_{\alpha}(\theta\beta)
  \\\notag
  =
    \frac{1}{\sqrt{2\pi}\Gamma(\alpha+\frac12)}
   & \int_{-1}^1 \Big(\frac{1-s^2}{2}\Big)^{\alpha-{\frac12}}\Bigg(
\Big(
\frac{\sinc\big(\theta\tfrac{1-s}{2}\big) 
\sinc\big(\theta\tfrac{1+s}{2}\big)}{\sinc\theta} \Big)^{\alpha-\frac12}-1\Bigg)\e^{-s\beta\theta}\dd 
  s. \end{align}
Next,
\begin{align}\label{esto0}
  & \frac{\sinc\big(\theta\tfrac{1-s}{2}\big) 
   \sinc\big(\theta\tfrac{1+s}{2}\big)}{\sinc\theta}
 -1\\ 
  =&\Big(
\sinc\frac{(1-|s|)\theta}{2}-1\Big)\frac{\sinc\frac{(1+|s|)\theta}{2}}{\sinc\theta}
+\frac{\sinc\frac{(1+|s|)\theta}2-\sinc\theta}{\sinc\theta}=:I+II.
  \nonumber
\end{align}
Now
\begin{align*}
|I|\leq c\frac{\theta^2 (1-|s|)^2}{\sinc\theta},
\quad |II|\leq
c\frac{\theta^2(1-|s|)}{\sinc\theta},\quad\textup{hence}\quad
  |\eqref{esto0}|\leq c\frac{\theta^2(1-|s|)}{\sinc\theta}. 
\end{align*}
$II$ is estimated as follows:
\begin{align}
\Big|\sinc\frac{(1+|s|)\theta}2-\sinc\theta\Big|&\leq
\Big|\frac{(1+|s|)}{2}\theta-\theta\Big|\sup_{\phi \in [0, \theta]} |\sinc'\phi| \leq c \theta^2 (1-|s|).  \label{esto4a}
\end{align}

Hence, using
\begin{align}
|x^\kappa-1| =  \left| \kappa \int_1^x y^{\kappa -1} \dd y \right| \leq |\kappa||x-1|\max(x^{\kappa-1},1)\label{idio}\end{align}
and $\frac{1}{\sinc(c)} \leq c$, $ \frac{\sinc \big( \theta\frac{1-s}{2} \big) \sinc \big( \theta\frac{s+1}{2} \big)}{\sinc\theta} \geq c >0$ on $[0, \pi - \delta]$, we obtain 
\begin{align}
\Bigg|\Big(  \frac{\sinc \big( \theta\frac{1-s}{2} \big) \sinc \big( \theta\frac{s+1}{2} \big)}{\sinc\theta}               \Big)^{\alpha-\frac12}-1\Bigg|
  \leq&c\theta^2(1-|s|).\label{esto4}
\end{align}
Thus, applying \eqref{esto4} and then integrating by parts, we obtain
\begin{align}\label{argu1}
  |\eqref{asym2.0}|&\leq
c\theta^2\int_{-1}^1
                     \Big(\frac{1-s^2}{2}\Big)^{\alpha-{\frac12}}(1-|s|)
                     \e^{s\beta\theta}\dd s
                     \\ \notag
  &= \frac{c \theta }{\beta}  \int_{-1}^1
    \Big(\frac{1-s^2}{2}\Big)^{\alpha-{\frac12}}
    \,\frac{2\alpha s+\sgn(s)}{1+|s|} \,\e^{s\beta\theta}\dd s
    \\ \notag
  &\leq \frac{c\theta}{\beta} (\beta\theta)^{-\alpha} I_\alpha(\beta\theta).
  \end{align}
This proves \eqref{asym2}. 

We next prove \eqref{asym3}.  
 The integral representation \eqref{poiss1} gives 
\begin{align}
\label{eq:int_scaled_BesselK}
(\theta\beta)^{-\alpha} K_\alpha(\theta\beta ) 
 =  \frac{\sqrt{\pi}}{\sqrt{2}\Gamma(\tfrac12+\alpha)}
 \int_{1}^\infty \Big(\frac{s^2-1}{2}\Big)^{\alpha-\tfrac12} 
 \e^{-\theta\beta s} \dd s. 
\end{align}
The integral representation \eqref{quw1a}, the substitution 
$\phi=\theta s-\pi$ and the identity
$\cos\theta-\cos(s\theta)=2 \sin\big(\theta\tfrac{s-1}{2}\big) 
   \sin\big(\theta\tfrac{s+1}{2}\big)$ 
yield 
\begin{align}
\label{eq:int_scaled_GegenS_2}
&\frac{\pi\e^{-\pi\beta}(\sin\theta)^{\alpha+\frac12}
}{2^\alpha\theta^{\alpha+\frac12}}
  {\bf S}_{\alpha,\pm\ii\beta}(-\cos\theta)
  \\ \notag
=&  \frac{\sqrt{\pi}}{\sqrt{2}\Gamma(\tfrac12+\alpha)}
 \int_{1}^{\tfrac{2\pi}{\theta}-1}
 \Bigg(\frac{2 \sin\big(\theta\tfrac{s-1}{2}\big) 
   \sin\big(\theta\tfrac{1+s}{2}\big)
   }{\theta \sin \theta}\Bigg)^{\alpha -\frac12}
   \e^{-\theta\beta s} \dd s . 
\end{align}
Subtracting \eqref{eq:int_scaled_BesselK} from
\eqref{eq:int_scaled_GegenS_2} we obtain 
\begin{align}
 \label{asym3.}
& \frac{\pi(\sin\theta)^{\alpha+\tfrac12}\e^{-\beta\pi}}{2^\alpha \theta^{\alpha+\tfrac12}}
 {\bf S}_{\alpha,\pm\ii\beta}(-\cos\theta) -(\beta\theta)^{-\alpha}
K_\alpha(\beta \theta)\\
  =
    \frac{\sqrt{\pi}}{\sqrt{2}\Gamma(\alpha+\frac12)}
   & \int_{1}^{\frac{2\pi}{\theta}-1} \Big(\frac{s^2-1}{2}\Big)^{\alpha-{\frac12}}\Bigg(\Big(
\frac{\sinc\theta\frac{s-1}{2}\sinc\theta\frac{s+1}{2}}{\sinc\theta}
\Big)^{\alpha-\frac12}-1\Bigg) \e^{-s\beta\theta}\dd 
     s \nonumber \\
-\frac{\sqrt{\pi}}{\sqrt{2}\Gamma(\alpha+\frac12)}
& \int_{\frac{2\pi}{\theta}-1}^\infty    \Big(\frac{s^2-1}{2}\Big)^{\alpha-{\frac12}}\e^{-s\beta\theta}\dd s  \ =:\ \frac{\sqrt{\pi}}{\sqrt{2}\Gamma(\alpha + \tfrac12)} \int_1^\infty F(s)\dd s. \nonumber
\end{align}

Arguing as in the proof of \eqref{esto4a}, we obtain
\begin{align}
\Bigg| \frac{\sinc \big( \theta\frac{s-1}{2} \big) \sinc \big( \theta\frac{s+1}{2} \big)}{\sinc\theta}             -1\Bigg|
  \leq&c\theta(s-1).\label{esto4.}
\end{align}
The power of $\theta$ is worse than in \eqref{esto4},
because for $|s| > 1$ we can no longer bound $II$ by a term proportional to $\theta^2$.

 Unfortunately, due to the zero of   $\sinc\theta\tfrac{s+1}{2}$ at $s=\tfrac{2\pi}{\theta}-1$, the remaining part of the argument has to be split into 3 cases.
  
First we consider $\alpha\geq \frac32$. Using \eqref{idio} we obtain
\begin{align}
\Bigg|\Big(  \frac{\sinc \big( \theta\frac{s-1}{2} \big) \sinc \big( \theta\frac{s+1}{2} \big)}{\sinc\theta}               \Big)^{\alpha-\frac12}-1\Bigg|
  \leq&c\theta(s-1).\label{esto4a.}
\end{align}
Therefore,
\begin{align}\label{argu5}
\Big|\int_1^{\frac{2\pi}{\theta}-1}F(s)\dd s\Big|&\leq
c\theta\int_1^{\frac{2\pi}{\theta}-1}
\Big(\frac{s^2-1}{2}\Big)^{\alpha-{\frac12}}(s-1)
\e^{-s\beta\theta}\dd s.
\end{align}
Next, using $\theta(s-1)\geq2\pi-2\theta$ we obtain
\begin{align}\label{argu6}
\Big|\int_{\frac{2\pi}{\theta}-1}^\infty F(s)\dd s \Big|  &\leq
\theta(2\pi-2\theta)^{-1}\int_{\frac{2\pi}{\theta}-1}^\infty
\Big(\frac{s^2-1}{2}\Big)^{\alpha-{\frac12}}(s-1)
\e^{-s\beta\theta}\dd s
    \end{align}
Then we sum up \eqref{argu5} and \eqref{argu6}.
We choose the new $c$ to be the maximum of the old $c$ and $(2\delta)^{-1}$. Next we integrate by parts:
    \begin{align}\notag
\Big|\int_1^\infty F(s)\dd s\Big|&\leq
c\theta\int_1^{\infty}
                     \Big(\frac{s^2-1}{2}\Big)^{\alpha-{\frac12}}(s-1)
                     \e^{-s\beta\theta}\dd s\\\notag
  &= \frac{c  }{\beta}  \int_1^\infty
    \Big(\frac{s^2-1}{2}\Big)^{\alpha-{\frac12}}
    \frac{2\alpha s+1}{1+s} \e^{-s\beta\theta}\dd s
  \\\label{argu4}
  &\leq \frac{c }{\beta} (\beta\theta)^{-\alpha}
K_\alpha(\beta\theta).
\end{align}

Next we consider $\frac12\leq \alpha\leq\frac32$.
Choose $\sigma \in ] \pi,\pi+\delta[$. For $1\leq
s\leq\frac{\sigma}{\theta}$, we have
\begin{align}\pi-\theta\frac{s+1}{2}\geq\frac{\pi-\sigma+\delta}{2}>0,
\end{align}
so for such $s$ the argument of $\sinc \big(\theta\frac{s+1}{2} \big)$ is separated from the location of the zero by a constant independent of $\theta$ and $s$. Hence we have a bound
\begin{align}
\frac{\sinc \big( \theta\frac{s-1}{2} \big) \sinc \big( \theta\frac{s+1}{2} \big)}{\sinc\theta}
  \geq c>0.\end{align}
  Therefore, we can apply \eqref{idio} and obtain \eqref{esto4a.} on
  this interval. Thus
\begin{align}\label{argu5.}
\Big|\int_1^{\frac{\sigma}{\theta}}F(s)\dd s\Big|&\leq
c\theta\int_1^{\frac{\sigma}{\theta}}
\Big(\frac{s^2-1}{2}\Big)^{\alpha-{\frac12}}(s-1)
\e^{-s\beta\theta}\dd s.
\end{align}

On $\frac{\sigma}{\theta}\leq s\leq \frac{2\pi}{\theta}-1$ we have
$\delta<\sigma-\theta\leq\theta(s-1)$. We estimate the left hand side of \eqref{esto4a.} by a constant and get
\begin{align}\label{argu5..}
\Big|\int_{\frac{\sigma}{\theta}}^{\frac{2\pi}{\theta}-1}F(s)\dd s\Big|&\leq
c\frac{\theta}{\delta}\int_{\frac{\sigma}{\theta}}^{\frac{2\pi}{\theta}-1}
\Big(\frac{s^2-1}{2}\Big)^{\alpha-{\frac12}}(s-1)
                     \e^{-s\beta\theta}\dd s.
\end{align}
Now combine \eqref{argu5.}, \eqref{argu5..}, and
\eqref{argu6} and argue as in \eqref{argu4}.

Finally, consider $-\frac12 < \alpha\leq\frac12$. We need to consider only the interval $\frac\sigma{\theta}\leq s\leq
\frac{2\pi}{\theta}-1$. Contributions of the remaining part of the integration region are $O(\beta^{-1})$ by previous arguments.

  First,
  \begin{align} 
  s^2-1\geq\frac{\sigma^2-\theta^2}{\theta^2}\geq
    \frac{c}{\theta^2}\quad\Rightarrow\quad\
    \Big(\frac{s^2-1}{2}\Big)^{\alpha-{\frac12}}\leq c\theta^{1-2\alpha}.\end{align}
Moreover,
  using \eqref{bellow} we have
  \begin{align}
\frac{1}{\sin\theta}\geq c,&\quad\sinc \big( \theta\tfrac{s\pm1}{2} \big) \geq
    c\theta\big(\tfrac{2\pi}{\theta}-s\mp1\big),
\end{align}
which implies
\begin{align}
     &\Big(
\frac{\sinc \big( \theta\frac{s-1}{2} \big) \sinc \big( \theta\frac{s+1}{2} \big)}{\sinc\theta}
  \Big)^{\alpha-\frac12}-1 \leq
 \Big(
\frac{\sinc \big( \theta\frac{s-1}{2} \big) \sinc \big( \theta\frac{s+1}{2} \big)}{\sinc\theta}
                            \Big)^{\alpha-\frac12}\\
\leq &\; c    \theta^{2\alpha-1}\Big(\big(\tfrac{2\pi}{\theta}-s\big)^2-1\Big)^{\alpha-\frac12}
  =c\theta^{2\alpha-1}(t^2-1)^{\alpha-\frac12},
  \notag \end{align}
where we set $t=\frac{2\pi}{\theta}-s$.
Then we estimate (very roughly)
\begin{align}\label{argu5...}
  \Big|\int_{\frac{\sigma}{\theta}}^{\frac{2\pi}{\theta}-1}
  F(s)\dd s\Big|&\leq
c\int_{1}^{\frac{2\pi-\sigma}{\theta}}(t^2-1)^{\alpha-\frac12}\e^{-\beta 
 2\pi+\beta\theta t}  \dd t  \\
  &\leq c\int_{1}^{\frac{2\pi-\sigma}{\theta}}(t^2-1)^{\alpha-\frac12}\e^{\beta 
    2(\pi-\sigma)-\beta\theta t}   \dd t \\& \leq \ c\e^{\beta 2(\pi-\sigma)}(\beta\theta)^{-\alpha} K_\alpha(\beta\theta)=(\beta\theta)^{-\alpha} K_\alpha(\beta\theta) O( \beta^{-\infty} ).
\end{align}

Finally, let us prove \eqref{asym1}.
  Substituting $\phi=s\theta$ in \eqref{quw2aa}
and using the identity $\cosh(s\theta)-\cosh(\theta)
=2\sinh\big(\theta\tfrac{s-1}{2}\big)\sinh\big(\theta\tfrac{s+1}{2}\big)$, 
we find
\begin{align}
  \label{eq:int_scaled_GegenZ}
  &\frac{\sqrt\pi\Gamma(\tfrac12-\alpha+\lambda)(\sinh \theta)^{\alpha+\frac12}}{2^{\lambda+\frac12}  
\theta^{\alpha+\frac12}}
{\bf Z}_{\alpha,\lambda}(\cosh\theta)  
\\
=&\frac{\sqrt{\pi}}{\sqrt{2} \Gamma(\tfrac12+\alpha)}
\int\limits_{1}^\infty 
\Bigg(\frac{2\sinh\big(\theta\frac{s-1}{2}\big)\sinh\big(\theta\frac{s+1}{2}\big)}{\theta\sinh\theta}\Bigg)^{\alpha -\frac12}\e^{ -\lambda\theta s }\dd s. 
\end{align}
Subtracting \eqref{eq:int_scaled_BesselK} with $\beta$ replaced with $\lambda$ from \eqref{eq:int_scaled_GegenZ}, we obtain
\begin{align}\label{asym1.}
&\frac{\sqrt\pi\Gamma(-\alpha+\lambda+\frac12)(\sinh \theta)^{\alpha+\frac12}}{2^{\lambda+\frac12}  
\theta^{\alpha+\frac12}}
{\bf Z}_{\alpha,\lambda}(\cosh\theta) -(\lambda\theta)^{-\alpha} K_\alpha(\lambda \theta) \\\notag
= 
& c
 \int_1^{\infty} \Big(\frac{s^2-1}{2}\Big)^{\alpha-{\frac12}}\Bigg(\Big(
\frac{\sinhc \big( \theta\frac{s-1}{2} \big)\sinhc \big( \theta\frac{s+1}{2} \big)}{\sinhc\theta}
 \Big)^{\alpha-\frac12}-1\Bigg)\e^{-s\lambda\theta}\dd    s         . 
\end{align}

Function $s\mapsto\frac{\sinhc\big(\theta\frac{s-1}{2}\big)\sinhc\big(\theta\frac{s+1}{2}\big)
 }{\sinhc(\theta) }$ is monotonically increasing and equals $1$ at $s=1$.
Arguing as 
 in the proof of \eqref{esto4} we obtain
 \begin{align}
\Bigg|   \frac{\sinhc \big( \theta\frac{s-1}{2} \big)\sinhc \big( \theta\frac{s+1}{2} \big)}{\sinhc\theta}-1 \Bigg|
  \leq&c\theta(s-1)\e^{\theta(s-1)}.\label{esto4a..}
\end{align}
Next we note that
\begin{align}
   & \frac{\sinhc \big( \theta\frac{s-1}{2} \big)\sinhc \big( \theta\frac{s+1}{2} \big)}{\sinhc\theta} = \frac{2}{\theta (s^2-1)} (\e^{\theta (s-1)}-1) \frac{1- \e^{- \theta (s+1)}}{1-\e^{-2 \theta}} \\
   \notag  \leq &  \frac{2 \e^{\theta(s-1)}}{s+1} \frac{1- \e^{- \theta (s+1)}}{1-\e^{-2 \theta}} \leq \e^{\theta(s-1)},
\end{align}
where in the first inequality we used $\e^x-1\leq x \e^x$, and in the second one we optimized $\frac{1- \e^{- \theta (s+1)}}{1-\e^{-2 \theta}}$ with respect to $\theta$. Invoking \eqref{idio} again, we get
\begin{align}
\Bigg|\Big(  \frac{\sinhc\theta\frac{s-1}{2}\sinhc\theta\frac{s+1}{2}}{\sinhc\theta}               \Big)^{\alpha-\frac12}-1\Bigg|
  \leq&c\theta(s-1)\e^{\theta(s-1)\rho(\alpha)},\label{esto4b}
\end{align}
where $\rho(\alpha) = \max \{ 1 , \alpha - \tfrac12 \}$. 
Then, integrating by parts, we obtain
\begin{align}
  |\eqref{asym1.}|\leq
&
c\theta\int_1^{\infty} \Big(\frac{s^2-1}{2}\Big)^{\alpha-{\frac12}}(s-1) \e^{-s\lambda\theta+(s-1)\theta\rho(\alpha)}\dd s\\\notag
  =&\frac{c  }{\lambda-\rho(\alpha)} \e^{-\theta\rho(\alpha)} \int_1^{\infty}
    \Big(\frac{s^2-1}{2}\Big)^{\alpha-{\frac12}}
    \frac{2\alpha s+1}{1+s} \e^{-s\theta(\lambda-\rho(\alpha))}\dd s\\
\leq&\frac{c \e^{-\theta\rho(\alpha)} }{\lambda-\rho(\alpha)} 
 \big(\theta(\lambda-\rho(\alpha))\big)^{-\alpha} K_\alpha\big(\theta(\lambda-\rho(\alpha))\big) \notag \\
 \leq & \frac{c}{\lambda - \rho(\alpha)} \Big( \frac{\lambda}{\lambda-\rho(\alpha)} \Big)^{2 \alpha} (\lambda \theta)^{- \alpha} K_{\alpha}(\lambda \theta), \notag
\end{align}
where in the last step we used \eqref{neat_inequality}.
 \end{proof}

\subsection{Bilinear integrals}

In this subsection we compute certain integrals involving products of two
Gegenbauer functions. They are analogs of the integrals for products
of Macdonald functions considered in Subsect. \ref{Bilinear integrals-macdonald}.
We will use the integration variable $2w$, because this will
facilitate comparison of the integrals for Gegenbauer functions with
those for Macdonald functions. Indeed, if we consider the spherical,
resp. hyperbolic interpretation of these identities, for small $r$, we have
$2(w\mp1)\approx r^2$. Therefore, $\dd 2w$ corresponds to 
$2r\dd r=\dd r^2$. This is especially important in the case of anomalous generalized
integrals in Theorem \ref{thm:asymp_genintSZ}, where without
a good choice of variables we would not have obtained a correct limit.
    
\begin{theorem}
\label{thm:S_integrals}   
 For $|\re(\alpha)|<1$, the following identities hold:
\begin{align}\label{formo}
&\int_{-1}^1{\bf S}_{\alpha,\ii\beta_1}(w){\bf
  S}_{\alpha,\ii\beta_2}(w)(1-w^2)^\alpha\dd 2w
\\=&   \frac{2^{2 \alpha +2}}{(\beta_1^2-\beta_2^2)\sin\pi\alpha} \Big(\frac{\cosh(\pi\beta_1)
}{\Gamma(\frac12+\alpha-\ii\beta_2) 
  \Gamma(\frac12+\alpha+\ii\beta_2)}
  -(\beta_1\leftrightarrow\beta_2)\Big)\notag
\\
&\int_{-1}^1{\bf S}_{0,\ii\beta_1}(w){\bf
  S}_{0,\ii\beta_2}(w)\dd 2w &\\\notag
  &=
   \frac{4 \cosh(\pi\beta_1) \cosh(\pi\beta_2)\Big(
   \psi\big(\tfrac12-\ii\beta_1\big)
   +\psi\big(\tfrac12+\ii\beta_1\big)
   -(\beta_1\leftrightarrow\beta_2)\Big)
   }{\pi^2(\beta_1^2-\beta_2^2)},
\end{align}
\begin{align}
&\int_{-1}^1{\bf S}_{\alpha,\ii\beta}(w)^2(1-w^2)^\alpha\dd 2w
=
    \frac{ 2^{2\alpha+1} \ii \cosh(\pi\beta) }
    {\beta\sin\pi\alpha \,\Gamma(\tfrac12+\alpha-\ii\beta)
    \Gamma(\tfrac12+\alpha+\ii\beta)}
\\ \notag
    &\times \Big( \psi\big(\tfrac12 +\alpha + \ii \beta \big) 
         - \psi\big(\tfrac12 +\alpha - \ii \beta \big) 
         + \psi\big(\tfrac12 - \ii \beta \big) 
         - \psi\big(\tfrac12 + \ii \beta \big) \Big)
    ,\\
  &
    \int_{-1}^1{\bf S}_{0,\ii\beta}(w)^2\dd 2w
  =   \frac{ 2 \ii \cosh^2(\pi\beta)
    \Big(\psi'(\frac12+\ii\beta)-\psi'(\frac12-\ii\beta)\Big)}
    {\beta\pi^2},\end{align}
\begin{align}
 &\int_{-1}^1{\bf S}_{\alpha,0}(w)^2(1-w^2)^\alpha\dd 2w\ 
=\ \frac{ 2^{2\alpha+1 } \left( \pi^2-2\psi'\big(\frac12+\alpha\big) \right)}
{\sin(\pi\alpha)\Gamma(\frac12+\alpha)^2},
\\ &
\int_{-1}^1{\bf S}_{0,0}(w)^2\dd 2 w\ =\                                                 -\frac{4  \psi''\big(\frac12\big)}
{\pi^2}.
  \end{align}
  \end{theorem}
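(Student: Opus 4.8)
The plan is to mimic the proof of the Macdonald bilinear integrals in Subsection~\ref{Bilinear integrals-macdonald}: apply Green's identity \eqref{square1} to the Gegenbauer operator $\cG_\alpha$ on its natural interval $[-1,1]$, where $\rho(w)=(1-w^2)^\alpha$ and the Sturm--Liouville coefficient is $p(w)=(1-w^2)^{\alpha+1}$. The function ${\bf S}_{\alpha,\lambda}$ solves $\cG_\alpha f=\big(\lambda^2-(\alpha+\tfrac12)^2\big)f$, so taking $\lambda_j=\ii\beta_j$ gives $E_1-E_2=-(\beta_1^2-\beta_2^2)$. First I would record that for $|\re(\alpha)|<1$ the integral on the left of \eqref{formo} converges: near $w=1$ the two factors are holomorphic and $(1-w^2)^\alpha$ is integrable because $\re(\alpha)>-1$, while near $w=-1$ the connection formula \eqref{formu2} shows that ${\bf S}_{\alpha,\ii\beta_j}(w)$ is a bounded term plus a term $\sim(1+w)^{-\alpha}$, so the integrand is $O\big((1+w)^{-\alpha}\big)$, integrable because $\re(\alpha)<1$.

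Green's identity then reads
\begin{align*}
\int_{-1}^1{\bf S}_{\alpha,\ii\beta_1}(w){\bf S}_{\alpha,\ii\beta_2}(w)(1-w^2)^\alpha\,\dd w
&=\frac{\cW(1)-\cW(-1)}{-(\beta_1^2-\beta_2^2)},\\
\cW(w)&:=(1-w^2)^{\alpha+1}\big({\bf S}_{\alpha,\ii\beta_1}{\bf S}_{\alpha,\ii\beta_2}'-{\bf S}_{\alpha,\ii\beta_1}'{\bf S}_{\alpha,\ii\beta_2}\big)(w).
\end{align*}
Since ${\bf S}_{\alpha,\lambda}$ and its derivative are bounded near $w=1$ and $\re(\alpha+1)>0$, the term $\cW(1)$ vanishes. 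For $w=-1$ I would substitute $w\mapsto-w$: writing $g_j(u):={\bf S}_{\alpha,\ii\beta_j}(-u)$ one has $\cW(-1)=-\lim_{u\to1}(1-u^2)^{\alpha+1}(g_1g_2'-g_1'g_2)(u)$, and \eqref{formu2} together with ${\bf S}_{\alpha,\lambda}={\bf S}_{\alpha,-\lambda}$ gives, for $\alpha\neq0$,
\begin{align*}
g_j(u)&=A_j\,{\bf S}_{\alpha,\ii\beta_j}(u)+B_j\,\frac{{\bf S}_{-\alpha,\ii\beta_j}(u)}{(1-u^2)^\alpha},\\
A_j&=-\frac{\cosh(\pi\beta_j)}{\sin(\pi\alpha)},\qquad B_j=\frac{2^{2\alpha}\pi}{\sin(\pi\alpha)\,\Gamma(\tfrac12+\alpha+\ii\beta_j)\Gamma(\tfrac12+\alpha-\ii\beta_j)}.
\end{align*}
Expanding the Wronskian bilinearly into four products, the ``regular$\,\times\,$regular'' product tends to $0$ because $\re(\alpha+1)>0$, the ``singular$\,\times\,$singular'' product because $\re(1-\alpha)>0$ (this is where $|\re(\alpha)|<1$ is used in full), and the two mixed products contribute $\pm2\alpha\,{\bf S}_{\alpha,\cdot}(1){\bf S}_{-\alpha,\cdot}(1)=\pm\dfrac{2\alpha}{\Gamma(1+\alpha)\Gamma(1-\alpha)}=\pm\dfrac{2\sin(\pi\alpha)}{\pi}$ by the reflection formula. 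Thus $\cW(-1)=-\frac{2\sin(\pi\alpha)}{\pi}(A_1B_2-A_2B_1)$; inserting the $A_j,B_j$, simplifying, and multiplying by $2$ because the integration variable in the theorem is $2w$, one obtains \eqref{formo}.

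The other four identities follow from \eqref{formo} by \hospital's rule. The $\alpha=0$ case of \eqref{formo} is obtained by analytic continuation: for fixed $\beta_1\neq\beta_2$ the left side is holomorphic in $\alpha$ on $\{|\re(\alpha)|<1\}$ (dominated convergence and Morera), and so is the right side (its apparent pole at $\alpha=0$ is removable, the bracket vanishing there since $\Gamma(\tfrac12+\ii\beta)\Gamma(\tfrac12-\ii\beta)=\pi/\cosh(\pi\beta)$); the value at $\alpha=0$ is then the $\alpha\to0$ limit, computed via $\frac{\dd}{\dd\alpha}\frac{1}{\Gamma(\tfrac12+\alpha\pm\ii\beta)}=-\frac{\psi(\tfrac12+\alpha\pm\ii\beta)}{\Gamma(\tfrac12+\alpha\pm\ii\beta)}$. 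The diagonal identities ($\beta_1=\beta_2$) follow by \hospital's rule in $\beta_2\to\beta_1$, again legitimate because both sides are continuous there and the pole of the right side is removable; here the digamma reflection formula in the form $\psi(\tfrac12-\ii\beta)-\psi(\tfrac12+\ii\beta)=-\ii\pi\tanh(\pi\beta)$ absorbs the $\pi\sinh(\pi\beta)$ produced by differentiating $\cosh(\pi\beta_2)$. Finally the cases $\beta=0$ (with $\alpha\neq0$) and $\alpha=\beta=0$ come from one more \hospital\ limit as $\beta\to0$, using $\psi'(\tfrac12)=\tfrac{\pi^2}{2}$; these bring in $\psi'(\tfrac12+\alpha)$ and $\psi''(\tfrac12)$. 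All Gamma and digamma identities used are collected in Appendix~\ref{app:Gamma}.

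I expect the main obstacle to be the boundary analysis at $w=-1$: one must arrange the limit of the Wronskian so that the two ``diagonal'' products manifestly cancel, isolate the surviving mixed products, and confirm that $|\re(\alpha)|<1$ is exactly what annihilates the spurious pieces, the collapse $\frac{2\alpha}{\Gamma(1+\alpha)\Gamma(1-\alpha)}=\frac{2\sin(\pi\alpha)}{\pi}$ being the only genuinely nontrivial identity. Everything afterward---the repeated \hospital\ limits and the digamma bookkeeping---is routine once the relevant special-function identities are in hand.
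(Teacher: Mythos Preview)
Your proposal is correct and follows essentially the same route as the paper: Green's identity for $\cG_\alpha$ on $[-1,1]$, the boundary term at $w=1$ vanishes, the one at $w=-1$ is evaluated via the connection formula \eqref{formu2}, and the remaining five identities follow by \hospital. The paper's proof is far terser---it does not spell out the bilinear expansion of the Wronskian or the $|\re(\alpha)|<1$ bookkeeping---so your version is a faithful fleshing-out rather than a different argument.
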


  \begin{proof} The Gegenbauer equation implies  
  \begin{align*}
    &(\beta_1^2-\beta_2^2) \int_{-1}^1 
{\bf S}_{\alpha,\ii\beta_1}(w){\bf 
      S}_{\alpha,\ii\beta_2}(w)(1-w^2)^\alpha\dd w
      \\ \notag=&
    \int_{-1}^1 
{\bf S}_{\alpha,\ii\beta_1}(w) \partial_w (1-w^2)^{\alpha +1} \partial_w
{\bf  S}_{\alpha,\ii\beta_2}(w) \dd w 
\\ \notag & -\int_{-1}^1
\Big(\partial_w (1-w^2)^{\alpha+1}  \partial_w{\bf S}_{\alpha,\ii\beta_1}(w) \Big){\bf 
                                                           S}_{\alpha,\ii\beta_2}(w)\dd w\\
    =&\lim_{w\searrow-1} \Big( (1-w^2)^{\alpha +1}\Big(
{\bf S}_{\alpha,\ii\beta_1}(w)  \partial_w{\bf 
                                                           S}_{\alpha,\ii\beta_2}(w)
                                                           -
                \big( \partial_w{\bf S}_{\alpha,\ii\beta_1}(w)\big) {\bf 
                                                           S}_{\alpha,\ii\beta_2}(w)\Big)\Big).
  \end{align*}
  Then we use the connection formula \eqref{formu2} and obtain
  \eqref{formo}.
  The remaining identities follow by the  de  l'H\^opital rule.  \end{proof}

Integrals in Theorem \ref{thm:S_integrals} are defined as generalized integrals for $\re(\alpha)>-1$ (beyond this region one would need to use a generalized integral which also takes care of non-integrability at the right endpoint). If $\alpha\notin\nn$, these generalized integrals are non-anomalous and hence formulas from 
Theorem \ref{thm:S_integrals} remain true. Below we compute anomalous generalized integrals 
for $\alpha\in\nn$.

\begin{theorem}\label{thm:genIntsSS}
  Let $\alpha\in \nn$. Then
\begin{align}
 &  (-\tfrac14)^{\alpha+1} 
 \Big( \prod_{i=1}^2 \prod_{\pm} \Gamma(\tfrac{1}{2} + \alpha \pm \ii \beta_i) \Big)   \gen\int_{-1}^1{\bf S}_{\alpha,\ii\beta_1}(w)
 {\bf S}_{\alpha,\ii\beta_2}(w) (1-w^2)^\alpha\dd 2w  \label{eq:formo_integer1.} \\ 
 \notag =  & \frac{1}{\beta_1^2 - \beta_2^2}  \Bigg( \big(\tfrac12 +\ii\beta_1 \big)_{\alpha} \big(\tfrac12 -\ii\beta_1 \big)_{\alpha}
\Big( \ln 4 - \sum_{\pm} \psi\big( \tfrac12 +\alpha \pm \ii\beta_1 \big) \Big) 
   - (\beta_1 \leftrightarrow \beta_2) \Bigg) \\
\notag & + \sum_{k=0}^{\alpha-1}  \Bigg( \prod_{\pm} \frac{(\tfrac12 \pm \ii \beta_2)_{\alpha} (\tfrac12 \pm \ii \beta_1)_k}{(\tfrac12 \pm \ii \beta_2)_{k+1}} \Bigg)  \\
 \notag & \times \Big(  \psi(\alpha-k)+\psi(1+k) -\sum_{\pm}
  H_{\alpha-1-k}\big(\tfrac{1}{2}-\alpha\pm \ii \beta_2 \big) 
\Big). 
 \end{align}

\begin{align}
  \label{eq:formo_integer1_limit1.}
  & (- \tfrac14)^{\alpha+1} \frac{\pi}{\cosh \pi \beta}  \Big( \prod_\pm \Gamma(\tfrac12 + \alpha \pm \ii \beta) \Big)  \gen \int_{-1}^1 \mathbf S_{\alpha,\ii \beta}(w)^2 (1-w^2)^{\alpha} \dd 2 w \\
 \notag  = &  \sum_{k=0}^{\alpha-1} \frac{1}{(k+\tfrac12)^2 + \beta^2} \Big( \ln 4 + \psi(\alpha-k) + \psi(k+1) - \sum_\pm \psi(-\tfrac12 - k \pm \ii \beta) \Big) \\
 \notag & - \frac{\ii}{2 \beta} \big( \psi'(\tfrac12 + \alpha + \ii \beta) - \psi'(\tfrac12 + \alpha - \ii \beta) \big).
\end{align}

\begin{align}
 \label{eq:formo_integer1_limit2.}
&  (- \tfrac14)^{\alpha+1} \pi \Gamma(\tfrac12 + \alpha)^2 \ \gen \int_{-1}^1 \mathbf S_{\alpha,0}(w)^2 (1-w^2)^{\alpha} \dd 2 w \\
 \notag = & \psi''(\tfrac12 + \alpha) + \sum_{k=0}^{\alpha-1} \frac{\ln 4 + \psi(\alpha-k) + \psi(k+1) - 2 \psi(-\tfrac12 -k)}{(k + \tfrac12)^2}
\end{align}
\end{theorem}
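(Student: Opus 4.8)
The plan is to compute \eqref{eq:formo_integer1.}--\eqref{eq:formo_integer1_limit2.} by analytic continuation in $\alpha$, in the spirit of Subsection~\ref{ssc:dim_reg} and of the proof of Proposition~\ref{prop:gen_int_mcdonald}. By the discussion preceding the theorem, for $\re(\alpha)>-1$ with $\alpha\notin\nn$ the generalized integral in the variable $2w$ (equivalently, in the homogeneous variable $t:=2(w+1)$ at the left endpoint $w=-1$) is non-anomalous, so by the Morera argument of Subsection~\ref{ssc:dim_reg} it equals the right hand side of \eqref{formo}; denote the latter by $g(\alpha)=g(\beta_1,\beta_2;\alpha)$. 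It is holomorphic in $\alpha$ off $\zz$, with a simple pole at each $m\in\nn$ coming from $\tfrac1{\sin\pi\alpha}$. I will concentrate on \eqref{eq:formo_integer1.}; the other two formulas are obtained from it at the end.

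The first ingredient is the finite part of $g$ at $\alpha=m$. Laurent-expanding, the pole comes entirely from $\tfrac1{\sin\pi\alpha}$, while the $O(1)$ term receives contributions from $\partial_\alpha 2^{2\alpha+2}=2^{2\alpha+2}\ln4$ and from $\partial_\alpha\big(\Gamma(\tfrac12+\alpha-\ii\beta_2)\Gamma(\tfrac12+\alpha+\ii\beta_2)\big)^{-1}=-\big(\Gamma(\tfrac12+\alpha-\ii\beta_2)\Gamma(\tfrac12+\alpha+\ii\beta_2)\big)^{-1}\sum_{\pm}\psi(\tfrac12+\alpha\pm\ii\beta_2)$, together with the $(\beta_1\leftrightarrow\beta_2)$ copy. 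Multiplying $\fp_{\alpha\to m}g$ by the prefactor $(-\tfrac14)^{m+1}\prod_{i=1}^{2}\prod_{\pm}\Gamma(\tfrac12+m\pm\ii\beta_i)$ of \eqref{eq:formo_integer1.} and simplifying with the reflection formula, in the form $\cosh(\pi\beta_i)\,\Gamma(\tfrac12+m+\ii\beta_i)\Gamma(\tfrac12+m-\ii\beta_i)=\pi\,(\tfrac12+\ii\beta_i)_m(\tfrac12-\ii\beta_i)_m$, produces a first part of \eqref{eq:formo_integer1.}; the rearrangement placing the $\cosh$ and $\Gamma$ indices on a common footing, as well as the constant $\ln4$ and the remaining $\psi$-terms, will be completed by the correction term below.

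The second ingredient is the correction term of \eqref{eq:dimreg_anomalous}. The non-integrable powers of the integrand near $w=-1$ are $(1+w)^{-\alpha+n}$ with $n=0,\dots,m-1$, so the continuation parameter of Subsection~\ref{ssc:dim_reg} is $-\alpha$, and the generalized integral at $\alpha=m$ equals $\fp_{\alpha\to m}g$ plus the $\alpha$-derivative, at $\alpha=m$, of the coefficient of the critical homogeneous term $(1+w)^{-\alpha+m-1}$ in the expansion of the integrand at $w=-1$ (times the prefactor). By the connection formula \eqref{formu2} (equivalently, the degenerate expansion \eqref{integg}), and using that $\mathbf S$ is even in its second index, all non-integrable terms of ${\bf S}_{\alpha,\ii\beta_1}(w){\bf S}_{\alpha,\ii\beta_2}(w)(1-w^2)^\alpha$ near $w=-1$ are contained in
\[
\frac{2^{4\alpha}\,\Gamma(\alpha)^2\,\Gamma(1-\alpha)^2}{\prod_{i=1}^{2}\prod_{\pm}\Gamma(\tfrac12+\alpha\pm\ii\beta_i)}\,(1-w^2)^{-\alpha}\,{\bf S}_{-\alpha,\ii\beta_1}(-w)\,{\bf S}_{-\alpha,\ii\beta_2}(-w),
\]
which I would expand by multiplying the two $\mathbf F$-series for ${\bf S}_{-\alpha,\ii\beta_i}(-w)$ in $\tfrac{1+w}{2}$ against the binomial series of $(1-w)^{-\alpha}$ at $w=-1$. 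The coefficient of $(1+w)^{-\alpha+m-1}$ is then a triple sum over indices $n_1+n_2+n_3=m-1$; although $\Gamma(1-\alpha)^2$ has a double pole at $\alpha=m$ while the factors $\Gamma(1-\alpha+n_1)^{-1}$, $\Gamma(1-\alpha+n_2)^{-1}$ from the two series vanish there, they recombine through $\Gamma(1-\alpha)^2\big/\!\big(\Gamma(1-\alpha+n_1)\Gamma(1-\alpha+n_2)\big)=1\big/\!\big((1-\alpha)_{n_1}(1-\alpha)_{n_2}\big)$ into a function holomorphic near $\alpha=m$. Differentiating this in $\alpha$ at $\alpha=m$ — the logarithmic derivatives of $2^{4\alpha}$, $\Gamma(\alpha)^2$ and $\prod_{i,\pm}\Gamma(\tfrac12+\alpha\pm\ii\beta_i)^{-1}$ giving $\ln$- and $\psi$-terms, those of the Pochhammer symbols giving $\psi(\alpha-k)$, $\psi(1+k)$ and the generalized harmonic numbers $H_{\alpha-1-k}(\,\cdot\,)$ — and reindexing the triple sum as a single sum $\sum_{k=0}^{\alpha-1}$, one is left, after adding the prefactored $\fp_{\alpha\to m}g$ of the previous step, exactly with \eqref{eq:formo_integer1.}. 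I expect this last computation to be the main obstacle: extracting the critical coefficient as an honest holomorphic function of $\alpha$ near $m$, carrying out the cancellation of the apparent double pole, and then the $\alpha$-differentiation and reindexing that bring the answer to the compact form; by contrast the finite-part step is routine.

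One should keep in mind that the result is tied to the chosen integration variable $2w$: under a different affine scaling of the variable the answer changes by the anomalous coefficient times the logarithm of the scaling ratio, cf.\ \eqref{eq:genInt_scaling}, and $2w$ — for which $2(w+1)\approx r^2$ in the Macdonald picture — is the scale for which the correction comes out as stated. Finally, \eqref{eq:formo_integer1_limit1.} and \eqref{eq:formo_integer1_limit2.} follow from \eqref{eq:formo_integer1.} by the \hospital{} rule, exactly as in Theorem~\ref{thm:S_integrals} and Proposition~\ref{prop:gen_int_mcdonald}: one checks, as in the Macdonald case, that the generalized integral is holomorphic in $(\beta_1,\beta_2)$ — the apparent singularity at $\beta_1=\beta_2$ in \eqref{eq:formo_integer1.} being removable — and then lets $\beta_2\to\beta_1$ and subsequently $\beta_1\to0$ on both sides, differentiating the right hand side and simplifying with the $\psi$-identity \eqref{induction}.
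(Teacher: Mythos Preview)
Your overall strategy matches the paper's: dimensional regularization in $\alpha$, decomposing the anomalous generalized integral at $\alpha=m\in\nn$ into the finite part of the analytic continuation of \eqref{formo} plus the $\alpha$-derivative of the critical coefficient. The finite-part computation and the sign of the correction (added rather than subtracted, since the homogeneous exponent is $-\alpha+m-1$) are correct.

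The key technical difference lies in extracting the critical coefficient. You propose to expand both factors $\mathbf S_{-\alpha,\ii\beta_i}(-w)$ by the same $\mathbf F$-series and then $(1-w)^{-\alpha}$ by a separate binomial series, producing a triple sum over $n_1+n_2+n_3=m-1$. The paper instead applies the Euler identity \eqref{solu1_form2} to \emph{one} of the two Gegenbauer factors: writing $\mathbf S_{-\alpha,\ii\beta_1}(-w)=\big(\tfrac{2}{1-w}\big)^{-\alpha}\mathbf F\big(\tfrac12+\ii\beta_1,\tfrac12-\ii\beta_1;1-\alpha;\tfrac{1+w}{2}\big)$ cancels the prefactor $\big(\tfrac{1-w}{2}\big)^{-\alpha}$ exactly, so the singular part is simply a product of two power series in $\tfrac{1+w}{2}$. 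The critical coefficient is then a single convolution sum over $k+j=m-1$, whose $\alpha$-derivative at $\alpha=m$ directly gives the sum in \eqref{eq:formo_integer1.}. This trick is precisely what produces the asymmetry in $\beta_1,\beta_2$ noted in the Remark following the theorem; your symmetric triple-sum approach would yield an equivalent but differently organized expression and would not collapse to the stated single sum without a further nontrivial summation identity.

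For the limits \eqref{eq:formo_integer1_limit1.}--\eqref{eq:formo_integer1_limit2.} the paper does use de~l'H\^opital as you say, but the simplification relies on the partial-fraction identity $\sum_{k=0}^{\alpha-1}\big((k+\tfrac12)^2+\beta^2\big)^{-1}=\tfrac{\ii}{2\beta}\big(H_\alpha(\tfrac12+\ii\beta)-H_\alpha(\tfrac12-\ii\beta)\big)$ together with the digamma reflection and shift relations, rather than on \eqref{induction}.
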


\begin{remark}
    The right-hand side of \eqref{eq:formo_integer1.} is not manifestly symmetric in $\beta_1,\beta_2$, although the left-hand side is. This is a nontrivial sum identity which follows from the equality of \eqref{solu1} and \eqref{solu1_form2}. The same is true for \eqref{eq:gen_intZZ.} below.
\end{remark}

\begin{proof}
Let us first compute \eqref{eq:formo_integer1.}. We will use the
dimensional regularization. 
Thus $\alpha$ at first  takes arbitrary complex values with $\re(\alpha)>-1$. We define 
\begin{align}
 f(\alpha, w):= 2^{-4\alpha} 
 \left[ \prod_{i=1}^2 \prod_{\pm} \Gamma(\tfrac12 + \alpha \pm \ii \beta_i ) \right] \mathbf{S}_{\alpha,\ii\beta_1}(w) \mathbf{S}_{\alpha,\ii\beta_2}(w)(1-w^2)^\alpha.
\end{align}

In terms of $f(\alpha, w)$, \eqref{formo} becomes (for 
$|\re(\alpha)|\leq1$) 
 \begin{align}
 \label{kaku}
  &\quad\int_{-1}^1  f(\alpha, w) \dd 2w =\frac{2^{-2 \alpha +2}\pi }{(\beta_1^2-\beta_2^2)\sin\pi\alpha}
    \Bigg( 
 \prod_{\pm} \frac{\Gamma(\tfrac12 + \alpha \pm \ii \beta_1)}{\Gamma(\tfrac12 \pm \ii \beta_1)} - (\beta_1 \leftrightarrow \beta_2) \Bigg).
  \end{align} 

Let $m\in\nn_0$. The residue and the finite part of \eqref{kaku} at $\alpha=m$ are
\begin{align}
\res(m)
 = &  \frac{(-1)^{m} 2^{-2m +2} 
    }{\beta_1^2-\beta_2^2 }
    \Big(\big(\tfrac12-\ii\beta_1\big)_m
  \big(\tfrac12+\ii\beta_1\big)_m
  -(\beta_1\leftrightarrow\beta_2)\Big),
\end{align}
\begin{align}
\label{eq:S_int_pure_finite_part}
  & (-1)^m 2^{2m-2} (\beta_1^2 - \beta_2^2)  \fp_{\alpha \to m}\int_{-1}^1f(\alpha,w)\dd 2w  \\
  =  
 \notag 
 & \big(\tfrac12-\ii\beta_1\big)_m
  \big(\tfrac12+\ii\beta_1\big)_m  
     \Big(-\ln(4) +\sum_{\pm} \psi\big( \tfrac12 + m \pm \ii\beta_1 \big)  \Big)
          - (\beta_1\leftrightarrow\beta_2).
          \end{align}

 All negative powers of $w+1$ in $f(\alpha,w)$ are contained in 
 \begin{align*}
f^\sing(\alpha,w):=  \frac{ 2^{-2\alpha} \pi^2 \big(\frac{1-w}{2}\big)^{-\alpha} 
   \big(\frac{1+w}{2}\big)^{-\alpha} 
   {\bf S}_{-\alpha,\ii\beta_1}(-w){\bf S}_{-\alpha,\ii\beta_2}(-w)
   }{\sin^2\pi\alpha}.
 \end{align*}

In order to avoid having to expand $(1-w)^\alpha$ in terms of 
$1+w$, we choose to expand one Gegenbauer function using 
\eqref{solu1} and the other one using \eqref{solu1_form2}:
\begin{align}
\label{summa} 
& (2(1+w))^{\alpha} f^\sing(\alpha,w) = \frac{   \pi^2    }{\sin^2\pi\alpha}
 {\bf 
  F}\Big(\frac12+\ii\beta_1,\frac12-\ii\beta_1,-\alpha+1,\frac{1+w}{2}\Big)\\\notag
  & \times 
 {\bf 
  F}\Big(\frac12-\alpha+\ii\beta_2,\frac12-\alpha-\ii\beta_2,-\alpha+1,\frac{1+w}{2}\Big)  
\\\notag  =  & \Bigg(\sum_{k=0}^\infty\frac{(-1)^k(\frac12+\ii\beta_1)_k 
         (\frac12-\ii\beta_1)_k\Gamma(\alpha-k)}{k!}\Big(\frac{1+w}{2}\Big)^k\Bigg)\\ \notag
     &\times   
     \Bigg(\sum_{j=0}^\infty\frac{(-1)^j(\frac12-\alpha+\ii\beta_2)_j 
           (\frac12-\alpha-\ii\beta_2)_j\Gamma(\alpha-j)}{j!}\Big(\frac{1+w}{2}\Big)^j\Bigg).
 \end{align}

The coefficient at 
$\big(2(1+w)\big)^{-\alpha+m-1}$ in $f^\sing(\alpha,w)$ is
\begin{align*}
 f_{m-1}(\alpha)=& 
   \sum_{k=0}^{m-1}
   \frac{\Gamma(\alpha-k)  \Gamma(\alpha-m+1+k)}{k!(m-1-k)! (-4)^{m-1}} \prod_{\pm} \big( \tfrac{1}{2} \pm \ii \beta_1 \big)_{k} \big( \tfrac{1}{2}-\alpha \pm \ii \beta_2 \big)_{m-1-k}.
\end{align*}
We evaluate the derivative
\begin{align}
  &f_{m-1}'(m) =
  (-\tfrac14)^{m-1} \sum_{k=0}^{m-1} \prod_{\pm} \big( \tfrac{1}{2} \pm \ii \beta_1 \big)_{k} \big( \tfrac{1}{2}-m \pm \ii \beta_2 \big)_{m-1-k}
 \\ \notag & \times \Bigg( \psi(m-k) + \psi(1+k) - \sum_{\pm} H_{m-1-k}\Big(\frac{1}{2}-m \pm \ii \beta_2 \Big)  \Big).
\end{align}
Then we use
\begin{align}
 \big( \tfrac12 -m+\ii \beta_2 \big)_{m-1-k} 
 \big( \tfrac12 -m-\ii \beta_2 \big)_{m-1-k}=\frac{(\frac12+\ii\beta_2)_m (\frac12-\ii\beta_2)_m}{(\frac12+\ii\beta_2)_{k+1}(\frac12-\ii\beta_2)_{k+1}}.
  \end{align}
Since the singular exponent is $- \alpha + m -1$, we have to \emph{add} $f_{m-1}'(m)$ to the finite part of the bilinear integral.\footnote{For $\re(\alpha)>0$, the derivative term 
 has to be added to the finite part, not subtracted: 
 \begin{align*}
  -\frac{f_{m-1}(\alpha)}{m-\alpha} 
  = -\frac{f_{m-1}(m) + (\alpha-m) f'_{m-1}(m)}{m-\alpha} + O(\alpha-m)
  =  \frac{\res(m) }{\alpha-m}+ f'_{m-1}(m) + O(\alpha-m).
 \end{align*}
}
Thus, we obtain the generalized integral \eqref{eq:formo_integer1.} 
  for positive integers by multiplying 
  \begin{align}
   \gen\int_{-1}^1f(m,w)\dd 2w 
   =&  \fp_{\alpha \to m} \int_{-1}^1f(\alpha,w)\dd 2w 
   + f_{m-1}'(m)
  \end{align}
 with 
  \begin{align}
 \frac{2^{4m}  }{
 \Gamma(\tfrac12+m+\ii\beta_1)\Gamma(\tfrac12+m-\ii\beta_1)\Gamma(\tfrac12+m+\ii\beta_2)\Gamma(\tfrac12+m-\ii\beta_2) }.
 \end{align}

  Let us now prove \eqref{eq:formo_integer1_limit1.}. Since the 
  exponents of the non-integrable terms do not depend on $\beta$, 
  it suffices to take the limit $\beta_1,\beta_2\to \beta$ of 
  \eqref{eq:formo_integer1.}. The multiplicative factor in front 
  of the left-hand side becomes
  \begin{equation}
  \label{eq:prefactorS1}
      (-\tfrac14)^{\alpha+1} \prod_{\pm} \Gamma(\tfrac12 + \alpha \pm \ii \beta)^2 
      = (-\tfrac14)^{\alpha+1} \frac{\pi}{\cosh \pi \beta} 
      \prod_{\pm} \Gamma(\tfrac12 + \alpha \pm \ii \beta) (\tfrac12 \pm \ii \beta)_{\alpha}. 
  \end{equation}

Applying the  de l'Hôpital rule to the second line of
\eqref{eq:formo_integer1.} then yields 
\begin{align}
\label{qrqr}    \frac{\ii (\tfrac12 + \ii \beta)_{\alpha} (\tfrac12 - \ii \beta)_{\alpha}}{2 \beta}  \Big( &  \big( \ln 4 - \sum_\pm \psi(\tfrac12 + \alpha \pm \ii \beta) \big) \big( \sum_\pm \pm H_\alpha(\tfrac12 \pm \ii \beta) \big)  \\
    \notag & - \sum_\pm \pm \psi'(\tfrac12 + \alpha \pm \ii \beta)   \Big)
\end{align}

The third and fourth line of \eqref{eq:formo_integer1.} can be converted to the following form
\begin{align}\label{qrqr1}
           & (\tfrac12+\ii\beta)_{\alpha}   (\tfrac12-\ii\beta)_{\alpha}
        \sum_{k=0}^{\alpha -1} \frac{1 }{\big(\tfrac12 +k\big)^2+\beta^2
  }  \\
  \times&\Big( \psi(\alpha - k) + \psi(k+1) - \sum_{\pm}  H_{\alpha -1-k}\big(\tfrac12-\alpha 
 \pm \ii\beta\big)
 \Big)
 . \notag
        \end{align}
        
Next we use the identities
\begin{align}
 \sum_{k=0}^{\alpha-1} 
 \frac{1}{\big(\tfrac12 +k)^2+\beta^2} 
 = &
 \frac{\ii}{2\beta} 
  \Big( H_{\alpha}\big(\tfrac12+\ii\beta\big)
     -H_{\alpha}\big(\tfrac12-\ii\beta\big)\Big),\\
   \psi\big( \tfrac{1}{2} + \alpha + \ii\beta \big) 
+ \psi\big( \tfrac12  + \alpha - \ii\beta \big)
  &=
\psi\big( \tfrac{1}{2} -\alpha + \ii\beta \big) 
+ \psi\big( \tfrac12  - \alpha - \ii\beta \big),
       \\
  H_{\alpha-1-k}(\tfrac12-\alpha\pm\ii\beta)
 & =\psi(-\tfrac12-k\pm\ii\beta) - \psi(\tfrac12-\alpha\pm\ii\beta),
\end{align}
to combine \eqref{qrqr1} and \eqref{qrqr}.
This proves \eqref{eq:formo_integer1_limit1.}. 

Similarly, applying de l'H\^opital to \eqref{eq:formo_integer1_limit1.} for 
$\beta\to0$ yields \eqref{eq:formo_integer1_limit2.}.
 \end{proof}

   \begin{theorem}
   Let $|\re(\alpha)|<1$ and $\re(\lambda)>0$. Then
  \begin{align}
    &      \label{formo2}
\int_1^\infty{\bf Z}_{\alpha,\lambda_1}(w){\bf
  Z}_{\alpha,\lambda_2}(w)(w^2-1)^\alpha\dd 2w\\\notag=&
   \frac{2^{\lambda_1+\lambda_2+1}
                  }{(\lambda_1^2-\lambda_2^2)\sin\pi\alpha}\Big(
                 \frac{1}{\Gamma(\frac12-\alpha+\lambda_1) 
          \Gamma(\frac12+\alpha+\lambda_2)}
          -(\lambda_1\leftrightarrow\lambda_2)\Big)
    , \end{align}
    \begin{align}
 &\int_1^\infty{\bf Z}_{0,\lambda_1}(w){\bf
  Z}_{0,\lambda_2}(w)\dd 2 w =
                                              \frac{2^{\lambda_1+\lambda_2+2} (\psi(\tfrac12 + \lambda_1) - \psi(\tfrac12 + \lambda_2))}{\pi (\lambda_1^2-\lambda_2^2) \Gamma( \tfrac12 + \lambda_1) \Gamma (\tfrac12 + \lambda_2)}, \\
& \int_1^\infty{\bf Z}_{\alpha,\lambda}(w)^2(w^2-1)^\alpha\dd 2 w 
= \frac{2^{2 \lambda } (\psi(\tfrac12 + \alpha + \lambda) 
- \psi(\tfrac12 -\alpha + \lambda))
}{\lambda \sin\pi\alpha \Gamma(\tfrac12 -\alpha + \lambda)
\Gamma(\tfrac12+\alpha+\lambda)}, 
 \\
&\int_1^\infty{\bf Z}_{0,\lambda}(w)^2 \dd 2 w = \frac{2^{2 \lambda+1}
\psi'(\tfrac12 + \lambda)}{\pi \lambda \Gamma(\tfrac12 + \lambda)^2}.
    \end{align}
\end{theorem}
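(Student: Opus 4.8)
My plan is to follow the proof of Theorem~\ref{thm:S_integrals} almost verbatim, now working on the other natural interval $[1,\infty[$ with weight $(w^2-1)^\alpha$, with the functions $\mathbf{S}_{\alpha,\ii\beta_i}$ replaced by $\mathbf{Z}_{\alpha,\lambda_i}$ and the connection formula \eqref{formu2} replaced by \eqref{formu1}. First I would rewrite \eqref{gege0} on $[1,\infty[$ in the self-adjoint form $\partial_w\big((w^2-1)^{\alpha+1}\partial_w f\big)=\big(\lambda^2-(\alpha+\tfrac12)^2\big)(w^2-1)^\alpha f$ and integrate by parts (keeping the factor $2$ from the variable $2w$), obtaining
\[
(\lambda_1^2-\lambda_2^2)\int_1^\infty\mathbf{Z}_{\alpha,\lambda_1}\mathbf{Z}_{\alpha,\lambda_2}(w^2-1)^\alpha\dd 2w
= 2\lim_{w\searrow1}(w^2-1)^{\alpha+1}\big(\mathbf{Z}_{\alpha,\lambda_1}\mathbf{Z}_{\alpha,\lambda_2}'-\mathbf{Z}_{\alpha,\lambda_1}'\mathbf{Z}_{\alpha,\lambda_2}\big),
\]
the boundary term at $w=\infty$ being absent because $\mathbf{Z}_{\alpha,\lambda}(w)\sim\tfrac1{\Gamma(1+\lambda)}w^{-\frac12-\alpha-\lambda}$ makes the bracket $O(w^{-2-2\alpha-\lambda_1-\lambda_2})$, so the whole expression is $O(w^{-\lambda_1-\lambda_2})\to0$ for $\re(\lambda_i)>0$.

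To evaluate the limit at $w=1$ I would insert the connection formula \eqref{formu1}, which writes each $\mathbf{Z}_{\alpha,\lambda_i}$ as a linear combination $c_1(\lambda_i)\mathbf{S}_{\alpha,\lambda_i}+c_2(\lambda_i)\tfrac{\mathbf{S}_{-\alpha,-\lambda_i}}{(w^2-1)^\alpha_\bullet}$ of the solution regular at $w=1$ (with value $1/\Gamma(\alpha+1)$ there) and the solution behaving like $\tfrac{2^{-\alpha}}{\Gamma(1-\alpha)}(w-1)^{-\alpha}$ there. Expanding the Wronskian bilinearly, the regular$\times$regular term is annihilated by the prefactor $(w^2-1)^{\alpha+1}\to0$; in the singular$\times$singular term the most singular $(w-1)^{-2\alpha-1}$ piece cancels because the coefficient of $(w-1)^{-\alpha}$ is the same ($\lambda$-independent) in both factors, leaving a remainder $O((w-1)^{1-\alpha})$ that vanishes precisely because $|\re(\alpha)|<1$. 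Only the two cross terms survive, and each of them tends to the $\lambda$-independent constant $\mp\tfrac{2\sin(\pi\alpha)}{\pi}$ (using $\Gamma(\alpha)\Gamma(1-\alpha)=\pi/\sin(\pi\alpha)$). Hence the boundary term at $1$ equals $\tfrac{2\sin(\pi\alpha)}{\pi}$ times the antisymmetric combination $c_2(\lambda_1)c_1(\lambda_2)-c_1(\lambda_1)c_2(\lambda_2)$ of the connection coefficients of \eqref{formu1}; substituting these explicitly and simplifying is then a routine calculation that yields \eqref{formo2}.

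The three remaining identities I would obtain by the \hospital{} rule, exactly as for the $\mathbf{S}$-integrals: the $\alpha=0$ formula from \eqref{formo2} by letting $\alpha\to0$ (both $1/\sin(\pi\alpha)$ and the bracketed $\Gamma$-difference vanish there), the diagonal formula $\lambda_1=\lambda_2=\lambda$ by letting $\lambda_1\to\lambda_2$ (the pole of $1/(\lambda_1^2-\lambda_2^2)$ is cancelled by the vanishing bracket), and the last one by performing both limits in succession. I expect the only real obstacle to be the boundary term at $w=1$: one must check carefully that the regular$\times$regular and singular$\times$singular contributions genuinely disappear in the limit---which is precisely where the hypothesis $|\re(\alpha)|<1$ is used, in the same way that $\re(\alpha)>-1$ at $w=1$ and the connection formula \eqref{formu2} at $w=-1$ handled the two endpoints in Theorem~\ref{thm:S_integrals}---and that the surviving cross terms are indeed $\lambda$-independent constants. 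A secondary subtlety is that \eqref{formu1} itself degenerates at $\alpha\in\zz$, so throughout one should keep $\alpha$ generic and pass to $\alpha=0$ only at the very end via \hospital{}.
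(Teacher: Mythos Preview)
Your proposal is correct and follows essentially the same route as the paper's own proof: Green's identity on $[1,\infty[$ reduces the bilinear integral to the boundary Wronskian at $w=1$ (the term at $\infty$ vanishing because $\re(\lambda_i)>0$), the connection formula \eqref{formu1} is used to decompose the $\mathbf{Z}$'s into the regular and singular $\mathbf{S}$-type solutions, and the remaining three identities are obtained by \hospital{}. Your discussion is in fact more detailed than the paper's, which simply says ``Then we use the connection formula \eqref{formu1} to decompose the $\mathbf{Z}$ functions into the $\mathbf{S}$ functions''; your analysis of the regular$\times$regular, singular$\times$singular, and cross contributions (and the role of the hypothesis $|\re(\alpha)|<1$ in killing the first two) is exactly what lies behind that sentence. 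One minor simplification: for the singular$\times$singular piece you can avoid tracking the cancellation of the leading $(w-1)^{-2\alpha-1}$ term by observing directly that
\[
(w^2-1)^{\alpha+1}\Big(\tfrac{\mathbf{S}_{-\alpha,\lambda_1}}{(w^2-1)^\alpha}\partial_w\tfrac{\mathbf{S}_{-\alpha,\lambda_2}}{(w^2-1)^\alpha}-\partial_w\tfrac{\mathbf{S}_{-\alpha,\lambda_1}}{(w^2-1)^\alpha}\tfrac{\mathbf{S}_{-\alpha,\lambda_2}}{(w^2-1)^\alpha}\Big)
=(w^2-1)^{1-\alpha}\big(\mathbf{S}_{-\alpha,\lambda_1}\mathbf{S}_{-\alpha,\lambda_2}'-\mathbf{S}_{-\alpha,\lambda_1}'\mathbf{S}_{-\alpha,\lambda_2}\big),
\]
which is manifestly $O((w-1)^{1-\alpha})$ since the $\mathbf{S}_{-\alpha,\lambda_i}$ are analytic at $w=1$.
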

\begin{proof}
\begin{align*}
    &(\lambda_1^2-\lambda_2^2) \int_1^\infty 
{\bf Z}_{\alpha,\lambda_1}(w){\bf 
      Z}_{\alpha,\lambda_2}(w)(w^2-1)^\alpha\dd w
 \\\notag=&
 \int_1^\infty
{\bf Z}_{\alpha,\lambda_1}(w) \partial_w (w^2-1)^{\alpha +1} 
\partial_w{\bf Z}_{\alpha,\lambda_2}(w)  \dd w
        \\ \notag &- \int_1^\infty
        \Big(\partial_w (w^2-1)^{\alpha+1}  
        \partial_w{\bf Z}_{\alpha,\lambda_1}(w) \Big)
        {\bf Z}_{\alpha,\lambda_2}(w)\dd w
\\
    =&\lim_{w\searrow1}\Big(
{\bf Z}_{\alpha,\lambda_1}(w) (w^2-1)^{\alpha +1} 
\partial_w{\bf Z}_{\alpha,\lambda_2}(w)
    - (w^2-1)^{\alpha+1} \big( \partial_w{\bf Z}_{\alpha,\lambda_1}(w)\big) 
    {\bf 
       Z}_{\alpha,\lambda_2}(w)\Big).
  \end{align*}
  Then we use the connection formula \eqref{formu1} to decompose the
  ${\bf Z}$ functions into the ${\bf S}$ functions. 
  The remaining formulas follow by the rule of de l'H\^opital.
   \end{proof}

\begin{remark}
The integrand ${\bf Z}_{\alpha,\lambda}(w)^2(w^2-1)^\alpha$ has 
asymptotics $\sim w^{-1-2\lambda}$ for $w\to\infty$ and therefore the case $\re(\lambda)\le0$ escapes our considerations.
\end{remark}

As before, the above integral formulas remain valid in the 
non-anomalous case $\alpha\in\cc\setminus\zz$ if the integral is 
replaced by the generalized integral. We compute the anomalous 
generalized integrals for $\alpha\in\zz$: 

\begin{theorem}
\label{thm:genZint}
Let $\alpha\in\zz$.  For $\re(\lambda_i)>0$, we have 
 \begin{align}
 \label{eq:gen_intZZ.}
 &  \frac{\pi}{2^{\lambda_1 + \lambda_2 +1}} \Big( \prod_{i=1}^2 \Gamma\big(\tfrac12+|\alpha|+\lambda_i\big) \Big)   \gen\int_1^\infty{\bf Z}_{\alpha,\lambda_1}(w){\bf
  Z}_{\alpha,\lambda_2}(w)(w^2-1)^\alpha\dd 2w
  \\ \notag
  = &  \Bigg(  \frac{(\frac12+\lambda_1)_{|\alpha|}(\frac12-\lambda_1)_{|\alpha|}}{\lambda_1^2-\lambda_2^2}
  \Big( -\ln(4) + \sum_{\pm} \psi\big(\tfrac12 \pm \alpha+\lambda_1\big) \Big)
  +(\lambda_1\leftrightarrow\lambda_2) 
    \Bigg)
  \\ \notag 
  &- (\tfrac12+\lambda_2)_{|\alpha|}(\tfrac12-\lambda_2)_{|\alpha|}
            \sum_{k=0}^{|\alpha|-1}
      \frac{(\frac12+\lambda_1)_k(\frac12-\lambda_1)_k}
{      (\frac12+\lambda_2)_{k+1}(\frac12-\lambda_2)_{k+1}}
 \\ \notag  &
\times \Big( -\psi(|\alpha|-k) -\psi(1+k) + \sum_{\pm}
   H_{|\alpha|-1-k}\big( \tfrac12-|\alpha| \pm \lambda_2 \big)  \Big).
\end{align}
 Moreover, if $\re(\lambda)>0$,
\begin{align}
\label{eq:genintZZ_limit.}
   & \frac{(-1)^{\alpha} \pi }{2^{2\lambda+1}}   \Big( \prod_{\pm} \Gamma\big(\tfrac12 \pm \alpha+\lambda\big) \Big)
     \gen\int_1^\infty{\bf Z}_{\alpha,\lambda}(w)^2(w^2-1)^\alpha\dd 2w 
 \\ \notag 
 =  & \frac{1}{2 \lambda} \sum_{\pm} \Big(
   \psi'(\tfrac12 \pm \alpha+\lambda) \mp H_{|\alpha|}(\tfrac12 \pm \lambda) \ln(4) \Big) \\
\notag&   + \sum_{k=0}^{|\alpha|-1} 
     \frac
   {\psi(\tfrac32+k+\lambda)
                 +\psi(-\tfrac12-k+\lambda)
   -\psi(|\alpha|-k)
   -\psi(1+k)
     }{\lambda^2-\big(\tfrac12+k\big)^2}.
 \end{align}
\end{theorem}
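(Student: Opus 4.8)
The plan is to prove \eqref{eq:gen_intZZ.} by dimensional regularization, in complete parallel with the proof of Theorem \ref{thm:genIntsSS} (and of Proposition \ref{prop:gen_int_mcdonald}); \eqref{eq:genintZZ_limit.} will then be obtained by letting $\lambda_1,\lambda_2\to\lambda$ and applying the \hospital{} rule, exactly as \eqref{eq:formo_integer1_limit1.} was obtained from \eqref{eq:formo_integer1.}. First I would reduce to $\alpha=m\in\nn_0$: applying ${\bf Z}_{\alpha,\lambda}(w)={\bf Z}_{-\alpha,\lambda}(w)/(w^2-1)^\alpha_\bullet$ to both factors shows that on $]1,\infty[$ the integrand ${\bf Z}_{\alpha,\lambda_1}(w){\bf Z}_{\alpha,\lambda_2}(w)(w^2-1)^\alpha$ is invariant under $\alpha\mapsto-\alpha$, while the right hand side of \eqref{eq:gen_intZZ.} depends on $\alpha$ only through $|\alpha|$.

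Next I would set up the dimensional regularization. For $\re(\alpha)>-1$ put
\begin{align*}
f(\alpha,w):=\frac{\pi}{2^{\lambda_1+\lambda_2+1}}\,\Gamma\big(\tfrac12+\alpha+\lambda_1\big)\Gamma\big(\tfrac12+\alpha+\lambda_2\big)\,{\bf Z}_{\alpha,\lambda_1}(w){\bf Z}_{\alpha,\lambda_2}(w)(w^2-1)^\alpha .
\end{align*}
By the connection formula \eqref{formu1} the singular part of ${\bf Z}_{\alpha,\lambda}$ at $w=1$ equals $(w-1)^{-\alpha}$ times a function analytic near $w=1$, so $f(\alpha,\cdot)$ is integrable in the generalized sense near $w=1$ (integrability near $\infty$ follows from $\re(\lambda_i)>0$) and depends holomorphically on $\alpha$. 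For $\alpha\notin\zz$ the generalized integral is non-anomalous, so by \eqref{formo2} and the holomorphy argument of Subsection \ref{ssc:dim_reg},
\begin{align*}
\gen\int_1^\infty f(\alpha,w)\dd 2w
=\frac{\pi}{(\lambda_1^2-\lambda_2^2)\sin\pi\alpha}\left(\frac{\Gamma(\frac12+\alpha+\lambda_1)}{\Gamma(\frac12-\alpha+\lambda_1)}-\frac{\Gamma(\frac12+\alpha+\lambda_2)}{\Gamma(\frac12-\alpha+\lambda_2)}\right),
\end{align*}
which extends meromorphically in $\alpha$ with a simple pole at $\alpha=m$. Using $\Gamma(\tfrac12+m+\lambda)/\Gamma(\tfrac12-m+\lambda)=(-1)^m(\tfrac12-\lambda)_m(\tfrac12+\lambda)_m$ I would read off the residue and the finite part at $\alpha=m$; the finite part yields precisely the $\psi(\tfrac12\pm\alpha+\lambda_i)$ contribution in the first line of \eqref{eq:gen_intZZ.}.

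Then I would isolate the non-integrable homogeneous terms of $f$ at $w=1$. By \eqref{formu1}, all negative powers of $w-1$ sit in the term where both ${\bf Z}$-functions contribute their singular branch; expanding one of them with \eqref{solu1} and the other with \eqref{solu1_form2} (to avoid having to expand $(w+1)^{-\alpha}$) one gets, just as in \eqref{summa},
\begin{align*}
\big(2(w-1)\big)^{\alpha}f^{\sing}(\alpha,w)
=\frac{\pi^2\,2^{2\alpha-2}}{\sin^2\pi\alpha}\,
{\bf F}\big(\tfrac12+\lambda_1,\tfrac12-\lambda_1;1-\alpha;\tfrac{1-w}{2}\big)\,
{\bf F}\big(\tfrac12-\alpha+\lambda_2,\tfrac12-\alpha-\lambda_2;1-\alpha;\tfrac{1-w}{2}\big).
\end{align*}
After turning each $1/\Gamma(1-\alpha+n)$ into $\Gamma(\alpha-n)\sin(\pi\alpha)/\pi$ by reflection, this is manifestly holomorphic in $\alpha$ near $m$, and the coefficient $f_{m-1}(\alpha)$ of $\big(2(w-1)\big)^{-\alpha+m-1}$ is a finite sum over $k=0,\dots,m-1$ of terms proportional to $\Gamma(\alpha-k)\Gamma(\alpha-m+1+k)(\tfrac12-\lambda_1)_k(\tfrac12+\lambda_1)_k(\tfrac12-\alpha-\lambda_2)_{m-1-k}(\tfrac12-\alpha+\lambda_2)_{m-1-k}$ — the exact analogue of the coefficient displayed after \eqref{summa}. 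At $\alpha=m$ the factor $\Gamma(\alpha-k)\Gamma(\alpha-m+1+k)$ collapses to $k!\,(m-1-k)!$, so $f_{m-1}(m)$ is a finite sum of Pochhammer products which, being a residue of the same meromorphic function as the residue already computed from \eqref{formo2}, equals the stated closed form — this is the hidden symmetrization identity noted in the Remark after Theorem \ref{thm:genIntsSS}. Differentiating $f_{m-1}$ in $\alpha$, the factor $2^{2\alpha-2}$ produces the $\ln 4$ terms, the Gamma factors the $\psi(m-k),\psi(k+1)$ terms, and the $\lambda_2$-Pochhammers the $H_{m-1-k}(\tfrac12-m\pm\lambda_2)$ terms; finally the identity $(\tfrac12-m\pm\lambda_2)_{m-1-k}=(\tfrac12\pm\lambda_2)_m/(\tfrac12\pm\lambda_2)_{k+1}$ (used already in the proof of Theorem \ref{thm:genIntsSS}) brings $f_{m-1}'(m)$ into the Pochhammer-ratio shape of the second, $\sum_k$-line of \eqref{eq:gen_intZZ.}.

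To assemble the result, note that the critical singular exponent $-\alpha+m-1$ decreases through $-1$ as $\alpha$ crosses $m$, so — as in the footnote to the proof of Theorem \ref{thm:genIntsSS} — the anomaly term is \emph{added}:
\begin{align*}
\gen\int_1^\infty f(m,w)\dd 2w=\fp_{\alpha\to m}\gen\int_1^\infty f(\alpha,w)\dd 2w+f_{m-1}'(m).
\end{align*}
Combining the finite part with $f_{m-1}'(m)$ gives \eqref{eq:gen_intZZ.}, and passing to the limit $\lambda_1,\lambda_2\to\lambda$ — which commutes with the generalized integral because the exponents of the non-integrable terms do not depend on $\lambda$ — together with the \hospital{} rule and the contiguous-$\psi$ identities, as in the passage from \eqref{eq:formo_integer1.} to \eqref{eq:formo_integer1_limit1.}, gives \eqref{eq:genintZZ_limit.}. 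I expect the main obstacle to be purely computational: keeping track of the numerous powers of $2$ and of the $\Gamma$-normalizations so that the prefactor in \eqref{eq:gen_intZZ.} emerges exactly as written, and resumming the $\psi$- and generalized-harmonic-number contributions into the stated compact form.
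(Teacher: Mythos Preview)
Your proposal is correct and follows essentially the same approach as the paper's proof: dimensional regularization via \eqref{formo2}, isolation of the singular part through the connection formula \eqref{formu1}, expansion of the two ${\bf S}_{-\alpha,\lambda_i}$ factors using \eqref{solu1} and \eqref{solu1_form2} respectively, explicit differentiation of $f_{m-1}(\alpha)$, and the \hospital{} limit for $\lambda_1=\lambda_2$. The only cosmetic difference is the order of operations: you invoke the $\alpha\mapsto-\alpha$ symmetry first and then regularize around $\alpha=m>0$ with the normalization $\Gamma(\tfrac12+\alpha+\lambda_i)$, whereas the paper normalizes by $\Gamma(\tfrac12-\alpha+\lambda_i)\,2^{2\alpha+1-\lambda_1-\lambda_2}$, regularizes around $\alpha=-m$, and invokes the symmetry at the end. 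As a consequence the $\ln 4$ contribution arises in your scheme from differentiating the $2^{2\alpha-2}$ in $f_{m-1}(\alpha)$, while in the paper it comes from differentiating $2^{2\alpha+2}$ in the closed form \eqref{kokl}; the total is of course the same.
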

\begin{proof}
The proof works similar to the proof with the anomalous generalized 
${\bf S}$-integrals.
For  $\alpha\in\cc$ we set
\begin{align} 
f(w,\alpha) =\frac{\Gamma(\tfrac12-\alpha+\lambda_1) \Gamma(\tfrac12-\alpha+\lambda_2)}{2^{\lambda_1+\lambda_2-2\alpha-1}}
{\bf Z}_{\alpha,\lambda_1}(w) {\bf Z}_{\alpha,\lambda_2}(w)(w^2-1)^\alpha.\end{align}
  Then for $\alpha\not\in\zz$,
  \begin{align}\label{kokl}
\gen \int_1^\infty f(w,\alpha)\dd 2 w=
  \frac{2^{2\alpha+2}}{(\lambda_1^2-\lambda_2^2)\sin \pi\alpha}
    \Bigg(\frac{\Gamma(\frac12-\alpha+\lambda_2)}{\Gamma(\frac12+\alpha+\lambda_2)}
    -(\lambda_1\leftrightarrow\lambda_2) 
    \Bigg).\end{align}
  Let $m\in\zz$. The residue and the finite part of
\eqref{kokl} at $\alpha=m$ are
\begin{align}
  \res(m)&=
  \frac{(-1)^m2^{2m+2}}{\pi(\lambda_1^2-\lambda_2^2)}
    \Bigg(\frac{\Gamma(\frac12-m+\lambda_2)}{\Gamma(\frac12+m+\lambda_2)}
    -(\lambda_1\leftrightarrow\lambda_2) 
        \Bigg),
\end{align}
\begin{align}
  &\fp    \int_1^\infty f(w,\alpha)\dd 2 w=
                                          \frac{(-1)^m2^{2m+2}}{\pi(\lambda_1^2-\lambda_2^2)}\\
 \times                            \Bigg(\frac{\Gamma(\frac12-m+\lambda_2)}{\Gamma(\frac12+m+\lambda_2)} & \big(\ln(4) -\psi(\tfrac12-m+\lambda_2) -\psi(\tfrac12+m+\lambda_2)\big)
    -(\lambda_1\leftrightarrow\lambda_2) 
        \Bigg).\notag
  \end{align}
  For $\re(\alpha)\leq0$, the singular part of $f(\alpha,w)$ is
  contained in
  \begin{align}\notag
&f^\sing(\alpha,w)=\frac{\pi}{\sin^2\pi\alpha}{\bf S}_{\alpha,\lambda_1}(w) {\bf S}_{\alpha,\lambda_2}(w)(w^2-1)^\alpha\\
=&  \frac{ 2^{\alpha} \pi(w-1)^\alpha }{\sin^2\pi\alpha}
 {\bf 
  F}\Big(\frac12+\lambda_1,\frac12-\lambda_1,\alpha+1,\frac{1-w}{2}\Big)\notag
  \\ \notag &\times
 {\bf 
  F}\Big(\frac12+\alpha+\lambda_2,\frac12+\alpha-\lambda_2,\alpha+1,\frac{1-w}{2}\Big)  
\\\notag  =  &\frac{ 2^{\alpha} 
               (w-1)^{\alpha} }{\pi}
           \Bigg(\sum_{k=0}^\infty\frac{(\frac12+\lambda_1)_k 
         (\frac12-\lambda_1)_k\Gamma(-\alpha-k)}{k!}\Big(\frac{w-1}{2}\Big)^k\Bigg)\\
     &\times        \Bigg(\sum_{j=0}^\infty\frac{(\frac12+\alpha+\lambda_2)_j 
           (\frac12+\alpha-\lambda_2)_j\Gamma(-\alpha-j)}{j!}\Big(\frac{w-1}{2}\Big)^j\Bigg).
\label{summa1}  \end{align}
Let $m$ be a positive integer. The coefficient of \eqref{summa1} at
$\big(2(w-1)\big)^{\alpha+m-1}$ is
\begin{align}
& f_{m-1}(\alpha) 
\\ \notag = &\frac{2^{-2m+2}}{\pi} 
\sum_{k=0}^{m-1}\frac{(\frac12+\lambda_1)_k (\frac12-\lambda_1)_k
  (\frac12+\alpha+\lambda_2)_{m-1-k}
  (\frac12+\alpha-\lambda_2)_{m-1-k}}{k!(m-1-k)!}
\\\notag& \times\Gamma(-\alpha-k)\Gamma(-\alpha-m+1+k)  .
\end{align}
Now
\begin{align*}
&\frac{\dd}{\dd\alpha}f_{m-1}(\alpha)\Big|_{\alpha=-m}=\frac{2^{-2m+2}}{\pi}
\\\notag&\times  \sum_{k=0}^{m-1}(\tfrac12+\lambda_1)_k (\tfrac12-\lambda_1)_k
  (\tfrac12-m+\lambda_2)_{m-1-k}
                (\tfrac12-m-\lambda_2)_{m-1-k}\\ \notag
  &\times\Big(H_{m-1-k}(\tfrac12-m+\lambda_2)+H_{m-1-k}(\tfrac12-m-\lambda_2)
    -\psi(m-k)-\psi(1+k)\Big)
\end{align*}
Then we use
\begin{align}
 \big( \tfrac12 -m+\lambda_2 \big)_{m-1-k} 
 \big( \tfrac12 -m-\lambda_2 \big)_{m-1-k}=\frac{(\frac12+\lambda_2)_m (\frac12-\lambda_2)_m}{(\frac12+\lambda_2)_{k+1}(\frac12-\lambda_2)_{k+1}}.
  \end{align}

Thus we find the integral at negative integers:
\begin{align*}
  &\gen\int_1^\infty{\bf Z}_{-m,\lambda_1}(w)
 {\bf Z}_{-m,\lambda_2}(w) (w^2-1)^{-m}\dd 2w \\\notag
    = &\frac{2^{\lambda_1+\lambda_2-2m+1}}{\Gamma(\frac12+m+\lambda_1) \Gamma(\frac12+m+\lambda_2)}
        \gen\int_{-1}^1f(-m,w)\dd 2w\\\notag
  =& \frac{2^{\lambda_1+\lambda_2-2m+1}}{\Gamma(\frac12+m+\lambda_1) \Gamma(\frac12+m+\lambda_2)}
     \Bigg( \fp\int_{-1}^1f(-m,w)\dd 2w - \frac{\dd}{\dd\alpha}f_{m-1}(\alpha)\Big|_{\alpha=-m}\Bigg).\end{align*}
This proves
\eqref{eq:gen_intZZ.} for negative integers. But \eqref{eq:gen_intZZ.}
is invariant with respect to the flip of the sign of $\alpha$, because
\begin{align}
{\bf Z}_{-m,\lambda_1}(w) 
{\bf Z}_{-m,\lambda_2}(w) (w^2-1)^{-m}=
{\bf Z}_{m,\lambda_1}(w) 
{\bf Z}_{m,\lambda_2}(w) (w^2-1)^{m}
.\end{align}
Thus \eqref{eq:gen_intZZ.} has been proven.

To prove
\eqref{eq:genintZZ_limit.},
we set $\lambda=\lambda_1=\lambda_2$ in
\eqref{eq:gen_intZZ.}, using the
de l'H\^opital rule where needed:
\begin{align}
\label{eq:genintZZ_limit..}
   &\gen\int_1^\infty{\bf Z}_{\alpha,\lambda}(w){\bf
  Z}_{\alpha,\lambda}(w)(w^2-1)^\alpha\dd 2w 
   = 
  \frac{(-1)^{\alpha} 2^{2\lambda+1}}{\pi\Gamma\big(\tfrac12-\alpha+\lambda\big)
        \Gamma\big(\tfrac12+\alpha+\lambda\big)}
 \\ \notag 
  &\times \Bigg(
  \frac{\psi'\big(\tfrac12-\alpha+\lambda\big)
  +\psi'\big(\tfrac12+\alpha+\lambda\big)
    }{2\lambda}\\\notag
 &+ \frac{\big(\psi(\frac12-\alpha+\lambda)+\psi(\frac12+\alpha+
  \lambda)-\ln(4)\big)(-H_{|\alpha|}(\frac12-\lambda)+H_{|\alpha|}(\frac12+\lambda)\big)}
  {2\lambda} 
     \\&+
  \sum_{k=0}^{|\alpha|-1} 
     \frac{
  H_{|\alpha|-1-k}(\tfrac12-|\alpha|-\lambda)
                 +H_{|\alpha|-1-k}\big(\tfrac12-|\alpha|+\lambda\big)
   -\psi(|\alpha|-k)
   -\psi(1+k)
   }{\lambda^2-\big(\tfrac12+k\big)^2} 
    \Bigg).\notag
 \end{align}
Then we use
\begin{align}
\label{eq:sum_inverse_squares_lambda}
 \sum_{k=0}^{|\alpha|-1}\frac{1}{\lambda^2 -\big(\tfrac12+k\big)^2} 
  =&\frac{H_{|\alpha|}(\frac12+\lambda)-H_{|\alpha|}(\frac12-\lambda)}
{2\lambda},
\end{align}

\begin{align}
  \psi(\tfrac12-|\alpha|+\lambda)+H_{|\alpha|-1-k}(\tfrac12-|\alpha|+\lambda)
  &=\psi(-\tfrac12-k+\lambda),
\end{align}
\begin{align}
 &\psi(\tfrac12+|\alpha|+\lambda)+H_{|\alpha|-1-k}(\tfrac12-|\alpha|-\lambda)
 \\ \notag =&   \psi(\tfrac12+|\alpha|+\lambda)-H_{|\alpha|-1-k}(\tfrac32+k+\lambda) = \psi(\tfrac32+k+\lambda).
  \end{align}
  
 \end{proof}
  
\subsection{Asymptotics of bilinear integrals}

The bilinear (generalized) integrals for Gegenbauer functions converge to the
corresponding integrals for Macdonald functions consistently with
\eqref{asym3} and \eqref{asym1}. This is relatively straightforward in
the non-anomalous case. It is also true in the anomalous case, because we have 
 chosen  the right variables. Otherwise there would be a discrepancy.

\begin{theorem}
 \label{thm:asymp_genintSZ}
For $\beta \to \infty$ with fixed $\alpha$ satisfying $\re(\alpha)>-1$ we have
 \begin{align}
 \label{eq:Sint_asymp} 
&\frac{\pi^2 {\e}^{-2\pi\beta} \beta^{2\alpha}}{2^{2\alpha}}
  \;\gen\int_{-1}^1{\bf S}_{\alpha,\ii\beta}(w)^2(1-w^2)^\alpha\dd 2 w\\
  = &  \Big(1+\mathcal{O}\big(\tfrac{1}{\beta}\big)\Big)
    \;\gen\int_0^\infty  K_{\alpha}(\beta r)^2 2r \dd r,
  \notag
\end{align} 
For $\lambda \to \infty$ and all $\alpha \in \cc$ we have
\begin{align}
  \label{eq:Zint_asymp}
&\frac{\pi \Gamma\big(\tfrac12+\alpha+\lambda\big)^2
  }{2^{2\lambda+1}\lambda^{2\alpha}}
  \;\gen\int_1^\infty{\bf Z}_{\alpha,\lambda}(w)^2(w^2-1)^\alpha\dd 2 w \\
 = & \Big(1+\mathcal{O}\big(\tfrac{1}{\lambda}\big)\Big)
    \;\gen\int_0^\infty  K_{\alpha}(\lambda r)^2 2r \dd r  .
\notag  \end{align}
\end{theorem}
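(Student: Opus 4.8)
The plan is to deduce both asymptotic statements by comparing the explicit generalized bilinear integrals computed earlier with the explicit generalized integral of $K_\alpha^2$, treating the non-anomalous case ($\alpha \notin \zz$) and the anomalous case ($\alpha \in \zz$) separately, and then observing that the anomalous formulas are the limits of the non-anomalous ones. First I would recall that for $\re(\alpha)>-1$, $\alpha \notin \nn$ (resp.\ $\alpha\notin\zz$), the left-hand sides of \eqref{eq:Sint_asymp} and \eqref{eq:Zint_asymp} are given in closed form by Theorem \ref{thm:S_integrals} (the formula for $\int_{-1}^1 {\bf S}_{\alpha,\ii\beta}^2(1-w^2)^\alpha \dd 2w$) and by the Theorem preceding Theorem \ref{thm:genZint} (the formula for $\int_1^\infty {\bf Z}_{\alpha,\lambda}^2(w^2-1)^\alpha \dd 2w$), while the right-hand side is $\gen\int_0^\infty K_\alpha(\beta r)^2 2r\dd r = \pi\alpha /(\beta^2 \sin\pi\alpha)$ by \eqref{int3} together with Proposition \ref{prop:genInts_scaling} applied with the variable $r^2$. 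After multiplying by the stated prefactors one is left with a ratio of Gamma and digamma expressions, and the claim reduces to showing this ratio is $1 + \mathcal O(\beta^{-1})$ (resp.\ $1+\mathcal O(\lambda^{-1})$). This is a routine application of the asymptotic expansions $\Gamma(z+a)/\Gamma(z+b) = z^{a-b}(1+\mathcal O(z^{-1}))$ and $\psi(z) = \ln z + \mathcal O(z^{-1})$ for large $|z|$ in the relevant sectors (here $z = \tfrac12 + \alpha \pm \ii\beta$, so $|z|\to\infty$ while $\arg z$ stays bounded away from $\pi$), combined with $\cosh(\pi\beta) \sim \tfrac12 \e^{\pi\beta}$; the differences of digammas that appear in the numerators contribute the leading constant after division by the appropriate factor of $\beta$ or $\lambda$.

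For the anomalous case $\alpha \in \nn$ (for \eqref{eq:Sint_asymp}) and $\alpha \in \zz$ (for \eqref{eq:Zint_asymp}), I would instead start from the explicit formulas \eqref{eq:formo_integer1_limit1.} and \eqref{eq:genintZZ_limit.}, together with the anomalous Macdonald integral \eqref{int4a.}, namely $\gen\int_0^\infty K_\alpha(\beta r)^2 2r\dd r = (-1)^\alpha \beta^{-2}\big(1 + |\alpha|\ln(\beta^2/4) + 2|\alpha|(1-\psi(1+|\alpha|))\big)$. The key observation, emphasized in the paragraph preceding Theorem \ref{thm:asymp_genintSZ}, is that the variable $2w$ (equivalently $r^2$) has been chosen precisely so that the non-integrable exponents match on the two sides; concretely, the singular terms of ${\bf S}_{\alpha,\ii\beta}^2(1-w^2)^\alpha$ in powers of $1+w$ and of $K_\alpha(\beta r)^2 2r$ in powers of $r^2$ have the same orders, so the anomalous (logarithmic/derivative) contributions are compatible under the limit $\beta\to\infty$. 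I would then verify that the right-hand side of \eqref{eq:formo_integer1_limit1.}, multiplied by $\pi^2\e^{-2\pi\beta}\beta^{2\alpha}2^{-2\alpha}$ times the reciprocal of the prefactor in \eqref{eq:formo_integer1_limit1.}, tends to $(-1)^\alpha\big(1+\alpha\ln(\beta^2/4)+\dots\big)$; the $\sum_{k=0}^{\alpha-1}(k+\tfrac12)^{-2}$-type sums produce the $\ln\beta$ and the constant via $\psi(-\tfrac12-k\pm\ii\beta) = \ln(\pm\ii\beta) + \mathcal O(\beta^{-1})$, and the $\psi'(\tfrac12+\alpha\pm\ii\beta)$ term contributes at order $\mathcal O(\beta^{-1})$. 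The argument for \eqref{eq:Zint_asymp} in the anomalous case is entirely parallel, using \eqref{eq:genintZZ_limit.} and the same digamma asymptotics with $\ii\beta$ replaced by $\lambda$.

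The main obstacle, and the only place requiring genuine care rather than bookkeeping, is controlling the \emph{uniformity and the error term} in these Gamma/digamma asymptotics so as to land exactly on a multiplicative $1+\mathcal O(\beta^{-1})$ (rather than an additive error): one must check that after the cancellations the leading term is genuinely nonzero and that all subleading pieces are suppressed by a full power of $\beta^{-1}$ (or $\lambda^{-1}$), in particular that in the anomalous case the $\ln\beta$ growth is reproduced with the correct coefficient so that the ratio to $\gen\int_0^\infty K_\alpha^2 2r\dd r$ is bounded. A clean way to organize this is to first establish \eqref{eq:Sint_asymp} and \eqref{eq:Zint_asymp} for $\alpha$ in a complex neighborhood avoiding the integers, with error estimates locally uniform in $\alpha$ (this follows from Morera-type holomorphy already invoked after \eqref{anla}), and then pass to integer $\alpha$ by continuity, since both sides and the error bound depend holomorphically on $\alpha$ near each integer; this sidesteps re-deriving the anomalous asymptotics by hand and makes the "right choice of variables" remark do the work it is meant to do.
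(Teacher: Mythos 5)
Your main line of argument---substituting the closed-form generalized integrals from Theorem \ref{thm:S_integrals}, the corresponding $\mathbf Z$-integral theorem, \eqref{eq:formo_integer1_limit1.} and \eqref{eq:genintZZ_limit.}, and comparing with \eqref{int3} resp.\ \eqref{int4a.} via Stirling-type asymptotics of $\Gamma$ and $\psi$---is exactly the paper's proof, and it goes through as you describe, with the non-anomalous and anomalous cases handled separately.

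One caution: the ``clean way'' you propose in your final paragraph is not valid. The generalized integral is \emph{not} a continuous (let alone holomorphic) function of $\alpha$ at the integers: by \eqref{eq:dimreg_anomalous} its value at an integer differs from the finite part of the analytic continuation by the extra term $-f'_{m-1}(-m)$, and the analytic continuation itself generically has a simple pole there (see the first Example in Section \ref{sec:gen_ints}, where the limit of the generalized integrals differs from the generalized integral of the limit). In addition, the non-anomalous closed forms carry the factor $1/\sin\pi\alpha$, so the $\mathcal O(\beta^{-1})$ error term cannot be made locally uniform in $\alpha$ across an integer. Hence the anomalous case genuinely requires the separate hand computation sketched in your second paragraph and cannot be recovered from the non-anomalous case by continuity in $\alpha$; this is precisely why the paper carries out both computations explicitly.
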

\begin{proof}
Let us first consider the non-anomalous case, that is
 $\alpha\in\cc\setminus\zz$.
For the $\mathbf{S}$-functions, we have
\begin{align*}
  &\LHS\eqref{eq:Sint_asymp}  \\
= & \frac{2\pi^2 \beta^{2\alpha}}{ {\e}^{2\pi\beta}}
\frac{  \ii \cosh(\pi\beta) 
    \big( \psi\big(\frac12 +\alpha + \ii \beta \big) 
                - \psi\big(\frac12 +\alpha - \ii \beta \big) 
           + \psi\big(\frac12 - \ii \beta \big) 
                - \psi\big(\frac12 + \ii \beta \big) \big)}
    {\beta\sin\pi\alpha \,\Gamma(\frac12+\alpha-\ii\beta)
    \Gamma(\frac12+\alpha+\ii\beta)}
\\
\,=\,  & 
\frac{   \pi \alpha}
    {\beta^2\sin\pi\alpha }
         \Big(1+\mathcal{O}\big(\tfrac{1}{\beta}\big) \Big)\,=
         \,  \RHS\eqref{eq:Sint_asymp} ,
\end{align*}
where we  used
\begin{align*}2 {\e}^{-\pi\beta} \cosh(\pi\beta) &
=  1 +\mathcal{O}\big(\tfrac{1}{\beta}\big),\\
\Gamma(\tfrac12+\alpha-\ii\beta)
\Gamma(\tfrac12+\alpha+\ii\beta)&=2\pi\beta^{2\alpha}\e^{-\beta\pi}(1+O(\tfrac1\beta)),\\
\psi(\tfrac12+\alpha+\ii\beta)- \psi(\tfrac12+\alpha-\ii\beta)
+\psi(\tfrac12-\ii\beta)-
\psi(\tfrac12+\ii\beta)&=-\frac{2 \ii\alpha}{\beta}+O(\tfrac1{\beta^2}).
\end{align*}
For the $\mathbf{Z}$-functions, we have
 \begin{align*}  
   &\LHS\eqref{eq:Zint_asymp}
  =  
  \frac{\pi \Gamma\big(\tfrac12+\alpha+\lambda\big)
  \big( \psi\big(\tfrac12+\alpha+\lambda\big)
       - \psi\big(\tfrac12-\alpha+\lambda\big) \big) 
  }{2 \lambda^{2\alpha+1} 
  \sin\pi\alpha\,\Gamma\big(\tfrac12-\alpha+\lambda\big)}
\\  = &\frac{\pi \alpha  }{\lambda^2 \sin\pi\alpha}
\Big(1+\mathcal{O}\big(\tfrac{1}{\lambda}\big)\Big)\,=\, \RHS\eqref{eq:Zint_asymp}
 \end{align*}
 where we used
 \begin{align*}\frac{\Gamma(\frac12+\alpha+\lambda)}{\Gamma(\frac12-\alpha+\lambda)}&=\lambda^{2\alpha}(1+O(\tfrac1\lambda)),\\[1.5ex]
\psi(\tfrac12+\alpha+\lambda)-\psi(\tfrac12-\alpha+\lambda)&=\frac{2\alpha}{\lambda}+O(\tfrac1{\lambda^2}) .\end{align*}

In the anomalous case, that is $\alpha\in \nn$, we have
 \begin{align}\notag
   &\LHS\eqref{eq:Sint_asymp}
 \\\notag 
= & \frac{(-1)^\alpha 4 \cosh(\pi\beta)\e^{-2\pi\beta}\beta^{2\alpha}}{\pi
\Gamma(\frac12+\alpha+\ii\beta)\Gamma(\frac12+\alpha-\ii\beta)}
      \Bigg( 
       \frac{\ii }{2 \beta } \Big(
       \psi'\big(\tfrac{1}{2}+\alpha +\ii \beta\big) 
    - \psi'\big(\tfrac{1}{2}+\alpha -\ii \beta\big)   
    \Big) 
    \\   &\notag+
  \sum_{k=0}^{|\alpha |-1} 
 \frac{\psi \big(-\tfrac12-k+\ii \beta \big) 
    +  \psi\big(-\tfrac12-k-\ii \beta \big) 
    -\psi(|\alpha |-k)-\psi(1+k) - \ln 4 
 }{\big(\tfrac12 +k\big)^2+\beta^2
  }
           \Bigg)\\\notag
   =&(-1)^\alpha\Big(\frac{1}{\beta^2}-\frac{|\alpha|}{\beta^2}\ln(4)+\sum_{k=0}^{|\alpha|-1}\frac{\big(2\ln(\beta)-\psi(|\alpha|-k)-\psi(1+k)\big)}{\beta^2}\Big)\Big(1+O\big(\tfrac{1}{\beta}\big)\Big)\\\notag
   =&\RHS\eqref{eq:Sint_asymp},
\end{align}
where in the last equality we used \eqref{induction}. Similarly, for $\mathbf{Z}$ functions
\begin{align}\notag
  &\LHS\eqref{eq:Zint_asymp}
  \\\notag
  =&\frac{(-1)^{\alpha}\Gamma(\frac12+\alpha+\lambda)}{\Gamma\big(\tfrac12-\alpha+\lambda\big)
\lambda^{2\alpha}  } \Bigg(
  \frac{\psi'\big(\tfrac12-\alpha+\lambda\big)
  +\psi'\big(\tfrac12+\alpha+\lambda\big)
    }{2\lambda}
 \\ \notag 
&+\frac{H_{|\alpha|}(\tfrac12-\lambda)-
    H_{|\alpha|}(\tfrac12+\lambda)}{2\lambda}\ln(4)\\
\notag&   + \sum_{k=0}^{|\alpha|-1} 
\frac{ \psi(\tfrac32+k+\lambda)
                 +\psi\big(-\tfrac12-k+\lambda\big)
   -\psi(|\alpha|-k)
   -\psi(1+k)}{\lambda^2-\big(\tfrac12+k\big)^2} 
        \Bigg) \\\notag
   =&(-1)^\alpha\Big(\frac{1}{\lambda^2}-\frac{|\alpha|}{\lambda^2}\ln(4)+\sum_{k=0}^{|\alpha|-1}\frac{\big(2\ln(\lambda)-\psi(|\alpha|-k)-\psi(1+k)\big)}{\lambda^2}\Big)\Big(1+O\big(\tfrac{1}{\lambda}\big)\Big)\\\notag
  =&\RHS\eqref{eq:Zint_asymp}.
 \end{align}
 \end{proof}

\appendix

\section{Around the Gamma function}
\label{app:Gamma}
The {\em Gamma function} and its logarithmic derivative, the {\em digamma
function}, play a central role in the theory of special
functions. In this section we collect properties of these functions,
which we will often use in our paper. We will also describe the
properties of the
{\em Pochhammer 
symbol} and {\em harmonic numbers}, which are also closely related to the
Gamma and digamma functions.

\subsection{Definitions}
Recall the definition of  the digamma function,
\begin{align}
 \psi(z)&:= \partial_z \ln\Gamma(z) 
 = \frac{\partial_z\Gamma(z)}{\Gamma(z)}.\label{digamma}
\end{align}
It is also useful to introduce the Pochhammer symbol,
\begin{align}
 (z)_k&:=\frac{\Gamma(z+k)}{\Gamma(z)}
\label{pochhammer1}
=\begin{cases}(z)(z+1)\cdots(z+k-1),& k\geq0,\\
\frac{1}{(z+k)(z+k+1)\cdots(z-1)},&k\leq0;\end{cases}
\end{align}
\begin{align}
 \textup{the shifted $k$th harmonic number}&&    
    H_k(z)& := \frac{1}{z} + \dots + \frac{1}{z+k-1},\label{haka}\\
 \textup{and the $k$th harmonic number}&&                          H_k&:=
   H_k(1)=\frac{1}{1} + \dots + \frac{1}{k}.\label{haka0}
\end{align}

\subsection{Basic properties}

\begin{align}
  \Gamma(z+1)&=z\Gamma(z),
 \\
 \frac{1}{\Gamma(1-z)\Gamma(z)}
             &= \frac{\sin\pi z}{\pi} ,
               \\
 \frac{1}{\Gamma\big(\tfrac12+z\big)\Gamma\big(\tfrac12-z\big)}
             &= \frac{\cos\pi z}{\pi},\\
  \frac{\Gamma(2\lambda+1)}{\Gamma(\frac12+\lambda)\Gamma(1+\lambda)}&=\frac{2^{2\lambda}}{\sqrt\pi}.\label{dupli}
\end{align}

$\frac{1}{\Gamma(z)}$ is an entire function with zeros at 
$0,-1,-2, \dots$ and derivative
 \begin{align}
   \partial_z\frac{1}{\Gamma(z)}&=-\frac{\psi(z)}{\Gamma(z)}.
 \end{align}
 The digamma function satisfies the following functional relations 
 \cite[Chapter 8.365]{GR}:
\begin{align}
 \psi(1+z)&=\psi(z)+\frac1z,\\
  \psi(z)-\psi(1-z)&=-\pi\cot(\pi z),\\
 \psi\big(\tfrac12+z\big)- \psi\big(\tfrac12-z\big)&=\pi\tan(\pi z),  \\
 2\psi(2z) &= 2\ln2+\psi(z)+\psi\big(z+\tfrac12\big), \\
 \psi(z+k)&=\psi(z)+H_k(z),\\
 \psi(1+k)&=-\gamma_{\textup{E}}+H_k.
 \label{psiha}
\end{align}
The following relations among the harmonic numbers and 
the Pochhammer symbol are immediate from there definitions:
\begin{align}
 H_{k+n}(z)&=H_n(z)+H_k(z+n),\\
  H_k(z)&=-H_k(1-z-k),\\
  (z)_k(z+k)_n&=(z)_{k+n},\\
 (z)_k &= (-1)^k(1-k-z)_k,\label{pochhammer}\\
  (z)_{-k}&=\frac{1}{(z-k)}_k=\frac{(-1)^k}{(1-z)_k},\\
  (\tfrac12+z)_k(\tfrac12-z)_k&=(-1)^k(\tfrac12+z-k)_{2k},\\
 \partial_z(z)_k&=H_k(z)(z)_k.
\end{align} 
The Pochammer symbol is also useful to expand powers around a different 
center:
\begin{align} 
 (1-z)^{-a}&=\sum\limits_{k=0}^\infty\frac{(a)_k}{k!}z^k,\ \ \ |z|<1.\label{double1}
\end{align}
The following identities follow by induction:
\begin{align}\label{induction}
  \sum_{k=0}^{m-1}\psi(1+k)&=m\big(\psi(1+m)-1\big),\\
  \sum_{k=0}^{m-1}H_k&=m\big(H_m-1\big).
                       \end{align}

\subsection{Special values}
We have \cite[Chapter 8.366]{GR}
\begin{align}
 \psi(1)=-\gamma_{\textup{E}},\qquad
 \psi\big(\tfrac12\big)=-\gamma_{\textup{E}}-2\ln2, \qquad
 \psi'\big(\tfrac12\big) = \frac{\pi^2}{2},
\end{align}
with the Euler-Mascheroni constant $\gamma_{\textup{E}}$. Moreover, 
we have
\begin{align}
(1/2)_k = \frac{\Gamma(k+\frac12)}{\sqrt\pi}=\frac{(2k)!}{2^{2k}k!}=
\frac{(2k-1)!}{2^{2k-1}(k-1)!}  \label{double} 
\end{align}
and 
\begin{align}
  \partial_z\frac{1}{\Gamma(z)}\Big|_{z=-n}&=(-1)^nn!,\quad n=0,1,2,\dots
\end{align}

\subsection{Asymptotics for large arguments}

Let $\epsilon>0$. Then for $|\arg(z)|<\pi-\epsilon$, as $|z|\to\infty$,
we have
\begin{align}\label{stirling}
  \Gamma(z)&=z^{z-\frac12}\e^{-z}\sqrt{2\pi}\Big(1  +\mathcal{O}\big(\tfrac{1}{z}\big)\Big),\\
 \ln \Gamma(z) &= z \ln z - z +\frac{1}{2} \ln \frac{2\pi}{z} 
                 +  \mathcal{O}\big(\tfrac{1}{z}\big),\label{stirling1}\\
  \psi(z)&=\ln(z)-\frac{1}{2z}+ \mathcal{O}\big(\tfrac{1}{z^2}\big),\label{stirling2}\\
    \psi'(z)&=\frac1z+\frac{1}{2z^2}+ \mathcal{O}\big(\tfrac{1}{z^3}\big).\label{stirling3}
\end{align}
\eqref{stirling} is the famous Stirling formula.
\eqref{stirling1} is obviously equivalent to
\eqref{stirling}. Formally, 
\eqref{stirling2} and \eqref{stirling3} follow from
\eqref{stirling1}  by differentiation. However, rigorously, we are not
allowed to differentiate asymptotic series, and therefore
\eqref{stirling2} and \eqref{stirling3} require an independent
proof. We can use for this the so-called 2nd Binet's identity
\cite{WW} and its derivatives:
\begin{align}
\ln\Gamma(z)&=\Big(z-{\frac12}\Big)\ln z-z+{\frac12}\ln 2\pi
+2\int_0^\infty
\frac{\arctan\frac{t}{z}}{\e^{2\pi t}-1}\dd t
,\label{Binet1}\\
\psi(z) &
=\ln z-\frac{1}{2z}-2\int_0^\infty\frac{t\dd t}{(z^2+t^2)(\e^{2\pi t}-1)}
,\label{Binet2}\\
\psi'(z) & =\frac{1}{z}+
\frac{1}{2z^2}
+4\int_0^\infty\frac{zt\dd t}{(z^2+t^2)^2(\e^{2\pi t}-1)}
.
\label{Binet3}\end{align}
Let us show e.g. that \eqref{Binet1} implies \eqref{stirling1}. Let
$0<2\theta<\pi$.
We have $|\arctan u|\leq c|u|$ for $|\arg u|<\theta$. We set
$t=\e^{\ii\theta}s$. Then we use
\begin{align}
\Bigg|\frac{\arctan\frac{t}{z}}{\e^{2\pi t}-1}\Bigg|\leq\frac{c}{|z|}
\frac{1}{\e^{2\pi s\cos\theta}}
\end{align}

One can present the asymptotics \eqref{stirling1}, \eqref{stirling2}
and \eqref{stirling3}
in a different equivalent form. Let us fix $\alpha$ and
let $|\arg\lambda|<\pi-\epsilon$. Then as $|\lambda|\to\infty$, we have
\begin{align}
 \ln \Gamma(\tfrac12+\alpha+\lambda) &= (\alpha+\lambda) \ln \lambda - \lambda +\frac{1}{2} \ln (2\pi )
                 +  \mathcal{O}\big(\tfrac{1}{\lambda}\big),\label{stirling1.}\\
  \psi(\tfrac12+\alpha+\lambda)&=\ln(\lambda)+\frac{\alpha}{\lambda}+ \mathcal{O}\big(\tfrac{1}{\lambda^2}\big),\label{stirling2.}\\
    \psi'(\tfrac12+\alpha+\lambda)&=\frac1\lambda-\frac{\alpha}{\lambda^2}+ \mathcal{O}\big(\tfrac{1}{\lambda^3}\big).\label{stirling3.}
\end{align}
  Here is a consequence of the Stirling formula \cite[Lemma 2.1]{CDT}:
\begin{align}
\label{eq:asymp_gamma_ratio_imaginary}
\frac{\Gamma(a+\lambda)}{\Gamma(b+\lambda)} 
&=  \lambda^{a-b} 
\Big(1  +\mathcal{O}\big(\tfrac{1}{\lambda}\big)\Big). 
\end{align}
Here is another consequence, where $\beta>0$ and $\beta\to\infty$:
\begin{align}
\Gamma(\alpha+\ii\beta) \Gamma(\alpha-\ii\beta)=2\pi\beta^{2\alpha-1}\e^{-\pi\beta}
\Big(1  +\mathcal{O}\big(\tfrac{1}{\beta}\big)\Big). 
\end{align}

\section{Other conventions}
\label{app:assoc_legendre}

This appendix is included to facilitate the comparison of our formulas 
with those contained in a part of the literature.

\subsection{Associated Legendre equation.}
In the literature (see for example \cite{CDT,NIST,Olver}), instead of
the Gegenbauer equation (\ref{gege0}), 
one often considers the {\em associated Legendre equation} given by the operator
\begin{align}\label{geg1}
&
(z^2-1)^{\frac{\alpha}{2}}\Big((1-z^2)\partial_z^2-2(1+\alpha )z\partial_z 
+\lambda ^2-\Big(\alpha +\frac{1}{2}\Big)^2\Big)
(z^2-1)^{-\frac{\alpha}{2}}
\\ \notag
=&
\partial_z(1-z^2)\partial_z-\frac{\alpha^2}{1-z^2}+\lambda^2-\frac{1}{4}.
\end{align}
In the standard literature the parameter
$\lambda$  is shifted by $\frac12$. Thus, with $\mu=\alpha$ and $\nu=\lambda-\frac12$, (\ref{geg1}) becomes
\begin{align}\label{asso}
(1-z^2)\partial_z^2-2z\partial_z-\frac{\mu^2}{1-z^2}+\nu(\nu+1).
\end{align}
Certain functions annihilated  by \eqref{asso}
are called {\em associated Legendre functions}.
In this subsection we show how  to convert the functions ${\bf S}$ and
${\bf Z}$  into associated Legendre functions. The material of this subsection will not be used elsewhere in this paper.

{\em The associated Legendre function of the 1st kind} are defined as
\begin{align}
{\bf P}_\nu ^\mu (z)&=\left(\frac{z+1}{z-1}\right)^{\frac{\mu }{2}}
{\bf F} \Big(-\nu ,\nu +1;1-\mu ;\frac{1-z}{2}\Big) \notag\\
&=\frac{2^\mu }{(z^2-1)^{\frac{\mu }{2}}}
{\bf F} \Big(1-\mu +\nu ,-\mu -\nu ;1-\mu ;\frac{1-z}{2}\Big)
\label{eq:ferrersP}\\
&=\frac{2^\mu }{(z^2-1)^{\frac{\mu }{2}}}
{\bf S}_{-\mu ,\nu +\frac12}(z). \notag\end{align}

The functions ${\bf P}_\nu ^\mu$ are well adapted to the halfline $[0,\infty[$.
On the interval $]-1,1[$ one prefers to use {\em Ferrer's
 associated Legendre function of the 1st kind}
\begin{align} \notag
{\rm P}_\nu ^\mu (z)
&=\left(\frac{z+1}{1-z}\right)^{\frac{\mu }{2}}
{\bf F} \Big(-\nu ,\nu +1;1-\mu ;\frac{1-z}{2}\Big)\\ \notag
&=\frac{2^\mu }{(1-z^2)^{\frac{\mu }{2}}}
{\bf F} \Big(1-\mu +\nu ,-\mu -\nu ;1-\mu ;\frac{1-z}{2}\Big)
\\ \label{ZPrelation}
&=\frac{2^\mu }{(1-z^2)^{\frac{\mu }{2}}}
{\bf S}_{-\mu ,\nu +\frac12}(z).\end{align}

 {\em The associated Legendre function of the 2nd kind} are given by
\begin{align}
\label{eq:ferrersQ}
{\bf Q}_\nu ^\mu (z)
&=\e^{\mu\ii\pi}\sqrt{\pi}\Gamma(\mu+\nu+1)
\frac{(z^2-1)^{\frac{\mu }{2}}}{2^{\nu +1}z^{\nu +\mu +1}} {\bf F} \Big(\frac{\nu +\mu +2}{2},\frac{\nu +\mu +1}{2};\nu +\frac{3}{2};z^{-2}\Big)
\\ \notag
&= \frac{\e^{\mu\ii\pi}\sqrt{\pi}\Gamma(\mu+\nu+1)
(z^2-1)^{\frac{\mu }{2}}}{2^{\nu +1}(1+z)^{\nu +\mu +1}\Gamma(\nu +\frac{3}{2})}  F \Big(\nu +1,\nu +\mu +1;2\nu +2;\frac{2}{z+1}\Big)
\\ \notag
&= \e^{\mu\ii\pi}\sqrt{\pi}\Gamma(\mu+\nu+1)\frac{(z^2-1)^{\frac\mu2}}{2^{\nu+1}}
  {\bf Z}_{\mu,\nu+\frac12}(z).
\end{align}

\subsection{Other conventions for Gegenbauer functions.}
One sometimes finds other conventions for the Gegenbauer functions in 
the literature \cite{Durand76, Durand19a,NIST}: 
\begin{align}
 C^\alpha_\lambda(z) &= \frac{\Gamma(\lambda+2\alpha)}{\Gamma(1+\lambda)\Gamma(2\alpha)}
 F\big(-\lambda,\lambda+2\alpha;\alpha+\tfrac12;\tfrac{1-z}{2}\big),
 \\
 D^\alpha_\lambda(z) &= \frac{\e^{\ii\pi\alpha} \Gamma(\lambda+2\alpha) (2z)^{-\lambda-2\alpha}
 }{\Gamma(\alpha)\Gamma(1+\alpha+\lambda)}
 F\big(\tfrac12\lambda+\alpha,\tfrac12+\tfrac12\lambda+\alpha;1+\alpha+\lambda;z^{-2}\big),
\end{align}
where the second identity is valid for $|z|>1$.
Comparing these definitions with \eqref{solu1}, \eqref{solu3}, 
\eqref{eq:ferrersP} and \eqref{eq:ferrersQ}, we find 
\begin{align}
 \frac{\Gamma(\alpha)\Gamma(1+\lambda)}{\sqrt{\pi}\Gamma(\lambda+2\alpha)} C^{\alpha}_{\lambda}(z) 
 &= 2^{\tfrac12-\alpha} 
 (z^2-1)^{\tfrac{1}{4}-\tfrac{\alpha}{2}} {\bf P}^{\tfrac12-\alpha}_{-\tfrac12+\alpha+\lambda}(z)
 \\ \notag &=  2^{1-2\alpha} 
 {\bf S}_{\alpha-\tfrac12,\alpha+\lambda}(z),
 \\ 
  \frac{\Gamma(\alpha)\Gamma(1+\alpha+\lambda)}{\Gamma(\lambda+2\alpha)} D^{\alpha}_{\lambda}(z) 
 &=
 \frac{2^{\tfrac12-\alpha} \e^{\ii\tfrac\pi2} (z^2-1)^{\tfrac{1}{4}-\tfrac{\alpha}{2}}}{\sqrt{\pi}\Gamma\big(\tfrac12+2\alpha+\lambda\big)}  {\bf Q}^{\alpha-\tfrac12}_{-\tfrac12+\alpha+\lambda}(z)
 \\ \notag
 &= \e^{\ii\pi\alpha} 2^{-\lambda-2\alpha} Z_{\alpha-\tfrac12,\alpha+\lambda}(z).
\end{align}

\bibliographystyle{ABMR}

\begin{thebibliography}{aaa}
\bibitem{AGHH} Albeverio, S., Gesztesy, F., H{\o}egh-Krohn, R. and Holden, H.:
\emph{Solvable Models in Quantum Mechanics}, 
AMS Chelsea Publishing, second edition, 2004, (with an appendix by P. Exner).

\bibitem{AK} Albeverio, S. and Kurasov, P.: \emph{Singular Perturbations of Differential Operators: Solvable Schrödinger-type Operators}, Cambridge Univ. Press,
Cambridge, 2000.

\bibitem{BF} Berezin, F. A. and Faddeev, L. D.: 
\emph{Remark on the Schrödinger equation with singular potential}, Dokl. Akad.
Nauk. SSSR 137 (1961), 1011–1014.

\bibitem{BG96} Bollini, C. G. and Giambiagi, J. J.: 
\emph{Dimensional regularization in configuration space},
Phys. Rev. D 53(10), 1996.

\bibitem{CDT} Cohl, H.~S., Dang, T.~H., and Dunster, T.~M.:
\emph{Fundamental Solutions and Gegenbauer Expansions
of Helmholtz Operators in Riemannian Spaces
of Constant Curvature.}
SIGMA 14 (2018), 136, 45 pages.

\bibitem{De1} Derezi\'n{}ski, J.:
\emph{Hypergeometric type functions and their symmetries.} 
Annales Henri Poincaré 2014, 15, 1569-1653.

\bibitem{De2} Derezi\'{n}ski, J.: 
  \emph{Group-theoretical origin of symmetries of hypergeometric class equations and functions}, in Complex differential and difference equations. Proceedings of the school and conference held at Będlewo, Poland, September 2-15, 2018, Filipuk, G, Lastra, A; Michalik, S; Takei, Y, \.Zołądek, H, eds; De Gruyter Proceedings in Mathematics, Berlin, 2020.
  
 \bibitem{DeGaRu} Derezi\'{n}ski, J., Ruba, B. and Ga\ss, C.
\emph{Point potentials on  Euclidean space, hyperbolic space
  and sphere in any dimension},
in preparation.


\bibitem{Durand76} Durand, L.:
    \emph{Expansion formulas and addition theorems for Gegenbauer functions}, 
J. Math. Phys. 17, 1933-1948, 1976.

\bibitem{Durand19a} Durand, L.:
\emph{Complex Asymptotics in $\lambda$ for the Gegenbauer Functions $C^\alpha_\lambda(z)$  
and $D^\alpha_\lambda(z)$ with $z\in(-1,1)$ }, 
Symmetry 2019, 11, 1465.

\bibitem{Durand19b} Durand, L.: 
\emph{Asymptotic Bessel-function expansions for Legendre and Jacobi functions}, 
J. Math. Phys. 60, 013501, 2019.

\bibitem{Gelfand64}
Gelfand, I. M. and Shilov, G. E.: 
\emph{Generalized functions I: Properties and operations}
Academic Press, New York-London, 1964.

\bibitem{GR}
  Gradshteyn, I.S., Ryzhik, I.M.: 
  \emph{Table of integrals, series, and products}, 
  translated by Scripta Technica, Inc. (7 ed), 
  Academic Press, 2007.  

\bibitem{Hadamard23}
Hadamard, J.: 
\emph{Lectures on Cauchy's problem in linear partial differential equations}, 
Dover Phoenix editions, Dover Publications, New York, 1923.

\bibitem{Hadamard32}
Hadamard, J.:
\emph{Le problème de Cauchy et les équations aux dérivées partielles linéaires hyperboliques} 
(in French), Paris, Hermann \& Cie, 1932.   
 
\bibitem{Hoermander90}
Hörmander., L.:
\newblock {\em {The Analysis of Linear Partial Differential Operators I}}.
\newblock Springer, Berlin, 2nd edition, 1990.

\bibitem{Lesch97}
Lesch, M.:
\newblock {\em {Differential operators of Fuchs type, conical singularities, and asymptotic methods }}.
\newblock Vieweg+Teubner Verlag, 1997. 

\bibitem{LSSY}
  Lieb, E., Seiringer, R., Solovej, J.P., Yngvason, J.: 
  \emph{The Mathematics of the Bose Gas and its Condensation}, 
  Birkh\"auser Verlag, Basel, Boston, Berlin 2005.
  
\bibitem{NIST}
  Olver, F.~W.~J., {Olde Daalhuis},  A.~B., Lozier, D.~W., Schneider, B.~I., 
  Boisvert, R.~F.,  Clark, C.~W.,  Miller,  B.~R.,   Saunders, B.~V.,
   Cohl, H.~S., and  McClain, M.~A. (Editors): 
  \emph{NIST Digital Library of Mathematical Functions},
  Release 1.1.6 of 2022-06-30,
  available at \texttt{http://dlmf.nist.gov/}.
  
\bibitem{Olver}  Olver, F.W.J.: 
\emph{Asymptotics and special functions}, 
Academic Press, New York, 1974.

\bibitem{Laforgia} Laforgia, A.: \emph{Bounds for modified Bessel functions},
J. Comput. Appl. Math. 34, 263-267, 1991.

\bibitem{Paycha} Paycha, S.: \emph{Regularised Integrals, Sums and Traces: An Analytic Point of View}, University Lecture Series, volume 59, 2012.

\bibitem{RSII} Reed, M. and Simon, B.: 
 \emph{Methods of Modern Mathematical Physics, II. Fourier Analysis, Self-Adjointness},
 Academic Press, London, 1975.
 
\bibitem{Riesz} Riesz, M.:  \emph{L'intégrale de Riemann-Liouville et le problème de Cauchy}, 
	Acta Mathematica, 81:1–223, 1949.
	
\bibitem{Szmytkowski07}
Szmytkowski R.: \emph{Closed forms of the Green's function and the generalized Green's function for the Helmholtz operator on the N-dimensional unit sphere}, 
J. Phys. A: Math. Theor. 40, 995-1009, 2007. 
	
\bibitem{WW} Whittaker, E. T., and Watson, G. N.: 
\emph{ A Course of Modern Analysis: An Introduction to the General Theory of
Infinite Processes and of Analytic Functions; with an Account of the Principal Transcendental Functions},
Cambridge Mathematical Library Series, 1996. (First edn. Cambridge University Press, Cambridge, 1902).
      \end{thebibliography}

\end{document}